\def\arXiv#1{arXiv:\href{http://arXiv.org/abs/#1}{#1}}
\def\MR#1{MR\href{http://www.ams.org/mathscinet-getitem?mr=#1}{#1}}
\def\spine{1.1in}
\def\bs#1{\boldsymbol{#1}}
\def\C{\mathbb C}
\def\CP{\mathbb {CP}}
\def\dim{\operatorname{dim}}
\def\F{\mathbb F}
\def\FP{\mathbb {FP}}
\def\N{\mathbb N}
\def\O{\mathbb O}
\def\OP{\mathbb {OP}}
\def\P{\mathcal P}
\def\R{\mathbb R}
\def\RP{\mathbb {RP}}
\def\S{\mathbb S}
\def\supp{\operatorname{supp}}
\def\theta{\vartheta}
\numberwithin{equation}{section}
\newtheorem{theorem}{Theorem}[section]
\newtheorem{lemma}[theorem]{Lemma}
\newtheorem{proposition}[theorem]{Proposition}
\newtheorem{corollary}[theorem]{Corollary}
\newtheorem{definition}[theorem]{Definition}
\theoremstyle{definition}
\newtheorem{example}[theorem]{Example}
\newtheorem{conjecture}[theorem]{Conjecture}
\title[Optimal measures for $p$-frame energies on spheres]{Optimal measures for $\bs p$-frame energies on spheres}
\date{\today}
\author{Dmitriy Bilyk}
\author{Alexey Glazyrin}
\author{Ryan Matzke}
\author{Josiah Park}
\author{Oleksandr Vlasiuk}
\address{School of Mathematics, University of Minnesota, Minneapolis, MN 55455} 
\email{dbilyk@math.umn.edu}
\address{School of Mathematical \& Statistical Sciences, The University of Texas Rio Grande Valley, Brownsville, TX 78520}
\email{alexey.glazyrin@utrgv.edu}
\address{School of Mathematics, University of Minnesota, Minneapolis, MN 55455} 
\email{matzk053@umn.edu}
\address{School of Mathematics, Georgia Institute of Technology, Atlanta, GA 30332}
\email{j.park@gatech.edu}
\address{Department of Mathematics, Florida State University, Tallahassee, FL 32306}
\email{ovlasiuk@fsu.edu}
\subjclass[2000]{Primary 52A40, 52C17; Secondary 41A05}
\keywords{Potential energy minimization, spherical codes, spherical designs}
\begin{document} 

\begin{abstract}
  We provide new answers about the placement of  mass on spheres so as to minimize energies   of pairwise interactions. We find optimal measures for the $p$-frame energies, i.e. energies with the kernel given by the absolute value of the inner product raised to a positive power $p$.  Application of linear programming methods in the setting of projective spaces allows for describing the minimizing measures in full in several cases:
  we show optimality of tight designs and of the $600$-cell for several ranges of $p$ in different dimensions. Our methods apply to a much broader class of potential functions, those which are absolutely monotonic up to a particular order  as functions of the cosine of the geodesic distance.
 In addition, a preliminary numerical study is presented which suggests optimality of several other highly symmetric configurations and weighted designs in low dimensions. In one case we improve the best known lower bounds on a minimal sized weighted design in $\CP^4$.  All these results point to the discreteness of minimizing measures for the $p$-frame energy with $p$ not an even integer.  
\end{abstract} 

\maketitle

\section{Introduction} 

An intriguing natural phenomenon is the ubiquitous appearance of certain symmetric structures and configurations as solutions to optimization problems. In a number of spaces, highly symmetric configurations of points such as the vertices of the icosahedron on $\S^2$ or the minimal vectors of the Leech lattice $\Lambda_{24}$ on $\S^{23}$ are optimal codes \cite{levenshteinDesigns1992}.  First papers on spherical designs made important connections between symmetry and optimality through pioneering work on linear programming bounds \cite{delsarteSpherical1977}. Some highly symmetric configurations, in addition to being spherical designs and optimal codes,  are also minimizers of harmonic energies \cite{Andreev1999, kolushovExtremal1997, kolushov1994, yudinMin1992, yudinLow1997}.

 For a finite configuration of points on the sphere $\mathcal{C}\subset\S^{d-1}$ (also known as a \textit{code}) the discrete  $f$-potential energies are defined as
\begin{equation}\label{eq:poten}
   E_f (\mathcal C) = \frac{1}{|\mathcal C|^2}  \sum\limits_{x, y\in\mathcal{C}} f( \langle x, y \rangle).
\end{equation} 
(The  diagonal terms should be excluded if the kernel $f$ is singular  at $1$, i.e. when $x=y$.) {\it{Universally optimal}} point configurations,  i.e. collections of points $\mathcal C$ minimizing the discrete energies $E_f$ among all point sets of fixed cardinality $|\mathcal C|$,  for all absolutely  monotonic functions $f$ on $[-1,1)$, have been discovered through the linear programming approach of Cohn and Kumar in \cite{cohnUni2007}.

In contrast to the above setting, in the present  paper, rather than considering configurations of fixed cardinality, we focus on the problem of minimizing energies over {\it{all Borel  probability measures}}, discovering that surprisingly in many situations the minimizing measures are discrete.  For a kernel function $f \in  C[-1,1]$ and a Borel  measure $\mu$ on $\S^{d-1}$, we define the energy integral as
\begin{equation}\label{eq:muener}
    I_{f}(\mu)=\int_{\S^{d-1}}\int_{\S^{d-1}} f (\langle x,y \rangle )  d\mu(x)d\mu(y).
\end{equation} 

One is naturally interested in minimizing these energies over $\mu \in \mathcal P (\S^{d-1})$, the set of all Borel probability measures on $\S^{d-1}$, i.e. finding the equilibrium distribution of unit mass under the interaction given by the potential function $f$. This definition is compatible with the discrete energy \eqref{eq:poten} in the sense that 
\begin{equation}\label{eq:trans}
E_f (\mathcal C) =  I_f \Big( \frac{1}{|\mathcal C|}  \sum_{x\in \mathcal C } \delta_x \Big),
\end{equation}
and we shall repeatedly abuse the notation when saying that a configuration $\mathcal C$ minimizes the energy $I_f (\mu)$, to mean that the corresponding measure in the right hand side of the above equation minimizes. 

While many classical examples, such as the Riesz energy,  feature increasing  kernels $f$ which give rise to energies with repulsive interactions (i.e. $f$ is largest when $x=y$ and smallest when $x$ and $y$ are antipodal), we will concentrate on the {\it attractive-repulsive} potentials, which decrease at first,  but increase eventually,   as functions of geodesic distance: in other words, a pair of points will repel when close together, but attract when far apart. Such potentials in $\R^d$  appear naturally for self-assembly models in computational chemistry, emerging collective behavior in population biology,  and  in many other scientific models \cite{wu2015nonlocal,balague2013nonlocal,carrillo2017geometry,von2012predicting,kolokolnikov2011stability,carrillo2003kinetic,mogilner2003mutual}.

We will mostly  consider attractive-repulsive potentials on the sphere  which are symmetric and {\it orthogonalizing}, so that $f(t) = f (|t|)$, $f (t)$ is increasing for $t\in [0,1]$, and  $f$ takes its minimal value at zero. For such potentials, the discrete energy for up to $d$ particles is minimized by collections of orthogonal vectors. Since in this setting the energy does not change by replacing any $x$ with $\lambda x$, where $|\lambda | =1$, its analysis naturally lends itself to the projective space $\mathbb R \mathbb P^{d-1}$, where the potential becomes repulsive, and we adopt this approach in the technical parts of the paper. 

The main examples of the above potentials, which motivate the current paper, are of the form $f(t) = |t|^p$, $p>0$,  which yield 
the {\it $p$-frame energies}: 
\begin{equation}\label{eq:pframe}
    I_{f}(\mu)=\int_{\S_{\F}^{d-1}}\int_{\S_{\F}^{d-1}} |\langle x,y \rangle|^{p} d\mu(x)d\mu(y), 
\end{equation} 
 where $\S_{\F}^{d-1}=\{x\in\F^d\ |\ \|x\|=1\}.$ For $\F=\R$ or $\C$ this type of  energy has a rich history. 
 
 When $p=2$ and $\F=\R$, the discrete version of this energy, known simply as the {\em{frame energy}} or {\em{frame potential}}, has been introduced by Benedetto and Fickus \cite{benedetto2003finite}:  they showed that global (as well as local) minimizers of this energy are precisely unit norm {\it tight frames}. These configurations, which explain the nomenclature  ``frame energy'',  play an important role in signal processing and other branches of applied mathematics and  behave like overcomplete orthonormal bases. A finite collection of vectors $\mathcal C\subset \F^{d}$ is a tight frame, if for any $x \in \F^d$, and some constant $A>0$, one has an analog of Parseval's identity holding for $\mathcal C$,
 \begin{equation}
\sum_{y\in \mathcal C }\  | \langle x,y \rangle|^2\ =A\| x\|^2. 
 \end{equation} These objects also minimize the continuous energy $I_f (\mu)$ for $p=2$, but there are also other minimizers, such as the surface area, or Haar measure $\sigma$ on $\S_{\F}^{d-1}$, and, more generally, {\it isotropic probability measures} on the sphere, i.e. those measures for which $$\displaystyle{\int_{\S_{\F}^{d-1}} |\langle x,y \rangle|^{2} d\mu (y)  = \frac1{d}}, $$ holds for all $x\in {\S_{\F}^{d-1}}$.

 When $p=4$, this energy plays an important role in connection to complex maximal {\it equiangular tight frames}, also known as {\it symmetric, informationally complete, positive operator-valued measures} (SIC-POVMs), i.e. unit norm tight frames in $\C^d$ which satisfy $| \langle x,y \rangle | = \operatorname{const}$ for $x\neq y \in \mathcal C$ and $|\mathcal C| = d^2$ \cite{RBSC}. The existence of these objects is the subject of {\em Zauner's conjecture}, and much of the numerical evidence for this conjecture comes from the observation that they minimize the $4$-frame energy among other energies, as projective $2$-designs, see e.g. \cite{SG} (in what follows, we demonstrate that they also minimize the $p$-frame energy for $2\leq p \leq 4$). 

 More generally, for even integers $p$, these energies were considered in earlier works \cite{sidel1974new,Welch1974,venkovRes2001}, and it is known that for $\F=\R$ or $\C$ projective $k$-designs are precisely the finite configurations which minimize the $p=2k$ energy. In this terminology, unit norm tight frames are equivalent to projective $1$-designs (see  Section \ref{s:des} for precise definitions), while spherical $2$-designs are exactly those  unit norm tight frames, whose center of mass is at the origin. These were constructively shown to exist for $d\geq 2$ precisely when the number of points $N$ satisfies $N\geq d+1$ and $N\neq d+2$ when $d$ is odd \cite{mimura}. The last restriction does not apply to unit norm tight frames, and these exist for all $N\geq d$ \cite{benedetto2003finite}. Surface measure is also known to be a minimizer for $p \in 2\mathbb N$:  this can be seen either from the definition of $k$-designs, or from the fact that the function $f$ is positive definite in this case (see Proposition \ref{prop:pdmin}), and was originally  proved in the real case in \cite{sidel1974new}.

For $p$ not an even integer, optimal distributions of mass for $p$-frame energies are much less studied, to the point of there only being one result on these minimizing measures readily found in the literature. It states  that distributing mass equally on the orthoplex or cross-polytope, an orthonormal basis and its antipodes, gives the unique symmetric minimizer, up to orthogonal transformations, for any energy with $p\in(0,2)$ \cite{Ehler2012}.

This result (contained in our Theorem \ref{t:mainsphere} below as a special case) points to an interesting distinction. When $p$ is even, the $p$-frame energy has a multitude of both continuous, e.g. $\sigma$, and discrete minimizers. However, this is not the case when $p$ is not an even integer: $\sigma$ is no longer a minimizer, since the function $f(t) = |t|^p$ is not positive definite, and so the above result, along with our numerical studies, points to existence of discrete minimizers only.  

In this paper we give a first description of minimizers for several dimensions and some ranges of $p$.  The description relies on the notion of {\it{tight designs}}: designs of high strength, but with few distinct pairwise distances, see Definition \ref{d:tight}. We show that if  there exists a tight projective $M$-design (which in the real case  is equivalent to a tight spherical $(2M+1)$-design), then it minimizes the $p$-frame energy for $p\in(2M-2,2M)$. The $600$-cell, despite  not being a tight design, minimizes the $p$-frame energy for $p\in(8,10)$ among probability measures on $\S^3$, as we show in Section \ref{sec:600cell}.

\begin{theorem}\label{t:mainsphere}
Let $f(t) = |t|^p$, $t\in [-1,1]$. 
\begin{enumerate}[(i)]
\item\label{t1} If there exists a tight spherical $(2M+1)$-design $\mathcal{C} \subset \mathbb{S}^{d-1}$, then    the measure
$$ \mu = \frac{1}{| \mathcal{C}|} \sum_{x \in \mathcal{C}} \delta_{x}$$
is a minimizer of the $p$-frame energy $I_{f}$ with $2M-2 \leq p \leq 2M$   over $\mu \in \mathcal P (\S^{d-1})$.
\item\label{t2} Let $\F = \R$, $\C$ or $\mathbb{H}$.  Assume that there exists a tight projective $M$-design $\widetilde{\mathcal{C}} \subset {\F\mathbb{P}}^{d-1}$, and let the  code $\mathcal C \subset \mathbb{S}_{\F}^{d-1}$ consist of the representers of $\widetilde{\mathcal C}$ in $\mathbb{S}_{\F}^{d-1}$ according to \eqref{eq:linemodel}. Then    the measure
$$ \mu = \frac{1}{| \mathcal{C}|} \sum_{x \in \mathcal{C}} \delta_{x}$$
is a minimizer of the $p$-frame energy $I_{f}$  
 with $2M-2 \leq p \leq 2M$  over $\mu \in \mathcal P (\S^{d-1}_\F)$.
\item\label{t3} Let $\mathcal C \subset \S^{3}$ denote the  $600$-cell. Then the measure $$ \mu = \frac{1}{| \mathcal{C}|} \sum_{x \in \mathcal{C}} \delta_{x}$$
is a minimizer of the $p$-frame energy $I_{f}$ with $8 \leq p \leq 10$  over $\mu \in \mathcal P (\S^{3})$.
\end{enumerate}
\end{theorem}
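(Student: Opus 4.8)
The plan is to run the linear programming (Delsarte--Yudin) method for energy minimization, transferred to the relevant projective space, where the substitution $\tau=|\langle x,y\rangle|^2$ turns the kernel $|t|^p$ into $g(\tau)=\tau^{p/2}$ --- a genuinely repulsive function which, when $p/2\in(t-1,t)$, is absolutely monotonic up to order $t$: its first $t$ derivatives on $(0,1)$ are positive while $g^{(t+1)}$ is negative there. Recall the bound: if $h$ is a polynomial with $h(s)\le|s|^p$ for all $s\in[-1,1]$ whose expansion $h=\sum_k\widehat h_k C_k$ in the zonal orthogonal polynomials of $\S^{d-1}_\F$ (Gegenbauer polynomials in the real spherical case, the appropriate Jacobi polynomials in the projective cases) has $\widehat h_k\ge 0$ for every $k\ge 1$, then positive-definiteness of the $C_k$ gives, for all $\mu\in\P(\S^{d-1}_\F)$,
\begin{equation*}
 I_f(\mu)\ \ge\ I_h(\mu)\ =\ \widehat h_0+\sum_{k\ge1}\widehat h_k\iint C_k(\langle x,y\rangle)\,d\mu(x)\,d\mu(y)\ \ge\ \widehat h_0 .
\end{equation*}
Both inequalities become equalities for $\mu=|\mathcal C|^{-1}\sum_{x\in\mathcal C}\delta_x$ once (a) $h$ agrees with $|s|^p$ at every inner product occurring in $\mathcal C$, so that $\iint(|s|^p-h)\,d\mu\,d\mu=0$; and (b) $\mathcal C$ has design strength at least $\deg h$, so that $\iint C_k\,d\mu\,d\mu=0$ whenever $\widehat h_k>0$. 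Everything thus reduces to exhibiting such an $h$ with $\widehat h_0$ equal to the energy of the candidate.

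For parts \eqref{t1} and \eqref{t2} I would take $h$ to be the Hermite interpolant of $|s|^p$ --- equivalently, of $g(\tau)$ in the variable $\tau=s^2$ --- with a double node at each nonzero, non-extreme squared inner product of $\mathcal C$ (so that $h$ touches $|s|^p$ tangentially from below), together with a simple node at $\tau=1$ and, if $0$ is an inner product of $\mathcal C$, also at $\tau=0$. Since the nonzero squared inner products of a tight design are precisely the zeros of a Jacobi polynomial of the relevant family (Definition~\ref{d:tight}), this prescription pins down an $h$ of degree exactly $t$ in $\tau$; and hypothesis (b) holds because a tight spherical $(2t+1)$-design, respectively a tight projective $t$-design realized via \eqref{eq:linemodel}, annihilates every zonal harmonic up to the degree of $h$. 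It then remains to verify (a$'$) that $h(s)\le|s|^p$ on all of $[-1,1]$, and (b$'$) that $\widehat h_k\ge 0$ for $1\le k\le t$.

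Step (a$'$) should fall out of the Hermite remainder formula: $g(\tau)-H(\tau)$ equals a value of $g^{(t+1)}/(t+1)!$ times $(\tau-1)\prod_i(\tau-\tau_i)^2$ (with an extra factor $\tau$ when $0$ is a distance), and since $g^{(t+1)}<0$ on $(0,1)$ --- which is exactly where the range $p/2\in(t-1,t)$ enters --- while the polynomial factor is $\le 0$ on $[0,1]$, the difference is $\ge 0$ there. Step (b$'$) is the crux and the place where the hypothesis \emph{tight} is indispensable: because the interpolation nodes are the zeros of an orthogonal polynomial, the associated Gauss-type quadrature rule has \emph{positive} weights, and expressing the coefficients $\widehat h_k$ of the interpolant through this quadrature --- together with the absolute monotonicity of $g$ up to order $t$, which fixes the sign of the divided differences of $g$ that appear --- yields $\widehat h_k\ge 0$. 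I expect this reconciliation of tight-design combinatorics with the positivity of quadrature weights and of high-order derivatives of $\tau^{p/2}$ to be the main technical obstacle.

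Part \eqref{t3} is a non-tight instance of the same scheme and is the most delicate. The $600$-cell has four distinct nonzero squared inner products but is only an $11$-design on $\S^3$ (a projective $5$-design), so its distances are not the zeros of a single Jacobi polynomial and the rigid Hermite construction above is unavailable. One must instead use the precise (golden-ratio) values of the $600$-cell's inner products, together with its full distance distribution, to locate --- by a numerical search --- an admissible certificate $h$ of the degree permitted by the design strength, and then certify rigorously the finitely many polynomial inequalities in $p$ encoding ``$h\le|s|^p$ on $[-1,1]$'' and ``$\widehat h_k\ge 0$''; this is carried out in Section~\ref{sec:600cell}. The endpoints $p=8$ and $p=10$ are even integers, where optimality already follows from positive-definiteness of $|t|^p$ (Proposition~\ref{prop:pdmin}), so only $p\in(8,10)$ needs the new certificate, and building it --- and controlling it uniformly across the interval --- is the principal difficulty of \eqref{t3}.
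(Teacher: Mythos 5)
For parts \eqref{t1} and \eqref{t2} your plan is essentially the paper's proof: pass to the projective space, Hermite-interpolate the (degree-$t$ absolutely monotonic) kernel with double nodes at the interior distances and simple nodes at the extreme ones, get $h\le f$ from the remainder formula, and extract positive definiteness of the interpolant from the fact that the distance set of a tight design is the zero set of $Q_{m+1}+\gamma Q_m$ (even strength) or of $(1-t^2)Q^{1,1}_{m-1}$ (odd strength). The paper realizes the positivity step you call ``the crux'' via Newton's formula for the Hermite interpolant combined with the Cohn--Kumar subproduct lemma (Proposition~\ref{prop:subproducts}) and the strengthened Krein condition (Lemma~\ref{lem:Krein}); this is the precise form of the quadrature-positivity mechanism you gesture at, so no substantive difference there.

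Part \eqref{t3}, however, has a genuine gap as you describe it. You propose to search for a certificate ``of the degree permitted by the design strength,'' but the $600$-cell is only a projective $5$-design, and a degree-$5$ polynomial cannot meet the interpolation conditions (matching $f$ at five values of $\tau(x,y)$ with tangency at the three interior ones is eight conditions), while the natural Hermite interpolant has degree $7$ or $8$ --- for which design strength $5$ does \emph{not} give $I_h(\sigma)=I_h(\mu_{\mathcal C})$. The missing idea is the special fact that the averages over the $600$-cell of the harmonics of degrees $7$, $8$, and $9$ vanish even though the degree-$6$ average does not; one therefore builds the certificate in the span of $\{C_n:0\le n\le 8,\ n\ne 6\}$, forcing $\widehat h_6=0$, and then the equality $I_h(\sigma)=I_h(\mu_{\mathcal C})$ holds for every $n$ actually present in $h$. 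Without this input there is no admissible degree for your certificate, and the interval-arithmetic verification of $\widehat h_n(p)\ge 0$ for $p\in[8,10]$ has nothing to certify. Your endpoint remark is fine: at $p=8,10$ the kernel is positive definite and the $600$-cell, being a projective $5$-design, attains the minimum there.
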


 For  parts \eqref{t1}-\eqref{t2} of the above theorem we also prove a   uniqueness statement: more precisely, whenever the corresponding statements hold, and additionally $p$ is not an endpoint of the interval, i.e. $p\in (2M-2,2M)$,  {\em all} minimizers have to be tight designs (although not necessarily coinciding with $\mathcal C$), in particular, they have to be discrete. Since tight $(2M+1)$-designs on the circle consist just of $2(M+1)$ equally spaced points, the above result fully characterizes the minimizers for $d=2$ (for both the sphere and real projective space). See Section \ref{sec:uniq} for more details. 

We observe that part  \eqref{t1} is essentially contained in   part \eqref{t2} with $\F = \R$: indeed, odd-strength tight spherical designs are necessarily symmetric, and by taking one point in each antipodal pair one obtains a tight projective design (see Sections \ref{s:des}--\ref{subsec:antip} for a more extensive discussion).

Minimizing  the continuous energy \eqref{eq:pframe} over all {\em{measures}} and obtaining discrete minimizers allows us to make new conclusions about the minimizing configurations of the discrete energies \eqref{eq:poten} for certain values of the cardinality $N$.  One directly obtains the following corollary:
\begin{corollary} Let $\F$, $d$, $p$, and $\mathcal C$ be as in any of the parts of Theorem \ref{t:mainsphere}, and let $N = k |\mathcal C |$, $k \in \mathbb N$. Then  $N$-point discrete $p$-frame energy is minimized by the configuration $\mathcal C$ repeated $k$ times, i.e.  
\begin{equation}
\min_{\substack{\mathcal C' \subset \S^{d-1}_\F \\  |\mathcal C'| = N }} \frac{1}{N^2}  \sum\limits_{x, y\in\mathcal{C'}}  \big|  \langle x, y \rangle \big|^p  = I_{|t|^p}  \Big( \frac{1}{|\mathcal C|}  \sum_{x\in \mathcal C } \delta_x \Big). 
\end{equation}
\end{corollary}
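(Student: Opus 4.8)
The plan is to obtain the corollary as an immediate consequence of Theorem~\ref{t:mainsphere} via the dictionary~\eqref{eq:trans} between finite configurations and their empirical measures. First I would observe that since $f(t)=|t|^p$ with $p>0$ is continuous on all of $[-1,1]$, it has no singularity at $t=1$, so no diagonal terms are excluded and~\eqref{eq:trans} holds verbatim: for every $N$-point configuration (multiset) $\mathcal C'\subset\S^{d-1}_\F$ one has $E_f(\mathcal C')=I_f(\nu)$, where $\nu=\frac1N\sum_{x\in\mathcal C'}\delta_x\in\mathcal P(\S^{d-1}_\F)$ is a Borel probability measure.

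The second step is to invoke the relevant part of Theorem~\ref{t:mainsphere}: for $\F$, $d$, $p$, $\mathcal C$ as in that theorem, the measure $\mu=\frac1{|\mathcal C|}\sum_{x\in\mathcal C}\delta_x$ minimizes $I_f$ over all of $\mathcal P(\S^{d-1}_\F)$. In particular $I_f(\nu)\ge I_f(\mu)$ for the empirical measure $\nu$ of any $\mathcal C'$ with $|\mathcal C'|=N$, and taking the infimum over such $\mathcal C'$ gives $\min_{|\mathcal C'|=N}E_f(\mathcal C')\ge I_f(\mu)$. This works uniformly for all three parts of the theorem, since the only input used is that $\mu$ is a global minimizer over probability measures.

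The third step is to check that this lower bound is attained. Let $\mathcal C^{(k)}$ denote $\mathcal C$ with every point repeated $k$ times, a multiset of cardinality $N=k|\mathcal C|$; its empirical measure is $\frac1{k|\mathcal C|}\cdot k\sum_{x\in\mathcal C}\delta_x=\mu$, so $E_f(\mathcal C^{(k)})=I_f(\mu)$ by~\eqref{eq:trans}, and the asserted equality follows.

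There is no real obstacle here; the entire content sits in Theorem~\ref{t:mainsphere}, and the only point that warrants a remark is the meaning of an ``$N$-point configuration.'' Because repetitions occur (the empirical measure of $\mathcal C^{(k)}$ has atoms of mass $k/N$ at $|\mathcal C|$ distinct points), the minimum should be read over multisets of size $N$, equivalently over probability measures supported on at most $N$ points with all atomic masses in $\frac1N\mathbb N$. If one instead insists on $N$ genuinely distinct points, the value $I_f(\mu)$ remains the infimum, approached by perturbing $\mathcal C^{(k)}$ and using continuity of $f$, though it need not then be attained.
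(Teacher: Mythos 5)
Your proposal is correct and is exactly the argument the paper intends: the corollary is stated as following ``directly'' from Theorem~\ref{t:mainsphere} via the identification~\eqref{eq:trans}, with the lower bound coming from minimality of $\mu$ over all probability measures and attainment coming from the $k$-fold repetition having empirical measure equal to $\mu$. Your closing remark about reading the minimum over multisets is a sensible clarification but does not change the substance.
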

\noindent Thus, for example, if $N$ is a multiple of $6$, then repeated copies of a ``half'' of the icosahedron minimize the $N$-point  $p$-frame energy on $\S^2$ for $p \in [2,4]$. Some other results about the minima of discrete $p$-frame energies have been obtained  in  \cite{CGGKO}. \\

The arguments proving   Theorem \ref{t:mainsphere} are strongly reminiscent of those appearing in \cite{cohnUni2007} and are based on the linear programming method which goes back to  Delsarte and Yudin \cite{delsarte1973algebraic,yudinMin1992}. Part \eqref{t2}  of Theorem~\ref{t:mainsphere} is   a consequence of a much more general Theorem \ref{thm:tight}. The latter theorem, in fact, demonstrates that tight $M$-designs possess a certain {\em{universality}} property: they minimize the energy {\em{for all}} strictly monotonic functions of degree exactly $M$ over {\em{all probability measures}}, see Section~\ref{sec:tight} for details.

The proof of optimality for the $600$-cell is computer assisted 
and makes use of the fact that the averages of spherical harmonics over the $600$-cell vanish for a few orders above its maximal strength as a spherical design -- the same idea was used in the proof of universal optimality of the $600$-cell in \cite{cohnUni2007}, as well as earlier in \cite{Andreev1999, Andreev2000}. This allows us to construct a collection of interpolating polynomials $h$ for each $p$ which have the desired properties of lying below $f$, agreeing with $f$ on the distances appearing in $\mathcal{C}$, and finally being positive definite, the last of which is checked using interval arithmetic. The details of the proof are taken up in Section~\ref{sec:600cell}.

We collect all the  necessary preliminary material in  Section \ref{sec:prelim}: Section~\ref{subsec:2point} contains the discussion of relevant properties of compact 2-point homogeneous connected spaces; Section~\ref{sec:energies} explains the specifics of minimizing energy functionals over probability measures on such spaces; Section~\ref{s:des} introduces designs, and, in particular, tight designs; and Section~\ref{subsec:antip} describes the transference between energies on projective spaces and spheres, which connects Theorem \ref{thm:tight} to Theorem \ref{t:mainsphere}.

Extensive numerical experiments were conducted in the course of our investigations. The results of these experiments are collected in Table~\ref{table:real} for the real case and Table~\ref{table:complex} for the complex case.  Unlike the case of  tight designs,  optimal weights for these configurations are generally not equal  and thus must be computed for each relevant  value of  $p$. Each table gives the minimal support size of a conjectured optimal point set: when a  configuration on the sphere is origin-symmetric, this minimal support size equals half of the size of the named configuration. For example, the icosahedron has twelve vertices, however $6$ vertices on one hemisphere suffice to give a minimizer of the $3$-frame energy on $\S^2$. We give additional details in Section~\ref{sec:ea1} for these conjectured minimizers of the $ p $-frame energies. Notably several of these configurations are not universally optimal, and further, several universally optimal configurations are nowhere to be found in this table. We discuss common features of minimizers in Section~\ref{sec:remarks}. More details on symmetry of measures  and relations between spheres and projective spaces may be found in Section~\ref{subsec:antip}. \\

Our experimental results together with Theorem \ref{t:mainsphere} lead us to believe that clustering of minimizers is a general phenomenon when $p$ is not an even integer.
\begin{conjecture}\label{conj:discrete}
 In all dimensions $d\ge 2$ and for all $p>0$ such that $p\not\in 2 \mathbb N$, the minimizing measures of the $p$-frame energy \eqref{eq:pframe} are discrete. 
 \end{conjecture}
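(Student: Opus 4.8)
The plan is to pass to the projective quotient, invoke the Delsarte--Yudin linear programming duality, and conclude with the absolute bound for $s$-distance sets. Because $|\langle x,y\rangle|^p$ depends only on the lines through $x$ and $y$, the energy $I_{|t|^p}(\mu)$ of $\mu\in\P(\S^{d-1}_\F)$ equals $I_F(\bar\mu)$, where $\bar\mu\in\P(\FP^{d-1})$ is the pushforward of $\mu$ under the quotient map and $F([x],[y])=|\langle x,y\rangle|^p$; hence $\min I_{|t|^p}=\min I_F$ and $\mu$ minimizes if and only if $\bar\mu$ does, and if $\bar\mu$ is finitely supported then $\mu$ is discrete in the real case, while for $\F=\C,\mathbb H$ the natural statement is finiteness of $\supp\bar\mu$ on $\FP^{d-1}$ directly, since every measure on the sphere projecting to a given minimizer $\bar\mu$ is again a minimizer of $I_{|t|^p}$. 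So it suffices to prove that every minimizer of $I_F$ on $\FP^{d-1}$ has finite support. After the standard substitution $F$ becomes $v\mapsto v^{p/2}$ up to affine reparametrization; since $p/2\in(t-1,t)$ for $t:=\lceil p/2\rceil$, all derivatives of this function through order $t$ are positive on $(0,\infty)$ while the $(t+1)$-st is negative, so $F$ is a strictly monotonic function of degree exactly $t$ in the sense of Section~\ref{sec:tight}, and, as observed in the Introduction, $F$ is not positive definite: $\widehat F_k<0$ for some $k\ge1$ in its Jacobi expansion. As $t$ ranges over $\mathbb N$ the intervals $(2t-2,2t)$ exhaust $(0,\infty)\setminus 2\mathbb N$, so the conjecture would follow from a version of Theorem~\ref{thm:tight} in which the tight-design hypothesis is weakened: minimizers, though perhaps not tight designs, are still finitely supported.

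Next, let $\bar\mu$ be a minimizer and consider the dual program $\sup\{\widehat h_0 : h\le F\text{ on }[-1,1],\ \widehat h_k\ge 0\text{ for all }k\ge1\}$, whose value equals $\min I_F$ by strong linear programming duality on compact two-point-homogeneous spaces. Assume it is attained by an admissible $h^*$. Then complementary slackness, just as in the proof of Theorem~\ref{thm:tight}, forces $h^*=F$ on $\supp\bar\mu\times\supp\bar\mu$, so every inner product realized on $\supp\bar\mu$ belongs to the contact set $Z=\{t:h^*(t)=F(t)\}$, and moreover $\widehat{\bar\mu}_k=0$ whenever $\widehat{h^*}_k>0$. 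The crux is the claim that $h^*$ may be taken to be a \emph{polynomial} of degree at most $t$ in the projective variable---equivalently, that $\bar\mu$ is a weighted projective $t$-design and $h^*$ is the Hermite interpolant of $F$ at the finitely many distances $\bar\mu$ realizes, which is exactly the form of the certificate used in \cite{cohnUni2007} and in the proof of Theorem~\ref{thm:tight}. Granting this, the function $|t|^p-h^*(t)$ is nonnegative on $[-1,1]$, real-analytic on $(0,1]$ and on $[-1,0)$, and not identically zero (since $|t|^p$ is not a polynomial); it therefore has only finitely many zeros in $[-1,1]$, so $Z$ is finite and $\supp\bar\mu$ is an $s$-distance set in $\FP^{d-1}$ with $s=|Z|-1$. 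The Delsarte--Goethals--Seidel absolute bound for $s$-distance sets then makes $\supp\bar\mu$ finite, completing the argument.

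The main obstacle is precisely this crux---producing a finite-degree dual certificate without a tight design. The numerics in Tables~\ref{table:real}--\ref{table:complex} suggest that minimizers are in general weighted designs with \emph{unequal} weights, hence not tight, and for such measures the Levenshtein-type interpolation nodes need not produce a positive-definite $h^*$; a structure theory for optimal weighted designs seems to be required. The natural line of attack is a bootstrap: (a) show that $\supp\bar\mu$ has empty interior, using that otherwise the potential $U^{\bar\mu}$ would be constant on an open set, and aiming to upgrade this to $U^{\bar\mu}\equiv\text{const}$ everywhere, which would exhibit $\bar\mu$ as a critical point of a quadratic form with a negative eigenvalue ($\widehat F_k<0$)---impossible; (b) strengthen (a) to the vanishing of $\widehat{\bar\mu}_k$ for all $k>t$, that is, to $\bar\mu$ being a weighted $t$-design; (c) build $h^*$ by Hermite interpolation at the now finitely many distances of $\bar\mu$ and verify positive definiteness of its finitely many relevant Jacobi coefficients via the design property. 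Step (b)---excluding, for instance, Cantor-type supports carrying infinitely many distinct distances, against which the merely finite smoothness of $U^{\bar\mu}$ (inherited from $|t|^p$) precludes a unique-continuation shortcut---is where the present methods stall, and is why the statement is still a conjecture. A possible alternative is to abandon duality and argue locally: show that any non-atomic portion of $\bar\mu$ carried by a small ball can be redistributed onto finitely many points so as to strictly decrease $I_F$, using the strict sign of the $(t+1)$-st derivative of $v\mapsto v^{p/2}$; the obstacle there is controlling the interaction of the perturbation with the rest of $\bar\mu$.
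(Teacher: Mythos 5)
The statement you are trying to prove is Conjecture~\ref{conj:discrete}; the paper offers no proof of it, and your proposal does not supply one either. What you have written is a strategy outline whose decisive step is explicitly left open --- and you say so yourself. The gap is exactly where you locate it: you need a finite-degree, positive-definite dual certificate $h^*\le F$ attaining the optimal value and touching $F$ on the distance set of $\supp\bar\mu$, or equivalently you need to know in advance that every minimizer is a weighted $t$-design realizing only finitely many distances. Without a candidate configuration (the role tight designs and the $600$-cell play in Theorems~\ref{thm:tight} and~\ref{thm:600}) there is no Hermite interpolation scheme to run, and nothing in the paper's machinery produces such a certificate for a general minimizer. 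Two further steps are asserted rather than proved: (i) strong duality \emph{with attainment} for the infinite-dimensional program $\sup\{\widehat h_0 : h\le F,\ \widehat h_k\ge 0\}$ --- weak duality (Lemma~\ref{lem:lp}\eqref{lp1}) is all the paper establishes, and the supremum need not be attained by an admissible $h^*$, let alone by a polynomial; and (ii) your bootstrap items (a)--(c), in particular upgrading ``empty interior of the support'' to ``$\widehat{\bar\mu}_k=0$ for all $k>t$,'' which is precisely the unique-continuation-type statement you admit you cannot prove. The most the present methods yield is the empty-interior result, which the authors relegate to the companion paper \cite{bilyk2019energy} and which falls well short of discreteness.

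To be clear about what is salvageable: the reductions you perform at the outset (passing to $\FP^{d-1}$, identifying $F$ as absolutely monotonic of degree exactly $t=\lceil p/2\rceil$ with negative $(t+1)$-st derivative, and the observation that a polynomial certificate of degree $\le 2t$ would force $\supp\bar\mu$ to be an $s$-distance set and hence finite by the Delsarte--Goethals--Seidel absolute bound) are all correct and consistent with the paper's framework. But a proof attempt that terminates in ``this is where the present methods stall, and is why the statement is still a conjecture'' is a correct diagnosis of the problem, not a proof of the statement.
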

 This conjecture is additionally  supported by the fact that discreteness of minimizers  is known for certain attractive-repulsive potentials on $\R^d$ \cite{carrillo2017geometry} and has been conjectured for some other potentials on the sphere, e.g.\ those appearing in \cite{finsterSupport2013}, see also Section~\ref{sec:causal}.  It is worth noting that   in the classical paper \cite{Bjorck1956}, it was shown that for $F(x,y) = - \| x-y\|^{\alpha}$ with $\alpha >2$ and any compact domain $\Omega \subset \mathbb R^d$, the energy minimizers  are discrete and their support consists of  at most $d+1$ points (just two antipodal points if $\Omega = \mathbb S^{d-1}$). Moreover,  in  \cite{carrillo2017geometry} discreteness has been established for mildly repulsive potentials, i.e. those that behave as $- \| x-y\|^{\alpha}$ with $\alpha >2$ when $\| x-y\|$ is small.  Observe that for the $p$-frame potential, we have  $|\langle x,y \rangle |^p \, \approx 1 - \frac{p}{2} \|x-y \|^2$ when $x$, $y\in \mathbb S^{d-1}$ are close, hence the $p$-frame energy falls into the endpoint case $\alpha=2$, and, according to the discussion above, this case is more subtle. 
 
While we  have yet to establish Conjecture~\ref{conj:discrete} and prove discreteness, in our companion paper \cite{bilyk2019energy} we show that on $\S^{d-1}$, whenever $p$ is not even, the support of the measure minimizing the $p$-frame potential necessarily has empty interior.

In addition to the conjectured discreteness of minimizers our initial study gave rise to surprisingly symmetric minimizers for $p$-frame energies, suggesting that further investigation might give new interesting spherical codes. While nearly all of the minimizing configurations arising from our numerical experiments  have appeared before in the coding theory literature,  we did however discover a new code in $\C^5$ of $85$ vectors which in turn gives a new bound for a minimal sized weighted projective $3$-design. We detail a construction of this code and  its properties in Section~\ref{subsec:new_quadrature}.

\begingroup
\fontsize{10pt}{12pt}\selectfont 
\begin{table}[h]
    \begin{center}
        \caption{Optimal and conjectured optimal configurations for $p$-frame energies on $\RP^{d-1}$. Energies are evaluated in most cases at the odd integer which is the midpoint of the interval given. The range $q-$ configurations are obtained as limiting configurations as $p$ tends to $q$ from below. For these configurations, the energy is evaluated for the even limit value. Among the configurations which are not tight, the $600$-cell is the only configuration which is proved to be optimal.}
        \begin{tabular}{ c  c  c  c  c  c }
\label{table:real}
            $d$ & $N$ & Energy &  Range of $p$ & Tight & Name \vspace{1 mm} \\ \hline 
    \rule{0pt}{3ex} $2$ & $N$ & $(*)$ &  $[2N-4,2N-2]$ & $t$ & regular $2N$-gon \vspace{0 mm} \\ \vspace{0 mm}
$d$ & $d$ & $1/d$ &  $[0,2]$ & $t$ & orthonormal basis \\ \vspace{0 mm}
            $3$ & $6$ & $0.241202265916660$ & $[2,4]$ & $t$
                & icosahedron \\ \vspace{0 mm}
            $3$ & $11$ & $0.142857142857143$ & $6-$ & ${ }$ & Reznick design \\ \vspace{0 mm}
            $3$ & $16$ & $0.124867143799450$ & $[6,8]$ & ${ }$ & icosahedron and dodecahedron \\ \vspace{0 mm}
            $4$ & $11$ & $0.125000000000000$ & $4-$ & ${ }$ & small weighted design \\ \vspace{0 mm}
            $4$ & $24$ & $0.096277507157493$ & $[4,6]$ & ${ }$ & $D_4$ root vectors \\ \vspace{0 mm}
            $4$ & $60$ & $0.047015486159502$ & $[8,10]$ & ${ }$ & $600$-cell \\ \vspace{0 mm}
            $5$ & $16$ & $0.118257675970387$ & $[2,4]$ & ${ }$ & hemicube \\ \vspace{0 mm}
            $5$ & $41$ & $0.061838820473855$ & $[4,6]$ & ${ }$ & Stroud design \\ \vspace{0 mm}
            $6$ & $22$ & $0.090559619406078$ & $[2,4]$ & ${ }$ & cross-polytope and hemicube \\ \vspace{0 mm}
            $6$ & $63$ & $0.042488105634495$ & $[4,6]$ & ${ }$ & $E_6$ and $E_6^{*}$ roots \\ \vspace{0 mm}
            $7$ & $28$ & $0.071428571428571$ & $[2,4]$ & $t$ & kissing $E_8$ \\ \vspace{0 mm}
            $7$ & $91$ & $0.030645893660944$ & $[4,6]$ & ${ }$ & $E_7$ and $E_7^{*}$ roots \\ \vspace{0 mm}
            $8$ & $36$ & $0.059098639455782$ & $3$ & ${ }$ & mid-edges of regular simplex \\ \vspace{0 mm}
            $8$ & $120$ & $0.022916666666667$ & $[4,6]$ & $t$ & $E_8$ roots \\ \vspace{0 mm}
            $23$ & $276$ & $0.011594202898551$ & $[2,4]$ & $t$ & equiangular lines \\ \vspace{0 mm}
            $23$ & $2300$ & $0.002028985507246$ & $[4,6]$ & $t$ & kissing Leech lattice \\ \vspace{0 mm}
            $24$ & $98280$ & $0.000103419439357$ & $[8,10]$ & $t$ & Leech lattice minimal vectors \\ \vspace{0 mm}
        \end{tabular}
    \end{center}
\end{table}
\endgroup
\begingroup
\fontsize{10pt}{12pt}\selectfont
\begin{table}[h]
    \begin{center}
        \caption{Optimal and conjectured optimal configurations for $p$-frame energies on $\CP^{d-1}$. The energies are evaluated at odd integers.}
        \begin{tabular}{ c c c c c c }
            \label{table:complex}
            $d$ &	$N$ &	Energy		&		Range of $p$	& Tight	& Name \vspace{1 mm} \\ \hline 
\rule{0pt}{3ex} $d$ & $d$ & $1/d$ &  $[0,2]$ & $t$ & orthonormal basis \vspace{0 mm} \\ \vspace{0 mm}
            $3$ &	$9$ &	$0.222222222222223$& $[2,4]$& $t$ & SIC-POVM	\\ \vspace{0 mm}
            $3$ &	$21$ &	$0.012610934678518$& $[4,6]$& ${ }$ & union equiangular lines	\\ \vspace{0 mm}
            $4$ &	$16$ &	$0.146352549156242$& $[2,4]$& $t$ & SIC-POVM	\\ \vspace{0 mm}
            $4$ &	$40$ &	$0.068301270189222$& $[4,6]$& $t$ & Eisenstein structure on $E_8$	\\ \vspace{0 mm}
            $5$ &	$25$ &	$0.105319726474218$& $[2,4]$& $t$ & SIC-POVM \\ \vspace{0 mm}
            $5$ &	$85$ &	$0.041997097378053$& $[4,6]$& ${ }$ & $O_{10}$ and $W(K_5)$ minimal vectors	\\ \vspace{0 mm}
            $6$ &	$36$ &	$0.080272843473504$& $[2,4]$& $t$ & SIC-POVM	\\ \vspace{0 mm}
            $6$ &	$126$ &	$0.027777777777778$& $[4,6]$& $t$ & Eisenstein structure on $K_{12}$	\\ \vspace{0 mm}
            $d$ &	$d^2$ &	$\frac{1+(d^2-1)(1/(d+1))^{3/2}}{d^2}$ & $[2,4]$& $t$ 	& SIC-POVM (conjectured) \\ \vspace{0 mm}
        \end{tabular}
    \end{center}
\end{table}
\endgroup

Section \ref{sec:non} extends some of our results to non-compact settings.   In Section \ref{sec:mixed_volume} we apply the results of Theorem \ref{t:mainsphere} to the  problems of minimizing mixed volumes of convex bodies, and in Section \ref{sec:causal}  we apply the methods of linear programming, similar to those employed in Theorems \ref{t:mainsphere} and \ref{thm:tight}, to the optimization of energies related to {\it causal variational principles}, see \cite{finsterSupport2013}.

We would like to point out that in many papers, the term {\it $p$-frame potential} is usually used to denote the $p$-frame energy \eqref{eq:pframe} or its discrete counterpart. We find the term ``energy'' to be more appropriate in this context and reserve the term ``potential'' for the kernel $f(t)$  of the energy $I_f$.

\section{Geometry and functions on 2-point homogeneous spaces}\label{sec:prelim}
\subsection{Two-point homogeneous spaces} 
\label{subsec:2point}

For convenience, the above discussion mostly  assumed the underlying space to be the unit sphere $ \S^{d-1} $. This will no longer be the case, as our study concerns energy minimization on a broader class of spaces.
A metric space $ (\Omega, d)  $ is said to be \textit{two-point homogeneous}, if for every two pairs of points $ x_1, x_2 $ and $ y_1, y_2 $ such that $ d(x_1, x_2) = d(y_1, y_2) $ there exists an isometry of $ \Omega $, mapping $ x_i $ to $ y_i, $ $ i=1,2 $. It is known \cite{wangTwoPoint1952} that any such compact connected space is either a real sphere $ \S^{d-1} $, a real projective space $ \RP^{d-1} $, a complex projective space $ \CP^{d-1} $, a quaternionic projective space $ \mathbb{HP}^{d-1} $, or the Cayley projective plane $ \OP^2 $. Note that it suffices to consider $\FP^{d-1}$ for $ d > 2 $ only, as $ \FP^1 $ is just $\S^{\dim_{\R} \F} $ \cite[p. 170]{baez2002oct}, and so will not be separately considered in what follows. 

Below, $\Omega$ always refers to a compact connected $2$-point homogeneous space, equipped with the geodesic distance $ \theta $, normalized to take values in $ [0, \pi] $. We let $\sigma$ denote the unique probability measure invariant under the isometries of $\Omega$.

The first three types of projective spaces $ \{ \FP^{d-1} : \F=\R,\C, \mathbb H \} $ have a simple description: they may be represented as the spaces of lines passing through the origin in $\F^d$, 
\begin{equation}
    \label{eq:linemodel}
    x\F=\{x\lambda\ |\ \lambda\in \F\setminus\{0\}\}. 
\end{equation}
Observe that the isometry groups $ O(d) $, $ U(d) $, $ Sp(d) $ of the corresponding vector spaces $ \F^d $ act transitively on each space, and that the stabilizers of a line represented by $ x \in \F^{d} $ are $ O(d-1)\times O(1) $, $ U(d-1)\times U(1) $, and $ Sp(d-1)\times Sp(1) $, respectively. Thus one has \cite[p. 28]{wolf2007harmonic} the following quotient representations:
\begin{equation*} 
    \begin{aligned}
        \RP^{d-1}         &= O(d)/O(d-1)\times O(1),\\
        \CP^{d-1}         &= U(d)/U(d-1)\times U(1),\\
        \mathbb{HP}^{d-1} &= Sp(d)/Sp(d-1)\times Sp(1),
    \end{aligned}
\end{equation*} 
where we write $ O(d) $, $ U(d) $, $ Sp(d) $ for the groups of matrices $ X $ over the respective algebra, satisfying $ XX^* = I$.

Using the identification \eqref{eq:linemodel}, one can associate each element of   $ \FP^{d-1} $ ($ \F=\R,\C, \mathbb H $) with a unit vector $x\in \F^d$, $\| x\|= 1$, and we shall often abuse notation by doing so.   This gives, in addition to the Riemannian metric $ \theta $, another metric, the {\it chordal distance} between points $ x,y \in \Omega $, defined by
\[\rho(x,y)=\sqrt{1-|\langle x,y \rangle|^2},  \]
where $\displaystyle{\langle x, y \rangle=\sum\limits_{i=1}^{d} {x}_{i}\overline y_{i} }$ is the standard inner product in $\F^d$.   
The chordal distance $\rho(x,y)$  is related to the geodesic distance  $\vartheta(x,y)$ by the equation $$\cos \vartheta(x,y)=1-2\rho(x,y)^2=2|\langle x,y \rangle|^2-1.$$ 

Since the algebra of octonions is not associative, the line model of \eqref{eq:linemodel} fails, and instead a model given by Freudenthal \cite{freudenthal} is used to describe $\OP^{d-1}$. It is known \cite{baez2002oct} that only two octonionic spaces exist: $\OP^{1}$ and $\OP^{2}$, however $\OP^{1}$ is just $\S^{8}$, as noted above. 

$\OP^2$ can be described as the subset of $3\times 3$ Hermitian matrices $\Pi$ over $\O$, satisfying $\Pi^2=\Pi$ and $\text{Tr}\ \Pi=1$ \cite{skriganovPoint2017}. A metric for $\OP^2$ is then given by the Frobenius product, 
$$\rho(\Pi_1,\Pi_2)=\frac{1}{\sqrt{2}}\|\Pi_1-\Pi_2\|_{F}=\sqrt{1-\langle \Pi_1,\Pi_2 \rangle},$$
where $\langle \Pi_1,\Pi_2 \rangle=\text{Re}\ \text{Tr}\ \frac{1}{2}(\Pi_1\Pi_2+\Pi_2\Pi_1)$. This is the chordal distance on $\OP^2$ whereas the geodesic distance can be defined through $\sin{\frac{\vartheta(x,y)}{2}} = \rho(x,y)$, as in the above projective spaces. All $\Pi$ given as above may be written in the form 
$$ \begin{pmatrix} |a|^2 & a\overline{b} & a\overline{c} \\ b\overline{a} & |b|^2 & b\overline{c} \\ c\overline{a} & c\overline{b} & |c|^2 \end{pmatrix}, $$ 
where $|a|^2+|b|^2+|c|^2=1$ and $(ab)c=a(bc)$. This gives a representation of $\OP^2$ as the quotient $F_4/{\rm Spin}(9)$ \cite[p. 189]{baez2002oct}. 

One feature of spaces $ \Omega $ that allows for the application of linear programming methods is the existence of a decomposition of $L^2(\Omega, \sigma)$, the space of complex-valued square-integrable functions on $ \Omega $:
\[L^2(\Omega, \sigma)=\bigoplus\limits_{n\geq 0}V_n , \] 	
 where $ V_n $ are finite-dimensional irreducible representations of the isometry group of $ \Omega $ (see \cite{levenshteinDesigns1992}). Moreover, these are in correspondence with the eigenspaces  of the Laplace--Beltrami operator on $\Omega$  corresponding to the $n$-th eigenvalue in the increasing order. 
Let $Y_{n,k}$, $ k =1,\ldots,\dim V_n $, be an orthonormal basis in $V_n$. Because of the invariance of $ V_n $ and due to the two-point homogeneity of $\Omega$, the reproducing kernel for $ V_n $ only depends on the distance $\vartheta(x,y)$  between points \cite{venkovRes2001}. Furthermore, as a function of \[
    \tau(x,y) := \cos \vartheta(x,y).
\]  the reproducing kernel is a polynomial $C_n$ of degree $n$, which satisfies
\begin{equation}
    \label{eq:addition_formula}
    C_n(\tau(x,y))= \frac1{\dim V_n} \sum\limits_{k=1}^{\dim V_n} Y_{n,k}(x) \overline{Y_{n,k}(y)}. 
\end{equation}

\noindent Formula \eqref{eq:addition_formula} is known as the {\it addition formula}, and shows that functions $C_n$ are {\it positive definite} on $\Omega$, that is, 
\[\sum\limits_{1\leq i,j\leq k} c_i\overline{c}_j C_n(\tau{(x_{i},x_{j})})\geq 0 \] 
for all coefficients $c_1, ..., c_k \in \C$, and all $x_1, ..., x_k \in \Omega$.

The polynomials $C_n$ given by \eqref{eq:addition_formula} satisfy $ C_n(1) = 1 $ and are orthogonal with respect to the probability measure
\[
    d\nu^{(\alpha, \beta)}  =  \frac1{\gamma_{\alpha,\beta}} (1-t)^{\alpha} (1+t)^{\beta} dt,
\]
where $\alpha = (d-1) \dim_{\mathbb R} (\mathbb F)/{2}-1$ and
\begin{equation}
    \label{eq:jacobibeta}
    \beta = \begin{cases}
        \alpha, & \text{ if } \Omega = \S^{d-1};\\
        {\dim_{\R}(\F)}/{2} -1, & \text{ if } \Omega = \mathbb{FP}^{d-1},
    \end{cases}
 \end{equation} 
 and the normalization factor is given by
 \[ 
     \gamma_{\alpha,\beta} = 2^{\alpha +\beta +1}B(\alpha+1, \beta+1),
 \]
 where $B$ is the beta function. These polynomials, known as Jacobi polynomials (Gegenbauer polynomials in the special case when $\Omega = \mathbb S^{d-1}$), form an orthogonal basis in $L^2([-1,1],d\nu^{(\alpha, \beta)})$; equivalently, the span of $ C_n(\tau(x,y)) $, $ n\geq 0 $, is dense in the subset of  $ L^2(\Omega\times \Omega, \sigma\otimes\sigma) $ consisting  of functions that depend only on the distance between $ x $ and $ y $.

This allows for expanding functions from $L^2([-1,1],d\nu^{(\alpha, \beta)})$ in terms of $C_n$:
\[f(t)=\sum\limits_{n=0}^\infty \widehat{f}_nC_n(t), \quad \text{where}\quad \widehat{f}_{n}= {\dim V_n}\int_{-1}^{\ 1} f(t) C_n(t)\,d\nu^{(\alpha, \beta)}(t).\] 
As we have already done above, for a fixed space $ \Omega $ we will not indicate the dependence of polynomials $ C_n = C_n^{(\alpha,\beta)} $ on the indices $ \alpha $, $ \beta $. We refer to $ \widehat f_n $ as the Jacobi coefficients of the function  $ f $; the normalization $ C_n(1) = 1 $ used here is common in the coding theory community \cite{szego1975orth, levenshteinDesigns1992}.

\subsection{Energies on 2-point homogeneous spaces}
\label{sec:energies}
 
For the space of probability measures $\P(\Omega)$ supported on $\Omega$, and for a lower semi-continuous function $f:[-1,1]\rightarrow \R\cup \infty$, the $f$-energy integral is defined as the functional mapping $\mu$ to 
$$I_f(\mu)=\int_{\Omega}\int_{\Omega}f(\tau(x,y))d\mu(x)d\mu(y).$$
Observe that when $\Omega = \S^{d-1}$, we have $ \tau(x,y) = \cos \vartheta(x,y) = \langle x,y \rangle$ and the definition above coincides with \eqref{eq:muener}.

We start by introducing  the notion of positive definite functions, which plays an important role in energy minimization and for the linear programming bounds we derive later. Below $C[-1,1]=C_{\R}[-1,1]$  denotes the space of continuous real valued functions on the interval $[-1,1]$.

 \begin{definition}\label{def:pd}
 Let $f\in C[-1,1]$. We say that $f$ is {\it{positive definite}}  on $\Omega$ if for any $ x_1, ..., x_N \in  \Omega $  the matrix $\big[  f\big( \tau (x_i, x_j ) \big) \big]_{i,j=1}^N$ is positive semidefinite, i.e. for every collection $  c_1,\ldots,c_N  \in \mathbb C $ we have   \[
        \sum_{1\leq i,j \leq N} f( \tau(x_i, x_j) )c_i \overline{c_j} \geq 0.
    \] 
 \end{definition}
 We have already seen that the Jacobi polynomials $C_n$ are positive definite on $\Omega$, and so their positive linear combinations must also be. It is a classical fact that this implication can be reversed:
 \begin{proposition}\cite{bochner, schoenbergPositive1941, gangolli}\label{prop:schon}
    A function $ f\in C[-1,1] $ is positive definite on $\Omega$    if and only if $ \widehat{f}_n \geq 0 $  for all $ n \geq 0 $.
\end{proposition}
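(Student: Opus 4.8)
The plan is to establish the two implications separately; the genuinely nontrivial classical input enters only in the ``if'' direction.

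\emph{Necessity.} Assuming $f$ is positive definite on $\Omega$, I would show $\widehat f_n\ge 0$. The starting point is that the Jacobi measure $\nu^{(\alpha,\beta)}$ is the pushforward of $\sigma\otimes\sigma$ under $(x,y)\mapsto\tau(x,y)$, i.e.\ $\int_{-1}^{1}g\,d\nu^{(\alpha,\beta)}=\int_\Omega\int_\Omega g(\tau(x,y))\,d\sigma(x)\,d\sigma(y)$ for every $g\in C[-1,1]$: both sides are probability measures on $[-1,1]$ agreeing on each $C_n$ (use \eqref{eq:addition_formula} together with $\int_\Omega Y_{n,k}\,d\sigma=0$ for $n\ge1$), hence on all polynomials, hence everywhere by Weierstrass. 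Substituting $g=f\,C_n$ into this identity and invoking \eqref{eq:addition_formula} once more gives
\[
  \widehat f_n=\dim V_n\int_{-1}^{1}f(t)C_n(t)\,d\nu^{(\alpha,\beta)}(t)=\sum_{k=1}^{\dim V_n}\int_\Omega\int_\Omega f(\tau(x,y))\,Y_{n,k}(x)\,\overline{Y_{n,k}(y)}\,d\sigma(x)\,d\sigma(y).
\]
It then remains to note that a continuous positive definite kernel $K(x,y)=f(\tau(x,y))$ satisfies $\int_\Omega\int_\Omega K(x,y)\phi(x)\overline{\phi(y)}\,d\sigma(x)\,d\sigma(y)\ge0$ for every $\phi\in C(\Omega)$; this follows from Definition~\ref{def:pd} by approximating the integral with Riemann sums $\sum_{m,m'}K(z_m,z_{m'})c_m\overline{c_{m'}}$ with $c_m=\phi(z_m)\sqrt{\sigma(A_m)}$, and passing to the limit (legitimate, since $K\cdot(\phi\otimes\overline\phi)$ is continuous on the compact set $\Omega\times\Omega$). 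Taking $\phi=Y_{n,k}$ makes each summand above nonnegative, so $\widehat f_n\ge0$.

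\emph{Sufficiency.} Now assume $\widehat f_n\ge0$ for all $n$. First, $|C_n(t)|\le1$ on $[-1,1]$: by \eqref{eq:addition_formula} and the Cauchy--Schwarz inequality $|C_n(\tau(x,y))|\le\frac{1}{\dim V_n}\big(\sum_k|Y_{n,k}(x)|^2\big)^{1/2}\big(\sum_k|Y_{n,k}(y)|^2\big)^{1/2}=1$, and $\tau$ ranges over all of $[-1,1]$. Hence $|\widehat f_n|\le\dim V_n\,\|f\|_\infty$ grows at most polynomially in $n$, so for each $r\in(0,1)$ the Abel (Poisson) mean $f_r(t):=\sum_{n\ge0}\widehat f_n\,r^n\,C_n(t)$ converges absolutely and uniformly on $[-1,1]$; being a uniform limit of the positive definite polynomials $\sum_{n\le N}\widehat f_n\,r^n\,C_n$, each $f_r$ is positive definite. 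On the other hand, the Jacobi--Poisson kernel $\sum_n\dim V_n\,r^n\,C_n(t)C_n(s)$ is nonnegative (classical for ultraspherical weights, Gasper in general) with total mass $1$ against $\nu^{(\alpha,\beta)}$, so it is an approximate identity and the Abel means of the Jacobi expansion of the continuous function $f$ converge uniformly: $f_r\to f$ as $r\to1^-$. Therefore, for any $\{x_i\}_{i=1}^N\subset\Omega$ and $\{c_i\}_{i=1}^N\subset\mathbb C$,
\[
  \sum_{1\le i,j\le N}f(\tau(x_i,x_j))\,c_i\,\overline{c_j}=\lim_{r\to1^-}\ \sum_{1\le i,j\le N}f_r(\tau(x_i,x_j))\,c_i\,\overline{c_j}\ \ge\ 0.
\]

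I expect the main obstacle to be exactly this last ingredient of the sufficiency direction: one cannot write $f=\sum_n\widehat f_n\,C_n$ pointwise, since the Jacobi series of a merely continuous function need not converge, so one must summate, and the argument rests on the positivity of the Jacobi--Poisson kernel (equivalently, positivity properties of products of Jacobi polynomials) — the one genuinely nontrivial classical fact, everything else being bookkeeping with \eqref{eq:addition_formula} and Riemann-sum approximation. An alternative is to first deduce $\sum_n\widehat f_n=f(1)<\infty$ (from Abel summability at $t=1$ and monotone convergence), after which $\sum_n\widehat f_n\,C_n$ converges to $f$ uniformly and the Abel means are unnecessary — but kernel positivity is still what powers that step.
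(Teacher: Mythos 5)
The paper offers no proof of Proposition~\ref{prop:schon}: it is stated as a classical fact with citations to Bochner, Schoenberg, and Gangolli, so there is no in-paper argument to compare yours against. Your reconstruction is essentially the standard one and is sound. The necessity half is complete and elementary: the identification of $\nu^{(\alpha,\beta)}$ as the pushforward of $\sigma\otimes\sigma$ under $\tau$, the resulting formula $\widehat{f}_n=\sum_{k}\int_\Omega\int_\Omega f(\tau(x,y))\,Y_{n,k}(x)\overline{Y_{n,k}(y)}\,d\sigma(x)\,d\sigma(y)$, and the passage from the discrete quadratic-form inequality of Definition~\ref{def:pd} to its integral version are all correct; the one slip is the normalization $c_m=\phi(z_m)\sqrt{\sigma(A_m)}$, which makes the quadratic form nonnegative but is not the Riemann sum of the double integral --- take $c_m=\phi(z_m)\sigma(A_m)$ instead (still an admissible choice in Definition~\ref{def:pd}, and now it converges to $\int_\Omega\int_\Omega K\,\phi\,\overline{\phi}\,d\sigma\,d\sigma$). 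The sufficiency half correctly isolates the real difficulty (the Jacobi series of a merely continuous $f$ need not converge pointwise, so one must summate) and resolves it via Abel means together with the nonnegativity of the Jacobi--Poisson kernel; that nonnegativity is a genuine classical theorem (Bochner for ultraspherical weights, Gasper for general Jacobi parameters) which you cite rather than prove, and since the paper itself leaves both this proposition and the closely related convergence lemma stated after Proposition~\ref{prop:pdmin} without proof, outsourcing exactly this one ingredient is a fair division of labor. Your closing remark is in fact the cleanest way to finish: once $f_r(1)\to f(1)$ yields $\sum_n\widehat{f}_n=f(1)<\infty$, the bound $|C_n|\le 1$ makes $\sum_n\widehat{f}_n C_n$ converge absolutely and uniformly to a continuous function with the same Jacobi coefficients as $f$, hence to $f$, and positive definiteness of $f$ follows by taking limits of the positive definite partial sums --- this simultaneously establishes the paper's unproved convergence lemma.
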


Next we show that positive definite functions $f$ give rise to $f$-energy integrals which are minimized over probability measures by the surface (or Haar) measure $\sigma$ on $\Omega$. This result appears in a number of papers, see for instance \cite{damelin2003energy, bilyk2018geodesic}. We adapt the proof given in \cite{bilyk2018geodesic} to our purposes. 

\begin{proposition}\label{prop:pdmin} Let $f\in C[-1,1]$, $f(t)=\sum\limits_{n=0}^\infty \widehat{f}_n C_n(t)$, and $\mu\in\P(\Omega)$. Then, the following are equivalent: \begin{enumerate}[(i)] \item $\widehat{f}_{n}\ge 0$ for all $n\geq 1$,
\item the surface measure $\sigma$ is a minimizer of $I_f$.
\end{enumerate}
Moreover,  $\sigma$ is the unique  minimizer of $I_f$ if and only if $\widehat{f}_n> 0$ for all $n\ge 1$. 
\end{proposition}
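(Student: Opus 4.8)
The plan is to expand $I_f(\mu)$ in terms of the Jacobi coefficients $\widehat f_n$ and of the "Fourier coefficients" of the measure $\mu$ with respect to the orthonormal basis $\{Y_{n,k}\}$, reducing the whole statement to a term-by-term comparison against $\sigma$. Concretely, for $\mu\in\P(\Omega)$ write $\widehat\mu_{n,k}=\int_\Omega Y_{n,k}(x)\,d\mu(x)$ (using, as the excerpt suggests, real-valued $Y_{n,k}$). Plugging $f(\tau(x,y))=\sum_n\widehat f_nC_n(\tau(x,y))$ into the double integral and using the addition formula \eqref{eq:addition_formula}, one gets
\[
I_f(\mu)=\sum_{n\geq 0}\frac{\widehat f_n}{\dim V_n}\sum_{k=1}^{\dim V_n}\Big|\widehat\mu_{n,k}\Big|^2
=\widehat f_0+\sum_{n\geq 1}\frac{\widehat f_n}{\dim V_n}\sum_{k=1}^{\dim V_n}\big|\widehat\mu_{n,k}\big|^2,
\]
where the $n=0$ term is just $\widehat f_0=I_f(\sigma)$ because $V_0$ is the constants and $\mu$ is a probability measure (so $\widehat\mu_{0,1}$ is a fixed nonzero constant, and all higher $\widehat\sigma_{n,k}=0$ by orthogonality to constants). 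First I would justify this interchange of summation and integration; then the equivalence is essentially immediate.

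For (i) $\Rightarrow$ (ii): if $\widehat f_n\geq 0$ for all $n\geq 1$, every term in the sum over $n\geq 1$ is nonnegative, so $I_f(\mu)\geq\widehat f_0=I_f(\sigma)$, i.e. $\sigma$ minimizes. For (ii) $\Rightarrow$ (i): suppose some $\widehat f_m<0$ with $m\geq 1$. I would exhibit a measure $\mu$ with $\widehat\mu_{m,k}\neq 0$ for some $k$ and, more carefully, with the negative contribution of the $m$-th term not cancelled — the cleanest way is to take $\mu_\varepsilon=(1+\varepsilon g)\,\sigma$ where $g$ is a suitable real spherical harmonic in $V_m$ (so $\mu_\varepsilon\geq 0$ for small $\varepsilon$, it is a probability measure since $\int g\,d\sigma=0$), and compute $I_f(\mu_\varepsilon)=\widehat f_0+\varepsilon^2\frac{\widehat f_m}{\dim V_m}\|g\|_{V_m\text{-coeff}}^2<\widehat f_0$, contradicting optimality of $\sigma$. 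For the uniqueness clause: if all $\widehat f_n>0$ for $n\geq 1$, then $I_f(\mu)=\widehat f_0$ forces $\widehat\mu_{n,k}=0$ for all $n\geq 1$ and all $k$, hence all nonconstant Fourier coefficients of $\mu$ vanish, which by completeness of $\bigoplus V_n$ in $L^2(\sigma)$ (equivalently, density of polynomials in $C(\Omega)$ and the Riesz representation theorem) forces $\mu=\sigma$. Conversely, if $\widehat f_m=0$ for some $m\geq 1$, then any $\mu_\varepsilon=(1+\varepsilon g)\sigma$ with $g\in V_m$ as above is a distinct minimizer, so uniqueness fails; if some $\widehat f_m<0$, $\sigma$ is not even a minimizer. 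This covers all cases.

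The main obstacle I anticipate is purely analytic rather than conceptual: justifying the term-by-term integration
\[
\int_\Omega\int_\Omega\Big(\sum_{n}\widehat f_nC_n(\tau(x,y))\Big)d\mu(x)\,d\mu(y)=\sum_n\widehat f_n\int_\Omega\int_\Omega C_n(\tau(x,y))\,d\mu(x)\,d\mu(y)
\]
when $f$ is merely continuous (the Jacobi expansion converges in $L^2(d\nu^{(\alpha,\beta)})$, not necessarily uniformly or absolutely). The standard fix is to first prove the proposition for $f$ a polynomial (finite sum, no convergence issue), and then pass to general continuous $f$ by approximation: choose polynomials $p_j\to f$ uniformly on $[-1,1]$ (Weierstrass), note $I_{p_j}(\mu)\to I_f(\mu)$ uniformly in $\mu\in\P(\Omega)$, and transfer the inequality $I_{p_j}(\mu)\geq I_{p_j}(\sigma)$ in the limit — but one must be careful that the positivity hypothesis $\widehat f_n\geq 0$ does not automatically pass to the approximants $p_j$, so instead one argues directly: (i) $\Rightarrow$ (ii) follows from monotone/dominated convergence applied to the partial sums $\sum_{n\leq M}\widehat f_nC_n$, all of whose nonconstant coefficients are nonnegative, together with the fact that $\sum_{n\leq M}\widehat f_n C_n\to f$ in $L^2(d\nu)$ and hence $I_{\sum_{n\leq M}\widehat f_nC_n}(\mu)\to I_f(\mu)$ by Cauchy–Schwarz in $L^2(\mu\otimes\mu)$ once one controls the kernels — here a clean route is to invoke lower semicontinuity / Fatou on the nonnegative tail. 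For (ii) $\Rightarrow$ (i) and the uniqueness direction the perturbation argument $\mu_\varepsilon=(1+\varepsilon g)\sigma$ only ever involves $g\in V_m$, so $I_f(\mu_\varepsilon)$ is computed by pairing $f$ against a single harmonic and reduces to $\widehat f_m=\dim V_m\int f\,C_m\,d\nu^{(\alpha,\beta)}$, which is well defined for continuous $f$ with no convergence subtlety at all. So the delicate point is confined to the forward implication (i) $\Rightarrow$ (ii), and I would handle it via the partial-sum-plus-nonnegative-tail argument rather than trying to sum the full series.
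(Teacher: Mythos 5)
Your proof follows essentially the same route as the paper's: expand $I_f(\mu)$ via the addition formula into $\widehat f_0+\sum_{n\ge1}\frac{\widehat f_n}{\dim V_n}\sum_k|\widehat\mu_{n,k}|^2$, read off (i)$\Rightarrow$(ii) from nonnegativity of the terms, and prove the converse and the failure of uniqueness by perturbing $\sigma$ to $(1+\epsilon Y_{n,1})\,d\sigma$ — this is exactly the paper's argument, including the uniqueness clause.

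The one place you diverge is the justification of the term-by-term integration, and there your proposed fix is the weak link. The paper invokes a separate lemma (a Jacobi analogue of the classical fact for Fourier series) that $\widehat f_n\ge0$ for $n\ge1$ forces the expansion to converge \emph{uniformly and absolutely} on $[-1,1]$, after which Fubini is immediate. Your alternative — $L^2(d\nu^{(\alpha,\beta)})$ convergence of the partial sums plus ``Fatou on the nonnegative tail'' — does not work as stated: the pushforward of $\mu\otimes\mu$ under $\tau$ need not be absolutely continuous with respect to $d\nu^{(\alpha,\beta)}$ (for discrete $\mu$ it is purely atomic), so $L^2(d\nu)$ convergence controls nothing about $I_{S_M}(\mu)$ versus $I_f(\mu)$; and the tail $f-S_M=\sum_{n>M}\widehat f_nC_n$ is nonnegative only after integration against $\mu\otimes\mu$, not pointwise, so Fatou does not apply to the integrand. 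A clean repair within your framework is to note that the tail $f-S_M$ is continuous with all Jacobi coefficients nonnegative, hence positive definite by Proposition \ref{prop:schon}, hence $I_{f-S_M}(\mu)\ge0$ for every $\mu\in\P(\Omega)$ (by weak-$*$ density of discrete measures), giving $I_f(\mu)\ge I_{S_M}(\mu)\ge\widehat f_0$ directly without ever summing the full series. With that substitution, or with the paper's uniform-convergence lemma, your argument is complete; the remaining steps (the perturbation computation via Funk--Hecke and the uniqueness dichotomy) are correct and match the paper.
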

To prove this statement we use the following lemma, generalizing the  behavior of Fourier expansions with positive coefficients \cite{gangolli,Lyubich2009} to Jacobi expansions with the same property.
\begin{lemma} Assume that $f\in C[-1,1]$ has the Jacobi expansion $f(t)=\sum\limits_{n=0}^\infty \widehat{f}_n C_n(t)$ with  $\widehat{f}_n\geq 0$ for all $n\geq 1$. Then this expansion  converges uniformly and absolutely to $f$ on $[-1,1]$.
\end{lemma}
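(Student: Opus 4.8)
The plan is to reduce the statement to a classical fact about Jacobi series via the key observation that at the point $t=1$ all polynomials $C_n$ take the value $1$, so that $f(1)=\sum_{n=0}^\infty \widehat f_n C_n(1) = \sum_{n=0}^\infty \widehat f_n$ provided we can make sense of this. The first step is to note that since $\widehat f_n \ge 0$ for $n \ge 1$, absolute convergence of $\sum_n \widehat f_n C_n(t)$ on $[-1,1]$ is equivalent to the convergence of the single numerical series $\sum_n \widehat f_n$, because $|C_n(t)| \le C_n(1) = 1$ for all $t \in [-1,1]$ (the polynomials $C_n$ are normalized reproducing kernels and, as a consequence of the addition formula \eqref{eq:addition_formula} together with Cauchy--Schwarz, attain their maximum absolute value at $t=1$). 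So the heart of the matter is to prove $\sum_{n\ge 0} \widehat f_n < \infty$.

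For this, I would use the standard Abel/Cesàro summability machinery for Jacobi expansions. The partial sums $S_N f = \sum_{n=0}^N \widehat f_n C_n$ need not converge to $f$ for a general continuous $f$, but the Cesàro means $\sigma_N^\delta f$ of sufficiently high order $\delta$ (depending on $\alpha,\beta$) do converge uniformly to $f$ on $[-1,1]$ — this is a classical theorem of Szegő for Jacobi polynomials; see \cite{szego1975orth}. Alternatively one can use Abel means $\sum_n \widehat f_n r^n C_n(t)$, which converge to $f$ as $r \to 1^-$ by a Poisson-kernel argument since the Poisson-type kernel $\sum_n r^n C_n(t) \dim V_n$ is a nonnegative approximate identity. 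Evaluating the Abel (or Cesàro) means at $t = 1$: by uniform convergence, $\sum_{n=0}^\infty \widehat f_n r^n \to f(1)$ as $r\to 1^-$. Since $\widehat f_0$ may have either sign but all $\widehat f_n \ge 0$ for $n\ge 1$, the tail $\sum_{n=1}^\infty \widehat f_n r^n$ is a monotone-in-$r$ sum of nonnegative terms bounded above by $f(1)-\widehat f_0$; by the monotone convergence theorem (letting $r \to 1^-$) we get $\sum_{n=1}^\infty \widehat f_n \le f(1) - \widehat f_0 < \infty$. Hence $\sum_{n\ge 0}\widehat f_n$ converges absolutely.

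Once $\sum_n \widehat f_n < \infty$ is established, absolute and uniform convergence of $\sum_n \widehat f_n C_n(t)$ on $[-1,1]$ follows immediately from the Weierstrass $M$-test with $M_n = |\widehat f_n|$ (using $\widehat f_0$ and then $\widehat f_n = |\widehat f_n|$ for $n \ge 1$), since $\|C_n\|_{C[-1,1]} \le 1$. It remains to identify the uniform limit with $f$ itself: the limit $g(t) := \sum_n \widehat f_n C_n(t)$ is continuous, and $\widehat g_n = \widehat f_n$ for all $n$ by termwise integration against $C_n \, d\nu^{(\alpha,\beta)}$ (justified by uniform convergence), so $f - g$ is a continuous function all of whose Jacobi coefficients vanish; since the $C_n$ span a dense subspace of $L^2([-1,1], d\nu^{(\alpha,\beta)})$, we conclude $f = g$ in $L^2$ and hence, both being continuous, pointwise everywhere.

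The main obstacle is the second step — justifying that one may "evaluate the Jacobi series at $t=1$" despite the series not being known a priori to converge there. The clean way around this is precisely the summability argument above (Abel means plus monotone convergence), which sidesteps any delicate estimate on partial sums and only uses the nonnegativity of the Poisson-type kernel and Szegő's classical uniform-convergence result for Cesàro/Abel means of Jacobi expansions of continuous functions. One should be slightly careful to state the correct Cesàro order $\delta > \alpha + 1/2$ (or simply invoke Abel summability, for which nonnegativity of the kernel makes the argument self-contained), but these are standard facts cited from \cite{szego1975orth} rather than things to prove here.
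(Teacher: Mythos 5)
The paper does not actually prove this lemma: it is stated without proof, introduced only by the remark that it ``generalizes the behavior of Fourier expansions with positive coefficients'' with citations to Gangolli and Lyubich. So there is no in-paper argument to compare against, and your proposal should be judged on its own. It is correct, and it is precisely the classical argument one would expect: the bound $|C_n(t)|\le C_n(1)=1$ via the addition formula and Cauchy--Schwarz is right; the reduction to $\sum_n \widehat f_n<\infty$ is right; and the summability-plus-monotonicity step (Abel means at $t=1$ converge to $f(1)$, the tail $\sum_{n\ge 1}\widehat f_n r^n$ is nondecreasing in $r$ and bounded by $f(1)-\widehat f_0$, hence $\sum_{n\ge1}\widehat f_n\le f(1)-\widehat f_0$) is the standard Herglotz-type argument, correctly adapted to the fact that only the coefficients with $n\ge1$ are assumed nonnegative. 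The identification of the uniform limit with $f$ via vanishing Jacobi coefficients and $L^2$-density is also fine. The one step you should not wave at too quickly is the nonnegativity of the Poisson-type kernel $\sum_n r^n \dim V_n\, C_n(t)$: for general Jacobi parameters this is Bailey's theorem and requires $\alpha\ge\beta\ge -1/2$, which does hold for every compact two-point homogeneous space, so your argument goes through, but it is a genuinely nontrivial classical input rather than an obvious one (for the concrete spaces here one could instead use positivity of the heat kernel on $\Omega$, or bypass summability entirely by applying Mercer's theorem to the continuous positive semidefinite kernel $f(\tau(x,y))-\widehat f_0$, which yields the absolute and uniform convergence in one stroke). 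With that caveat made explicit, your proof is complete and is a faithful filling-in of the citation the authors left in place of a proof.
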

\begin{proof}[Proof of Proposition~\ref{prop:pdmin}] We first show that $\sigma$ is a minimizer of $I_{f}$. Assume that $\widehat{f}_n\geq 0$ for all $n\geq 1$. Then by the lemma above,  the Fubini theorem, and the addition formula,  we have 
\begin{align*} 
I_f(\mu)&=\sum\limits_{n=0}^\infty \widehat{f}_n \int_{\Omega}\int_{\Omega} C_n(\tau(x,y))d\mu(x)d\mu(y) \\
&=\sum\limits_{n=0}^\infty\frac{1}{\dim V_n}  \sum\limits_{k=1}^{\dim V_n} \widehat{f}_n\int_{\Omega}\int_{\Omega} Y_{n,k}(x) \overline{Y_{n,k}(y)} d\mu(x)d\mu(y) \\
&=\widehat{f}_0+\frac{1}{\dim V_n}\sum\limits_{n=1}^\infty b_{n,\mu}\ \widehat{f}_n  \ge \widehat{f}_0=I_{f}(\sigma).
\end{align*}
The last inequality holds since $b_{n,\mu}=\sum\limits_{k=1}^{\dim V_n} \left| \int_{\Omega}Y_{n,k}(x)d\mu(x) \right|^2\geq 0$. If $\widehat{f}_{n}> 0$ for all $n\geq 1$, then equality can be achieved above only if $\mu $ is orthogonal to all  spaces $V_n$, $n \geq 1$, which directly implies that $\mu = \sigma$. 

Let us assume that for some $m \in \mathbb{N}_0$, $\widehat{f}_n < 0$. For a fixed point $p \in \Omega$, we see that $Y_{n,1}(x) := C_n(\tau(x, p))$ is in $V_n$ and real-valued. Set $d\mu(x)=(1+\epsilon Y_{n,1}(x))d\sigma(x)$, where $\epsilon>0$ is sufficiently small so that $(1+\epsilon Y_{n,1}(x))\geq 0$ on $\Omega$. Orthogonality and the addition formula (or Funk-Hecke formula) give that for $Y\in V_{n},$ 
$$\int_{\Omega} f(\tau(x,y))Y(x)d\sigma(x) = \frac{1}{\dim(V_n)}\widehat{f}_n \overline{Y(y)} \text{ and } \int_{\Omega}Y (x)d\sigma=0.$$
Thus, 
\begin{align*} 
I_{f}(\mu)&=\int_{\Omega}\int_{\Omega}f(\tau(x,y))(1+\epsilon Y_{n,1}(x))(1+\epsilon Y_{n,1}(y))d\sigma(x)d\sigma(y) \\
&= I_{f}(\sigma)+ \frac{1}{\dim(V_n)}\int_{\Omega}\epsilon^2 \widehat{f}_n  Y^2_{n,1}(x) d\sigma(x) < I_{f}(\sigma),
\end{align*}
implying that $\sigma$ is not a minimizer for $I_{f}$. If $\widehat{f}_n = 0$ for some $n\geq 1$, the same argument shows that $I_f (\mu ) = I_f (\sigma)$, i.e. $\sigma $ is not the unique minimizer.  
\end{proof}\vskip3mm

The $p$-frame energies correspond to  taking $\Omega = \FP^{d-1}$ ($\F = \R$, $\C$, or $\mathbb H$) and  $f$  of the form 
\begin{equation}
    \label{eq:symmetrized_pkernel}
    f(t)=\bigg(\frac{1+t}{2}\bigg)^{\frac{p}{2}},
\end{equation}
because in this case, since $\tau(x,y) = \cos \vartheta(x,y)= 2|\langle x,y \rangle|^2-1$, we have  
$$ f (\tau(x,y)) = f \big( 2|\langle x,y \rangle|^2-1 \big) = |\langle x,y \rangle|^p.$$
 We shall  now prove that, whenever $p$ is an even integer,  these energies are minimized by the uniform measure on $\Omega$.   
 
 When $p=2k$ and  $\Omega = \FP^{d-1}$  ($\F=\R,\C,$ or $\mathbb H$), we have  that  $f(t)=2^{-k} \cdot (1+t)^{k}$ is a polynomial. It is standard to check that  this polynomial  is positive definite on $\Omega$: this could be done by checking that the coefficients in its Jacobi expansion are non-negative, but it would be  perhaps simpler to prove it as follows. Observe that,  since $C_0^{(\alpha,\beta)} (t) =1$ and $C_1^{(\alpha,\beta)} (t) = \frac{\alpha - \beta}{2(\alpha+1)}+ \frac{\alpha + \beta +2}{2(\alpha+1)}\cdot t$, we have that  
 $$ 1 +t =  \frac{2(\alpha+1)}{(\alpha + \beta +2) } \, C_1^{(\alpha,\beta)} (t) \,  + \, \frac{2(\beta +1)}{\alpha + \beta +2 }\, C_0^{(\alpha,\beta)} (t).$$
 Since  $\alpha +1 = \frac{d-1}2 \cdot \dim_{\mathbb R} (\mathbb F) >0$ and $\beta + 1 =  \frac{1}{2} \cdot \dim_{\mathbb R} (\mathbb F) >0$, we see that the function $1+t$ is positive definite on $\Omega$. The well known Schur's theorem on  Hadamard (elementwise) products of positive semidefinite matrices implies that if $g$ and $h$ are positive definite on $\Omega$, then so is their product $gh$, and, in particular, all integer powers $g^n$  are positive definite. Hence, the function $f(t) = 2^{-k}\cdot (1+t)^k$ is positive definite on $\Omega$, and therefore $I_f$ is minimized by the uniform surface measure $\sigma$. 

The minimal values of the $p=2k$ energy may be expressed in terms of elementary functions for each $\F$. These constants, $c_{\F}(d,k)$, are given below 

\begin{align*} c_{\F}(d,k) & = \frac{1\cdot 3\cdot 5 \dots (2k-1)}{d\cdot(d+2)\dots (d+2(k-1))},\ & \F=\R, \\
c_{\F}(d,k) & = 1/\binom{d+k-1}{k},\ & \F=\C, \\
c_{\F}(d,k) & = (k+1)/\binom{2d+k-1}{k},\ & \F=\mathbb{H}. \\
\end{align*}
  
When $p$ is not an even integer, the $p$-frame  energies are not positive definite, due to the appearance of negative terms in the Jacobi polynomial expansion of $f$, hence $\sigma$ does not  minimize the $p$-frame energy for $p \not\in 2 \N$, see Lemma 6.2.2 in \cite{Matzke}. 

\subsection{Designs}\label{s:des}
We now treat the topic of designs in the compact connected two-point homogeneous spaces $\Omega$. A finite, nonempty set (code) $\mathcal{C}\subset \Omega$ 
is called an {\it $M$-design} if 
\begin{equation}
    \label{eq:design}
    \frac{1}{|\mathcal{C}|}\sum\limits_{x\in\mathcal{C}} p(x)=\int_{\Omega} p(x)\, d\sigma(x)
\end{equation} 
holds for all polynomials $p$ of degree at most $ M $. 
A relaxation of the above identity allows the configuration to be weighted, so that the equality
\begin{equation}
    \label{eq:wtdesign}
\sum\limits_{x \in \mathcal C} \omega_{x} p(x) =\int_{\Omega} p(x)\, d\sigma(x),
\end{equation} 
holds for some weights $\{\omega_{x}\}_{x\in \mathcal C}\subset \R_{\geq 0}$, satisfying $\sum_{x \in \mathcal C} \omega_{x}=1$, and all polynomials $p$ of degree at most $ M $. Such weighted formulas are called {\it cubature formulas} or {\it weighted designs}. In both of the above equations, it is understood that polynomials $p$ may be given explicitly as complex-valued functions which are polynomials in coordinates of $\F^d$, satisfying additionally  $p(\alpha x)=p(x)$, for $|\alpha|=1$, $ \alpha \in \mathbb F $, in the projective case.

The {\it strength} of a (weighted) design is the maximum value of $M$ for which identity~\eqref{eq:design} (accordingly,~\eqref{eq:wtdesign}) holds.  
An $ M $-design can be equivalently defined as a configuration $ \mathcal C\subset \Omega $, for which
\begin{equation*} 
    \sum\limits_{x,y\in\mathcal{C}}C_n(\tau(x,y))=0\quad \text{ for }1\leq n \leq M.
\end{equation*} 
Equivalently, $ \mathcal C $ is an $M$-design in $\Omega$ if and only if it satisfies  
$$\sum\limits_{x\in\mathcal{C}} Y(x)=0\ \text{ for}\ Y\in\bigoplus\limits_{n=1}^{M} V_n.$$
Similar definitions can be given for weighted designs.

Linear programming bounds \cite{delsarteSpherical1977} imply exact constraints on the size of {\it tight designs}, configurations which, in addition to being $M$-designs, have the smallest possible number of pairwise distances between their elements, for a design of strength $M$. The exact definition may be given as follows.
\begin{definition}\label{d:tight}
A discrete set $ \mathcal C \subset \Omega $ is called a \textit{tight $M$-design} if one of the following conditions is satisfied. 
\begin{enumerate}
    \item[(i)] $ \mathcal C $ is a design of strength $ M=2m-1 $ and there are $ m $ distances between its distinct elements, including at least one pair diameter apart;
    \item[(ii)] $ \mathcal C $ is a design of strength $ M=2m $ and there are $ m $ distances between its distinct elements.
\end{enumerate} 
\end{definition}

Table \ref{tab:tsphdes} provides a list of known tight spherical designs (see, e.g.,  \cite{cohnUni2007}), as well as the $600$-cell, which is not a tight design, but will be of interest in Section \ref{sec:600cell}.  Each arrangement labeled `kissing' is the kissing configuration of a set. By centering non-overlapping congruent spherical caps of maximal radius at each point in a given code, the resulting points of tangency on a given cap form a spherical code in a lower dimensional space which we call the kissing configuration for that set.

Tight spherical designs with $d\geq 3$ and $M\geq 4$ may only exist for $M=4,5,$ and $7$ with the one exception of the spherical $11$-design formed by the Leech lattice minimal vectors \cite{bannaidam79, bannaidam80}. The problem of finding tight spherical $5$-designs is the same as that of finding maximal equiangular tight frames, and it is known that existence of a  tight spherical $5$-design in $\mathbb{S}^{d-1}$ is possible only for $d=1,2,3$ and for dimensions of the form  $d=(2k+1)^2-2$, where $k\geq 1$; see \cite{bannaidam79, bannaidam80, delsarteSpherical1977, LemSei73} for details on how these conditions arise. A direct correspondence with such spherical designs and regular graphs has long been recognized \cite{seidel2graph}, and, in connection, it is known that for infinitely many values of $k$, a tight spherical $5$-design cannot exist in dimension $d = (2k+1)^2-2$ \cite{bannaimumvenk, makhnev}.

\begin{table}[h] 
    \begin{center}
        \caption{A list of known tight spherical designs (with the $600$-cell). Here $M$ denotes the strength of the design, $d$  the dimension of the ambient space $\mathbb{R}^{d}$, and $N$ is the size of the design.}
        \label{tab:tsphdes}
        \begin{tabular}{ c  c  c  c  c } 
            $d$  & $N$ & $M$ & Inner products &  Name \vspace{1 mm} \\ \hline

\rule{0pt}{3ex}
$d$ & $2$ & $1$ & $\pm 1$ & Pair of antipodal points\\

$d$ & $d+1$ & $2$  & $-1/d, 1 $ & Regular simplex \\

$d$ & $2d$ & $3$  & $0,\pm 1 $ & Cross-polytope \\ 

$2$ & $N$ & $ N -1$ & $ \cos\left( 2j \pi/N \right)$, $0 \leq j \leq N/2$ & Regular $N$-gon \\ 

$3$ & $12$ & $5$ & $\pm 1/\sqrt{5}, \pm 1$ & Icosahedron \\ 

$4$ & $120$ & $11$ & $0, (\pm 1 \pm \sqrt{5})/4, \pm 1/2, \pm 1$ & $600$-cell \\ 

$6$ & $27$ & $4$ & $-1/2, 1/4, 1$ & Schl\"{a}fli config \\

$7$ & $56$ & $5$ & $ \pm 1/3, \pm 1$ & Kissing config for $E_{8}$ \\ 

$8$ & $240$ & $7$ & $0, \pm 1/2, \pm 1$ & $E_8$ root system \\ 

$22$ & $275$ & $4$ & $-1/4, 1/6, 1$ & McLaughlin config \\

$23$ & $552$ & $5$ & $\pm 1/5, \pm 1$ & Equiangular lines \\ 

$23$ & $4600$ & $7$ & $0, \pm 1/3, \pm 1$ & Kissing config for $\Lambda_{24}$\\ 

$24$ & $196560$ & $11$ &  $0, \pm \frac{1}{4}, \pm \frac{1}{2}, \pm 1$ & Leech Lattice $\Lambda_{24}$ minimal vectors
   
        \end{tabular}
    \end{center}
\end{table} 

Table \ref{tab:tprojdes} lists all known tight projective designs (see \cite{CKM}), except those for the spaces $\mathbb{FP}^1$ , as those are congruent to real spheres. Identifying tight projective designs is simple in the real setting. Tight  spherical designs of odd strength must be centrally symmetric \cite{delsarteSpherical1977}, and by choosing points from each antipode in an odd tight design, one arrives at a real projective tight design. Thus, all tight designs of odd strength in Table \ref{tab:tsphdes} correspond to entries in Table \ref{tab:tprojdes}.

For the other projective spaces, the vertices of a cross-polytope (i.e. an orthonormal basis in the projective space) always provide a tight $1$-design, as they did in $\mathbb{RP}^{d-1}$.  However, unlike the real case, it is known that no tight $t$-designs exist in the complex or quaternionic setting whenever $M\geq 4$ and $d \geq 3$ \cite{bannaihogg89, hogg89, Lyubich2009}. In the complex setting, tight $2$-designs, also known as \textit{symmetric, informationally complete, positive operator-valued measures (SIC-POVMs)}, are known to exist for $d \leq  16$, $d = 19, 24, 28, 35, 48$, and numerical experiments suggest that they may exist in every dimension \cite{AppBenBriEriGraLar14,  RBSC, SG, zau99}.  With exception of the $(3,15)$ quaternionic and $(3,27)$ octonionic designs from \cite{CKM}, explicit constructions are readily found for the other designs mentioned in Table \ref{tab:tprojdes} \cite{hoggar1982t}.

\begin{table}[h] 
    \begin{center}
        \caption{A list of parameters for which projective tight designs are known to exist (besides  designs in $\mathbb{FP}^1$ for $\mathbb{F} \neq \mathbb{R}$). Here $M$ denotes the strength of the design, $d$  the dimension of the ambient space $\mathbb{F}^{d}$, and $N$ is the size of the design. For SIC-POVMs, these configurations exist for certain values of $d$, and may or may not exist for all values.}\label{tab:tprojdes}
        \begin{tabular}{ c  c  c  c  c  c } 
            $d$  & $N$ & $M$ & $| \langle x, y \rangle|^2$ & $\mathbb{F}$ & Name \vspace{1 mm} \\ \hline \rule{0pt}{3ex}

$d$   & $d+1$  & $1$ & $0, 1$ & $\mathbb{R}$ & Cross-polytope/ONB  \\

$2$   & $N$  & $N-1$ & $\cos^{2}( \pi j/ N)$, $1 \leq j \leq N$ & $\mathbb{R}$ & Regular $2N$-gon   \\ 

$3$   & $6$  & $2$ & $1/5, 1$ & $\mathbb{R}$ & Icosahedron   \\

            $7$  & $28$  & $2$ & $1/9, 1$ & $\mathbb{R}$  & Kissing configuration for $E_8$\\
            
            $8$   & $120$  & $3$ & $0, 1/4, 1$ & $\mathbb{R}$  & Roots of   $E_8$ lattice\\
             
            $23$   & $276$  & $2$ & $1/25, 1$ &  $\mathbb{R}$  & Equiangular lines \\
            
            $23$   & $2300$  & $3$ & $0, 1/9, 1$ &  $\mathbb{R}$ & Kissing configuration for $\Lambda_{24}$ \\
            
            $24$   & $98280$  & $5$ & $0, {1}/{16}, {1}/{4}, 1$ & $\mathbb{R}$ & Minimal vectors of $\Lambda_{24}$ \vspace{1 mm} \\ \hline

\rule{0pt}{3ex}
$d$   & $d+1$  & $1$ & $0, 1$ & $\mathbb{C}$ & Cross-polytope/ONB  \\

$d$ & $d^2$ & $2$ & $1/(d+1), 1$ & $\mathbb{C}$ & SIC-POVM \\

$4$   & $40$  & $3$ & $0,{1}/{3}, 1$ &  $\mathbb{C}$ & Eisenstein structure on $E_{8}$  \\ 
            $6$   & $126$  & $3$ & $0,1/4, 1$ &  $\mathbb{C}$ & Eisenstein structure on $K_{12}$ \vspace{1 mm} \\ \hline

\rule{0pt}{3ex}
$d$   & $d+1$  & $1$ & $0, 1$ & $\mathbb{H}$ & Cross-polytope/ONB  \\

$3$   & $15$  & $2$ & $2/7, 1$ &  $\mathbb{H}$ & Equiangular lines  \vspace{1 mm} \\

$5$   & $165$  & $3$ & $0,1/4, 1$ &  $\mathbb{H}$ & Quaternionic reflection group  \vspace{1 mm} \\ \hline

\rule{0pt}{3ex}
$3$   & $d+1$  & $1$ & $0, 1$ & $\mathbb{O}$ & Cross-polytope/ONB  \\

$3$   & $27$  & $2$ & $2/13, 1$ & $\mathbb{O}$ & Equiangular lines \vspace{1 mm} \\

$3$   & $819$  & $5$ & $0,1/4,1/2, 1$ & $\mathbb{O}$ & Generalized hexagon\\
& & & & & of order $(2,8)$   
        \end{tabular}
    \end{center}
\end{table} 

A weaker property of a design is sharpness, which will not play a role here. The paper \cite{cohnUni2007} proves that sharp designs, and tight designs in particular, are minimizers for discrete minimization problems with absolutely monotone kernels. A similar approach allows us to show that tight designs are optimal for the continuous $p$-frame energy.

\subsection{Antipodal symmetry}
\label{subsec:antip}

We observe that the energy $I_f $ on the sphere $\S^{d-1}_\F$, $\mathbb{F} = \mathbb{R}, \mathbb{C}, \mathbb{H}$, for the kernels $f$ with $f(\langle x, y \rangle)=f(|\langle x, y \rangle|)$ remains the same after averaging over unit multiples of vectors in the support of $\mu$. Let $U(\F)$ be the set of units in $\F$, $U(\F)=\{c\in \F\ :\ |c|=1\}$, and $\eta$ be the uniform measure on $U(\F)$. If one defines, for positive Borel measure $\mu$ on the sphere $\mathbb{S}_{\mathbb{F}}^{d-1}$ and Borel sets $B \subset \S_\F^{d-1}$,
$$\nu(B)=\frac{1}{\eta(U(\F))}\int_{U(\F)} \mu(cB)d\eta(c),$$
then $I_{f}(\nu)=I_{f}(\mu)$ for potential functions $f$ as above. This is the primary reason it is natural to consider projective spaces $\F\mathbb{P}^{d-1}$ as the optimization spaces for $p$-frame energies, as opposed to the spheres,  in the cases when the elements $x\in \F\mathbb{P}^{d-1}$ may be represented by unit vectors in $\F^d$. 

This  discussion shows that a minimizing measure on the sphere for $I_{f}$, with $f$ as above, can be taken to be symmetric, and that the problem of minimizing over symmetric measures on spheres is equivalent to minimizing energy over projective spaces. In particular, this explains part \eqref{t1} of Theorem~\ref{t:mainsphere}, since tight spherical $(2M+1)$-designs are necessarily symmetric \cite{delsarteSpherical1977}  and hence correspond  to tight real projective $M$-designs. 

\section{Optimality of tight designs for  kernels absolutely monotonic to degree \texorpdfstring{$M$}{M}} 
\label{sec:tight}

\subsection{Linear programming}
\label{subsec:linprog2}

The main goal of this section is to show that for those dimensions and values of $t$ for which tight designs exist, they are the global minimizers of the $ p $-frame energies for intervals of $ p $ between consecutive even integers. We will use linear programming bounds to  this end. 

The linear programming method provides bounds for optima in various optimization problems, and its use   is often aided by computational tools, where a problem is approximated by a finite-dimensional or discretized counterpart, then solved with a computer. It is surprising that this simple method provides optimal bounds often. 
This technique applies to all the $2$-point homogeneous spaces $\Omega$ described above.

Our application of the method can be summed up in the following lemma, which is a measure-theoretic counterpart of the linear programming bound of Delsarte and Yudin \cite{delsarte1973algebraic,yudinMin1992}.

\begin{lemma}\label{lem:lp}
Let $h \in C[-1,1] $ be  a positive-definite function, i.e. $h(t)=\sum\limits_{n=0}^\infty \widehat{h}_n C_n(t)$ and $\widehat{h}_n\geq 0$ for all $n\geq 0$. 
\begin{enumerate}[(i)]
\item \label{lp1}Assume that  $h(t)\leq f(t)$ for all $t\in [-1,1]$, then for any $\mu \in \mathcal{P}(\Omega)$, $$I_{f}(\mu)\geq \widehat{h}_0 = I_h (\sigma).$$
\item\label{lp2} Assume further that $h$ is a polynomial of degree $k$ and that there exists a $k$-design $\mathcal C \subset \Omega$ such that  $h(t) = f (t)$ for each $t\in \{ \tau (x,y): \, x,y \in \mathcal C \}$. Then for any $\mu \in \mathcal{P}(\Omega)$, $$I_{f}(\mu)\geq  I_f \Big(\frac1{|\mathcal C|} \sum_{x\in \mathcal C} \delta_x \Big),$$
i.e. $I_f$ is minimized by the uniform distribution on $\mathcal C$. 
\end{enumerate}
\end{lemma}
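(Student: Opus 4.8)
The plan is to exploit the variational characterization of energy via positive-definiteness, exactly as in the classical Delsarte--Yudin scheme, adapted to probability measures. For part~\eqref{lp1}, I would start from the hypothesis $h \le f$ pointwise on $[-1,1]$. Since $\tau(x,y) \in [-1,1]$ for all $x,y\in\Omega$, integrating the inequality $h(\tau(x,y)) \le f(\tau(x,y))$ against the product measure $\mu\otimes\mu$ gives $I_h(\mu) \le I_f(\mu)$ for every $\mu\in\mathcal P(\Omega)$. The point is then to bound $I_h(\mu)$ from below by a quantity independent of $\mu$: because $h$ is positive definite, Proposition~\ref{prop:schon} gives $\widehat h_n \ge 0$ for all $n\ge 0$, so Proposition~\ref{prop:pdmin} (with the $n\ge 1$ coefficients nonnegative) yields that $\sigma$ minimizes $I_h$, i.e. $I_h(\mu) \ge I_h(\sigma) = \widehat h_0$. (Alternatively, one repeats the short computation from the proof of Proposition~\ref{prop:pdmin}: expand $h$ in the uniformly convergent Jacobi series, apply Fubini and the addition formula~\eqref{eq:addition_formula} to get $I_h(\mu) = \widehat h_0 + \sum_{n\ge 1} \frac{\widehat h_n}{\dim V_n} b_{n,\mu}$ with $b_{n,\mu}\ge 0$.) Chaining the two inequalities gives $I_f(\mu) \ge I_h(\mu) \ge \widehat h_0 = I_h(\sigma)$, which is the claim.

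For part~\eqref{lp2}, the strategy is to show that the lower bound $\widehat h_0$ from part~\eqref{lp1} is actually attained by the uniform measure $\mu_{\mathcal C} = \frac1{|\mathcal C|}\sum_{x\in\mathcal C}\delta_x$, so that $I_f$ is minimized there. Write $\nu = \mu_{\mathcal C}$. First, since $\mathcal C$ is a $k$-design and $h$ is a polynomial of degree $k$, the design property applied to the two-variable polynomial $h(\tau(x,y))$ (of degree $k$ in each variable) gives
\[
I_h(\nu) = \frac{1}{|\mathcal C|^2}\sum_{x,y\in\mathcal C} h(\tau(x,y)) = \int_\Omega\!\!\int_\Omega h(\tau(x,y))\, d\sigma(x)\,d\sigma(y) = \widehat h_0;
\]
concretely, $\sum_{1\le n\le k}$ of the Jacobi components of $h$ integrate to zero against the design, leaving only $\widehat h_0 C_0 = \widehat h_0$. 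Second, the interpolation hypothesis $h(t) = f(t)$ for every $t$ in the finite set $\{\tau(x,y): x,y\in\mathcal C\}$ forces $h(\tau(x,y)) = f(\tau(x,y))$ termwise in the sums, hence $I_h(\nu) = I_f(\nu)$. Combining, $I_f(\nu) = I_h(\nu) = \widehat h_0$, and by part~\eqref{lp1} this equals the universal lower bound $\inf_{\mu}I_f(\mu)$, so $\nu$ is a minimizer.

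The only genuinely delicate point is the justification that one may integrate the Jacobi expansion of $h$ term by term against $\mu\otimes\mu$ in part~\eqref{lp1}: this is where the lemma preceding Proposition~\ref{prop:pdmin} is invoked, guaranteeing uniform (hence $\mu\otimes\mu$-integrable, since $\mu$ is a probability measure) convergence of $\sum_n \widehat h_n C_n$ to $h$ when $\widehat h_n \ge 0$ for $n\ge 1$. With that in hand the rest is bookkeeping. In part~\eqref{lp2} no convergence issue arises at all, since $h$ is a polynomial and the design identity is exact; the only thing to be careful about is that a $k$-design integrates polynomials of degree $\le k$, and $h(\tau(\cdot,\cdot))$ has degree $k$ in each argument separately, which is exactly what the definition in Section~\ref{s:des} permits. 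I expect the write-up to be short, with the main conceptual content being the two-inequality sandwich $\widehat h_0 = I_h(\sigma) \le I_h(\mu) \le I_f(\mu)$ and the verification that equality propagates to $\mu_{\mathcal C}$.
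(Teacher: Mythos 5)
Your proposal is correct and follows essentially the same route as the paper: the sandwich $I_f(\mu) \ge I_h(\mu) \ge I_h(\sigma) = \widehat h_0$ via Proposition~\ref{prop:pdmin} for part~(i), and the chain of equalities $I_h(\sigma) = I_h(\mu_{\mathcal C}) = I_f(\mu_{\mathcal C})$ from the design property and the interpolation hypothesis for part~(ii). Your additional remarks on term-by-term integration of the Jacobi series are a sound (if slightly more explicit) account of what the paper delegates to Proposition~\ref{prop:pdmin} and its preceding lemma.
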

\begin{proof} For the first part observe that
$$ I_f(\mu) \geq I_h( \mu) \geq I_h(\sigma) = \widehat{h}_0,$$ where the first inequality follows from the fact that  $f\geq h$, while the second one is due to  Proposition \ref{prop:pdmin}, since $h$ is positive definite. 

For the second part, one can continue  as follows
$$ I_h (\sigma ) =  I_h \Big(\frac1{|\mathcal C|} \sum_{x\in \mathcal C} \delta_x \Big) =  I_f \Big(\frac1{|\mathcal C|} \sum_{x\in \mathcal C} \delta_x \Big).$$ The first equality follows from the fact that $\mathcal C$ is a $k$-design, and the second one from the fact that $f$ and $h$ coincide on the set $\{ \tau (x,y): \, x,y \in \mathcal C \}$. Together with part \eqref{lp1}  this proves the statement in part \eqref{lp2}. 
\end{proof}

This lemma provides insights in two different ways for how the linear programming method can be applied.

If a candidate $\mathcal C$ is available, one can apply part \eqref{lp2} of Lemma \ref{lem:lp} by  constructing a polynomial $h\le f$ as a Hermite interpolant of the function $f$  at the points of $\{ \tau (x,y): \, x,y \in \mathcal C \}$. This reasoning, which lies behind the proof of Theorems \ref{thm:tight} and \ref{t:mainsphere},  explains the appearance of  tight designs: indeed, the number of elements in the set of interpolation points (i.e. distinct distances between the points of $\mathcal C$) determines the degree of the interpolant $h$ -- hence one wants a design of high strength, but with few mutual distances. 

The same reasoning as above applies to the emergence of sharp designs as universally optimal sets in \cite{cohnUni2007}, and it also explains why this slightly weaker notion does not suffice for our purposes: since we are working with general measures rather than point sets with fixed cardinality, we cannot avoid interpolating at the point $t=1$, which requires a design of higher strength.  The main technical difficulty in this setting is proving positive definiteness of the Hermite interpolating polynomial $h$. We take this approach  to Theorem \ref{thm:tight} and carry out the technicalities in  Sections \ref{subsec:3.2}--\ref{subsec:tight}.

If a suitable candidate is not available, one can still rely on part \eqref{lp1} of Lemma \ref{lem:lp}  and attempt to   optimize the value of the energy $I_h (\sigma)$  over auxiliary positive definite polynomials $h$, obtaining a lower bound for the energy over all probability measures.  If the degree of an auxiliary function $h$ is bounded by $D$, we have $D+1$ non-negative variables $\widehat{h}_i$, $0\leq i\leq D$, and infinitely many linear constraints $h(t)\leq f(t)$ for all $t\in [-1,1]$. In order to get the best possible lower bound, we need to maximize $\widehat{h}_0$ given these linear conditions.

 This problem is, generally, intractable as a linear optimization problem. However, when $f$ is a polynomial, the condition $f(t)-h(t)\geq 0$ for all $t\in [-1,1]$ may be represented as a finite-size positive semi-definite constraint on the coefficients $\widehat{h}_i$. In particular, the polynomial inequality may be rewritten as a sum-of-squares optimization problem (see, for instance, \cite{nesterov2000squared}) and thus solved as a semi-definite program.

By using sum-of-squares optimization described above, we obtain  lower bounds on  the $p$-frame energies over measures on projective spaces when $p$ is an odd integer. A table of such bounds for real projective spaces $\RP^{d-1}$, $3\leq d \leq 24$, and $p=3, 5, 7$, is shown in Table~\ref{table:numlp} in the Appendix. The concrete bounds are computed by a series of steps. For the first step, we  fix the degree $D$ of the auxiliary polynomial and solve the sum-of-squares problem. The numerical solver outputs a polynomial which is feasible up to a small tolerance. By rounding coefficients, it is then possible to obtain polynomials which are less than $f$ and positive definite.

Since the choice of the maximal degree $D$ is arbitrary, not much is lost by rounding, and our bounds in the appendix are thus rounded down to four significant figures. The last condition $f-h\geq 0$ can be checked using interval arithmetic, or by hand. We include the coefficients of the auxiliary polynomials  in the supplementary files of the arXiv version of this paper. The polynomials used for $p=3$ and $p=5$ are of degree $D=6$, while the polynomials for $p=7$ are of degree $D=8$.

It is interesting to compare the values of conjectured energy minimizers with the lower bounds obtained using the approach above. We make comparison of these bounds in Table~\ref{table:compare} below for all conjectured optimizers from Tables \ref{table:real} and \ref{table:complex}: observe that the values are indeed close, which motivates our conjectures about the minimizers.  Tight designs are excluded from this table since for them the lower and the upper bounds coincide as we will show below in Theorem \ref{thm:tight}. 
 
\begin{table}[h]
    \begin{center}
        \caption{Comparison of $p$-frame energies for conjectured optimal configurations on $\RP^{d-1}$ and $\CP^{d-1}$ with LP lower bounds. Energies are evaluated at the odd integer midpoint of the conjectured optimality interval.}
        \begin{tabular}{ c  c  c  c  c   c }\label{table:compare}
            $d$ & $\F$ & Energy & LP bound & $p$ & Name \vspace{1 mm} \\ \hline 
            \rule{0pt}{3ex} $3$ & $\R$ & $0.1249$ & $0.1248$ & $7$ & icosahedron and dodecahedron \vspace{1 mm} \\ \vspace{1 mm}
            $4$ & $\R$ & $0.09628$ & $0.09607$ & $5$ & $D_4$ root vectors \\ \vspace{1 mm}
            $5$ & $\R$ & $0.1183$ & $0.1170$ & $3$ & hemicube \\ \vspace{1 mm}
            $5$ & $\R$ & $0.06184$ & $0.06169$ & $5$ & Stroud design \\ \vspace{1 mm}
            $6$ & $\R$ & $0.09056$ & $0.08970$ & $3$ & cross-polytope and hemicube \\ \vspace{1 mm}
            $6$ & $\R$ & $0.04249$ & $0.04240$ & $5$ & $E_6$ and $E_6^{*}$ roots \\ \vspace{1 mm}
            $7$ & $\R$ & $0.03065$ & $0.03060$ & $5$ & $E_7$ and $E_7^{*}$ roots \\ \vspace{1 mm}
            $8$ & $\R$ & $0.05910$ & $0.05852$ & $3$ & mid-edges of regular simplex \vspace{1 mm} \\ \hline
            \rule{0pt}{3ex} $3$ & $\C$ &	$0.01261$& 0.01258 & $5$ & union equiangular lines	\\ \vspace{1 mm}
            $5$ & $\C$ &	$0.04200$& 0.04184 & $5$ & $O_{10}$ and $W(K_5)$ minimal vectors	\\ 
        \end{tabular}
    \end{center}
\end{table}

\subsection{Properties of orthogonal polynomials}\label{subsec:3.2}
Recall that, for fixed $ \Omega $, we write simply
$C_n(t) = C_{n}^{(\alpha, \beta)}(t)$ with $ C_n(1) =1 $. In some of the arguments in Section~\ref{subsec:tight} we will instead use the monic polynomials proportional to $ C_n $; we therefore introduce notation $ Q_n(t) = Q_n^{(\alpha, \beta)}(t) $ for these Jacobi polynomials.  

In this subsection we collect several results about orthogonal polynomials relevant to the proof of our main theorem. 
Fix a space $ \Omega $, and let $ \alpha $ and $ \beta $ be the corresponding parameters of the associated Jacobi polynomials. According to Proposition \ref{prop:pdmin}, a function being positive definite on $\Omega$  is equivalent to having positive coefficients in the Jacobi expansion in terms $ Q_n^{(\alpha, \beta)} $. 

It will be  useful to consider \textit{adjacent} Jacobi polynomials, defined as one of the three sequences $ Q^{k,l}_n = Q_n^{(\alpha+k, \beta+l)} $ with $ k,l \in \{ 0,1 \} $, $ k+l >0 $. Specifically, we will need the following corollary which comes out of representing $ Q^{1,0}_n $ through $ Q^{0,0}_n $ \cite[equation (3.4)]{levenshteinDesigns1992}:
\begin{proposition}
    \label{prop:left_adjacent}
    Adjacent Jacobi polynomials $ Q_n^{1, 0} $ are positive definite on $\Omega$.
\end{proposition}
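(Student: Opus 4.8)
The plan is to reduce the claim to a known structural relation expressing the adjacent Jacobi polynomial $Q_n^{1,0} = Q_n^{(\alpha+1,\beta)}$ as a positive linear combination of the polynomials $Q_k^{0,0} = Q_k^{(\alpha,\beta)}$ with $0 \le k \le n$. Once such a representation $Q_n^{1,0}(t) = \sum_{k=0}^n a_{n,k}\, Q_k^{0,0}(t)$ with all $a_{n,k} \ge 0$ is in hand, Proposition~\ref{prop:schon} (equivalently, the characterization of positive definiteness on $\Omega$ via nonnegativity of Jacobi coefficients recalled in Section~\ref{subsec:3.2}) immediately gives that $Q_n^{1,0}$ is positive definite on $\Omega$, since a nonnegative combination of the positive definite kernels $C_k$ is positive definite, and $Q_k^{0,0}$ is a positive multiple of $C_k$. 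So the entire content is the nonnegativity of the connection coefficients $a_{n,k}$.

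First I would recall the relevant identity. As indicated in the excerpt, equation (3.4) of \cite{levenshteinDesigns1992} expresses $Q_n^{1,0}$ in terms of the $Q_k^{0,0}$; the standard form of this connection formula for Jacobi polynomials (see also \cite[Ch.~IV]{szego1975orth}) is
\begin{equation*}
Q_n^{(\alpha+1,\beta)}(t) = \sum_{k=0}^{n} c_{n,k}\, Q_k^{(\alpha,\beta)}(t),
\end{equation*}
and the key point is that the coefficients $c_{n,k}$ are strictly positive for the admissible ranges of $\alpha,\beta$ (here $\alpha > -1$, $\beta > -1$, which holds for every space $\Omega$ under consideration by \eqref{eq:jacobibeta}). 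Concretely, one can verify positivity either by invoking the explicit product formula for $c_{n,k}$ in terms of Gamma functions and Pochhammer symbols — every factor is then manifestly positive in the parameter range — or, more structurally, by using the classical fact that $(1-t)\,Q_k^{(\alpha+1,\beta)}$ is a positive combination of $Q_{k}^{(\alpha,\beta)}$ and $Q_{k+1}^{(\alpha,\beta)}$ and iterating, keeping track of signs. I would present the computation in the monic normalization to avoid carrying around the $C_n(1)=1$ renormalization constants, then translate back at the end, noting that the renormalization only multiplies each coefficient by a positive scalar and hence preserves signs.

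The one step that requires care — the main obstacle — is confirming that the sign of $c_{n,k}$ is genuinely nonnegative across \emph{all} the relevant pairs $(\alpha,\beta)$, i.e.\ for the spheres ($\alpha=\beta=(d-3)/2$) and all the projective cases, rather than just in a generic situation. This is where one must be slightly attentive: connection coefficients between Jacobi families with shifted parameters are not positive for arbitrary parameter shifts, so I would explicitly check that the \emph{particular} shift $\alpha \to \alpha+1$ with $\beta$ fixed is one of the sign-definite cases, citing the relevant monotonicity/positivity results (e.g.\ \cite{szego1975orth} or the computations in \cite{levenshteinDesigns1992}). Assuming that positivity — which is exactly what equation (3.4) of \cite{levenshteinDesigns1992} is set up to give — the proposition follows in a line. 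I would therefore keep the proof short: cite the connection formula, observe the coefficients are nonnegative in our parameter range, and conclude positive definiteness via Proposition~\ref{prop:schon}.
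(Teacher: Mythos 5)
Your proposal is correct and is essentially the paper's own argument: the paper states this proposition as an immediate corollary of the representation of $Q_n^{1,0}$ through the $Q_k^{0,0}$ given in equation (3.4) of \cite{levenshteinDesigns1992}, whose connection coefficients are nonnegative, and then positive definiteness follows from Proposition~\ref{prop:schon}. Your added care about verifying the sign of the connection coefficients for the specific shift $\alpha\to\alpha+1$ across all admissible $(\alpha,\beta)$ is exactly what the citation is meant to supply, so nothing further is needed.
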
 
On the other hand, adjacent polynomials $ Q^{1,1}_n $, defined as orthogonal with respect to the measure $ (1-t^2)\,d\nu^{(\alpha, \beta)} $, are not positive definite. The following property, a special case of the strengthened Krein condition \cite[Lemma 3.22]{levenshteinUniversal1998}, can serve as a substitute.
\begin{lemma}
    \label{lem:Krein}
$(t+1) Q^{1,1}_n(t) $ are positive definite on $\Omega$  for $ n \geq 0 $.
\end{lemma}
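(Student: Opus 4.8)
The plan is to deduce Lemma~\ref{lem:Krein} from the classical connection formula expressing the Jacobi polynomial $Q^{1,1}_n = Q_n^{(\alpha+1,\beta+1)}$ (orthogonal with respect to $(1-t^2)\,d\nu^{(\alpha,\beta)}$) in terms of the polynomials $Q_n^{(\alpha,\beta)}$ themselves. The key identity, which is the derivative relation for Jacobi polynomials, states that $(1-t^2)Q_n^{(\alpha+1,\beta+1)}(t)$ is a combination of $Q_{n}^{(\alpha,\beta)}(t)$, $Q_{n+1}^{(\alpha,\beta)}(t)$, $Q_{n+2}^{(\alpha,\beta)}(t)$ with specified signs. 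Dividing by $1-t$ (which is legitimate since $(1-t)$ divides the left side after we factor appropriately, or equivalently working with $(1+t)Q_n^{(\alpha+1,\beta+1)}$ directly via the ``left adjacent'' type relation in \cite[Section 3]{levenshteinDesigns1992}), one obtains an expansion of $(1+t)Q_n^{1,1}(t)$ in the basis $\{Q_m^{(\alpha,\beta)}\}$, and the claim reduces to checking that all the coefficients in this expansion are nonnegative. By Proposition~\ref{prop:schon} together with the normalization remarks after \eqref{eq:jacobibeta}, nonnegativity of these coefficients is exactly positive definiteness on $\Omega$.

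Concretely, first I would recall the three-term type connection formula from \cite[Lemma 3.22]{levenshteinUniversal1998} (the strengthened Krein condition): it asserts that for the adjacent parameters there exist constants so that
\[
(t+1)Q_n^{1,1}(t) = a_n Q_{n}^{1,0}(t) + b_n Q_{n+1}^{1,0}(t),
\]
with $a_n, b_n \ge 0$, where $Q_m^{1,0} = Q_m^{(\alpha+1,\beta)}$ are the left-adjacent polynomials. Second, I would invoke Proposition~\ref{prop:left_adjacent}, which tells us that each $Q_m^{1,0}$ is itself positive definite on $\Omega$. Third, since a nonnegative linear combination of positive definite functions is positive definite (this is immediate from Definition~\ref{def:pd} or from Proposition~\ref{prop:schon}, as the Jacobi coefficients add), I conclude that $(t+1)Q_n^{1,1}(t)$ is positive definite on $\Omega$ for every $n \ge 0$. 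For $n=0$ one can check the base case directly: $Q_0^{1,1}$ is constant, so $(t+1)Q_0^{1,1}(t)$ is a positive multiple of $1+t$, which was shown to be positive definite in Section~\ref{sec:energies}.

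The main obstacle is locating and correctly normalizing the connection coefficients $a_n, b_n$ and verifying that they are genuinely nonnegative for the specific parameter ranges $\alpha = (d-1)\dim_{\R}(\F)/2 - 1$ and $\beta$ as in \eqref{eq:jacobibeta} arising from our spaces $\Omega$ — the signs of connection coefficients between Jacobi families with shifted parameters are sensitive to which normalization ($C_n(1)=1$ versus monic $Q_n$) one uses. Since \cite[Lemma 3.22]{levenshteinUniversal1998} already packages exactly this statement for the relevant family (Levenshtein works precisely in the coding-theoretic normalization on two-point homogeneous spaces), the cleanest route is simply to quote it rather than re-derive the coefficients; the only thing left to check by hand is the $n=0$ case and the observation that $\alpha+1>0$, $\beta+1>0$ hold in all our cases, which guarantees the non-degeneracy of the Jacobi measure and hence that the cited lemma applies verbatim.
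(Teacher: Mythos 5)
Your proposal is correct and follows essentially the same route as the paper: write $(t+1)Q_n^{1,1}$ as a nonnegative combination of the left-adjacent polynomials $Q_n^{1,0}$ and $Q_{n+1}^{1,0}$ and invoke Proposition~\ref{prop:left_adjacent}. The only difference is that where you defer the sign of the connection coefficients to \cite[Lemma 3.22]{levenshteinUniversal1998}, the paper derives it in two lines: both sides are monic of degree $n+1$ and orthogonal to all lower-degree polynomials with respect to $d\nu^{(\alpha+1,\beta)}$, which forces the two-term expansion $(t+1)Q_n^{1,1}=Q_{n+1}^{1,0}+b\,Q_n^{1,0}$, and evaluating at $t=-1$ (where the left side vanishes and $\operatorname{sgn}Q_m^{1,0}(-1)=(-1)^m$ since all roots lie in $(-1,1)$) yields $b\geq 0$.
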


\begin{proof}
For all $n \in \N_0$, $(t+1)Q^{1,1}_n$ is orthogonal to all polynomials of degree less than $n$ with respect to the measure $(1-t) d\nu^{(\alpha, \beta)} =c_{\alpha,\beta} d\nu^{(\alpha+1, \beta)} $, so it can be expressed through the orthogonal polynomials corresponding to $ d\nu^{(\alpha+1, \beta)} $ as
$$(t+1)Q^{1,1}_n(t) = Q^{1, 0}_{n+1}(t) + b Q^{1, 0}_n(t),$$
for some constant $b$. Since all the roots of $ Q^{1, 0}_{n} $ lie in $ (-1,1) $,  $\text{sgn\,} Q^{1, 0}_{n}(-1) = (-1)^n$. Substituting $ t = -1 $ in the last equation gives $ Q^{1, 0}_{n+1}(-1) + b Q^{1, 0}_n(-1) = 0$, and so $b \geq 0$. By Proposition~\ref{prop:left_adjacent}, each $ Q^{1, 0}_n(t) $ is positive definite, and thus $(t+1)Q^{1,1}_n(t)$ is also positive definite. \end{proof}

Lastly, we will need the strict positive-definiteness of polynomials annihilated by subsets of roots of $ p_n + \gamma p_{n-1} $. We recall the following result.
\begin{proposition}[{\cite[Theorem 3.1]{cohnUni2007}}]
    \label{prop:subproducts}
    Consider  a sequence of orthogonal monic polynomials $ p_0(t),$  $ p_1(t), $  $ p_2(t), \ldots $, such that $\deg{p_k} = k$ for all $k \in \mathbb{N}_0$, and let $ t_1 <\ldots < t_n $ be the zeros of $ p_n + \gamma p_{n-1} $ for some fixed $ \gamma $. Then the polynomials
\[ 
    \prod_{i=1}^k (t-t_i), \qquad 1 \leq k < n,
\]
can be represented as a linear combination of $  p_0(t), p_1(t),\ldots, p_n(t) $ with positive coefficients.  
\end{proposition}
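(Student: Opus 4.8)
The plan is to realize the Jacobi coefficients of $\pi_k(t):=\prod_{i=1}^{k}(t-t_i)$ as the values of a positive Gauss-type quadrature rule and then read off their signs. The only non-algebraic input is the quadrature attached to $p_n+\gamma p_{n-1}$. Since that polynomial is quasi-orthogonal of order one — it is orthogonal to every polynomial of degree $\le n-2$ — its $n$ zeros $t_1<\cdots<t_n$ are real and simple. Let $\int q\,d\mu=\sum_{i=1}^{n}w_i\,q(t_i)$ denote the interpolatory rule with these nodes; it is exact for every $q$ with $\deg q\le 2n-2$, since writing $q=(p_n+\gamma p_{n-1})s+r$ with $\deg s\le n-2$ and $\deg r\le n-1$ kills the $s$-term on the left by orthogonality and on the right because the nodes are zeros of $p_n+\gamma p_{n-1}$. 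Moreover the weights are positive: $w_i=\int\ell_i^2\,d\mu>0$, where $\ell_i$ is the degree-$(n-1)$ Lagrange fundamental polynomial at $t_i$ and $\deg\ell_i^2=2n-2$.

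Now fix $1\le k\le n-1$ and write $\pi_k=\sum_{j=0}^{k}c_j\,p_j$. The leading coefficient gives $c_k>0$ at once. For $j<k$ we have $c_j=\|p_j\|^{-2}\int\pi_k p_j\,d\mu$, and since $\deg(\pi_k p_j)=k+j\le 2n-2$ the quadrature applies:
\[
c_j\,\|p_j\|^2=\sum_{i=1}^{n}w_i\,\pi_k(t_i)\,p_j(t_i)=\sum_{i=k+1}^{n}w_i\,\pi_k(t_i)\,p_j(t_i),
\]
because $\pi_k$ vanishes at $t_1,\dots,t_k$; and for $i>k$ one has $\pi_k(t_i)=\prod_{l=1}^{k}(t_i-t_l)>0$, so every ``weight'' $u_i:=w_i\,\pi_k(t_i)$ of this reduced sum is strictly positive. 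The claim thus reduces to the inequality $\sum_{i>k}u_i\,p_j(t_i)\ge 0$ for $0\le j\le k$, and to $>0$ for the strict positive-definiteness asserted.

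This last inequality is the heart of the matter and the genuine obstacle, since $p_j$ does change sign on $[-1,1]$, so the surviving summands need not be individually nonnegative. The key structural facts: the reduced rule $q\mapsto\sum_{i>k}u_i\,q(t_i)$ has $n-k$ nodes and positive weights, and — because $\pi_k(t)\prod_{i>k}(t-t_i)=a_n^{-1}(p_n+\gamma p_{n-1})$ is orthogonal to all polynomials of degree $\le n-2$ — it reproduces $\int\pi_k q\,d\mu$ for $\deg q\le 2(n-k)-1$, i.e.\ it is precisely the Gauss rule of the signed functional $q\mapsto\int\pi_k q\,d\mu$, whose degree-$(n-k)$ orthogonal polynomial is $\prod_{i>k}(t-t_i)$. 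One then controls the values $p_j(t_{k+1}),\dots,p_j(t_n)$ through the interlacing of the zeros of $p_j$ with those of $p_n+\gamma p_{n-1}$: since $j\le k$, the largest nodes carry the bulk of the mass $u_i$ and the contribution of the top node is decisive — indeed $(p_n+\gamma p_{n-1})(\xi)=p_n(\xi)<0$ at the largest zero $\xi$ of $p_{n-1}$, independently of $\gamma$, forcing a zero of $p_n+\gamma p_{n-1}$ strictly to the right of $\xi$, so $t_n$ lies strictly to the right of every zero of $p_j$ and the term $u_n\,p_j(t_n)>0$ is never cancelled. Carrying out this sign count along the lines of \cite[Theorem 3.1]{cohnUni2007} yields $c_j>0$ for all $0\le j\le k$, which proves the proposition.
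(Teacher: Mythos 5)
The paper itself offers no proof of this proposition: it is quoted directly from \cite{cohnUni2007} (their Theorem~3.1), so the only question is whether your argument stands on its own. It does not. The first half is correct and standard: the zeros of the quasi-orthogonal polynomial $p_n+\gamma p_{n-1}$ carry an interpolatory quadrature rule with positive weights $w_i$ that is exact to degree $2n-2$, and this legitimately reduces the positivity of the coefficient $c_j$ in $\pi_k=\sum_{j\le k}c_j p_j$ (for $j<k$) to the inequality $\sum_{i=k+1}^{n} w_i\,\pi_k(t_i)\,p_j(t_i)>0$. But that inequality \emph{is} the theorem. You label it ``the heart of the matter and the genuine obstacle'' and then do not prove it: for interior nodes $t_i$ with $k<i<n$ the values $p_j(t_i)$ can be negative, and nothing in your final paragraph bounds the total negative contribution by the positive one. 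The claims that ``the largest nodes carry the bulk of the mass'' and that the top term $u_n p_j(t_n)>0$ ``is never cancelled'' are unsubstantiated heuristics, and the closing appeal to ``carrying out this sign count along the lines of [CK, Theorem 3.1]'' defers the missing step to the very result being proved, which is circular. (The pieces you do verify --- that the reduced rule is the Gauss rule of the signed functional $q\mapsto\int \pi_k q\,d\mu$, and that $t_n$ lies strictly to the right of every zero of every $p_j$ with $j\le n-1$ --- are correct but insufficient.)

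A concrete way to see what is still needed: for $k=n-1$ the result does follow cleanly from your setup via the Christoffel--Darboux formula. Writing $h_j=\int p_j^2\,d\mu$ and using $p_n(t_n)=-\gamma p_{n-1}(t_n)$, one gets
\[
\prod_{i=1}^{n-1}(t-t_i)\;=\;\frac{h_{n-1}}{p_{n-1}(t_n)}\sum_{j=0}^{n-1}\frac{p_j(t_n)}{h_j}\,p_j(t),
\]
and all coefficients are positive precisely because of your observation that $t_n$ exceeds the largest zero of $p_{n-1}$ (hence, by interlacing, of every $p_j$). The genuine difficulty is descending from $k=n-1$ to general $k<n-1$, where the product $\prod_{i>k}(t-t_i)$ has several factors and the surviving quadrature nodes are no longer all to the right of the zeros of $p_j$; this descent is the actual content of the cited proof and is absent from your writeup. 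As it stands, the proposal is a correct reduction plus a plan, not a proof.
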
 

\subsection{Hermite interpolation}

Let $f \in C^{K}[a,b]$, for some $K \in \N_0 $, and let a collection $t_1 < \ldots < t_m \subset [a,b]$, as well as positive integers $k_1, \ldots, k_m$ be given with $$ \max \{ k_1,\ldots, k_m \} \leq K+1.$$
There exists a polynomial $ p $ of degree less than $D = \sum_{i=1}^{m} k_i$, such that for $1 \leq i \leq m$ and $0 \leq k < k_i$,
$$p^{(k)}(t_i) = f^{(k)}(t_i).$$ 
Such a $ p $ is called the \textit{Hermite interpolating polynomial} of $ f $; it always exists and is unique because the linear map that takes a polynomial $p$ of degree less than $D$ to 
$$ (p(t_1), p'(t_1), \ldots, p^{(k_1 - 1)}(t_1), p(t_2), p'(t_2), \ldots, p^{k_m -1}(t_m))$$
is bijective.

It is convenient to organize both the collection $t_1 < \ldots < t_m$ and the orders of derivatives $k_1, \ldots, k_m$ into a polynomial $ g (t)  $.
Given such a polynomial 
$$g(t) = \prod_{i=1}^m (t - t_i)^{k_i} , $$
where $ D = \deg(g) \geq 1$, we write $H\left[f,g\right]$ for the interpolating polynomial of degree less than $ D $ that agrees with $f$ at each $ t_i $ to the order $ k_i $. Similarly, we let
$$ Q[f,g](t) = \frac{f(t) - H\left[f,g\right](t)}{g(t)},$$
be the \textit{divided difference} associated with the polynomial $ g $.  
Under the above hypotheses, for every $t \in [a,b]$ and a collection $ t_1 < t_2 < \ldots < t_m  $ as above, there exists $\xi \in (a,b)$ such that $ \min(t, t_1) < \xi < \max(t, t_m)$, and 
\begin{equation}
    \label{eq:div_difference}
    Q[f,g](t)  =  \frac{f^{(D)}( \xi)}{D!}.
\end{equation}

Enumerate the roots of $ g $ with multiplicities in increasing order, and denote these by $ s_{j}, 1\leq j \leq D  $, where $ s_j \leq s_{j+1} $. Let $ g_n $ be the polynomial annihilated on the first $ n $  elements of the  sequence $ s_1,\ldots, s_D $:
\[
    g_n(t) = \prod_{j=1}^n (t - s_j), \qquad 1 \leq  n \leq D.
\]
The usual assignment of the empty product applies here: $ g_0(t) = 1 $.

By the Newton's formula \cite[Chapter 4.6--7]{devoreConstructive1993}, the Hermite interpolating polynomial $ H\left[f,g\right] $  can be represented as 
\begin{equation}
    \label{eq:newton_formula}
    H\left[f,g\right](t) = f(s_1) + \sum_{j=1}^{D-1} g_{j}(t)\, Q[f, g_j](s_{j+1}).
\end{equation} 
The relevant property of the $ p $-frame kernel $ \left(\frac{s+1}2 \right)^{p/2} $ considered on a projective space $ \FP^{d-1} $ (for $\mathbb{F} = \mathbb{R}, \mathbb{C}, \mathbb{H}$), is that its first several derivatives are nonnegative on $ (-1, 1) $, followed by a negative one. Positivity of the derivatives implies, due to \eqref{eq:div_difference}, that the divided differences in  formula \eqref{eq:newton_formula} for the $ p $-frame kernel are nonnegative. It will be convenient to introduce notation for this number of nonnegative derivatives of a function.  

\begin{definition}
Let $f \in C^{M}(a,b) $. We say that $f$ is \textbf{absolutely monotonic of degree $M$} if $f^{(k)}(t) \geq 0$ for $0 \leq k \leq M$ and $t \in (a,b)$. If these derivatives are positive, we say that $F$ is \textbf{strictly absolutely monotonic of degree $M$}.
\end{definition}

 The usefulness of this new class of functions lies in that the Hermite interpolant of an absolutely monotonic function $ f $ of degree $ M $ with $ (M+1) $st derivative negative, will stay below $ f$, as shown in the following observation \cite{yudinMin1992}.

\begin{lemma}\label{lem:remaind}
    Let $ f:[-1,1] \to \mathbb R $ be absolutely monotonic of degree $ M $, and $ f^{(M+1)}(t) \leq 0 $ for all $ t \in (-1, 1) $. If the roots of a polynomial $ g $ of degree $ M+1 $ are contained in $ [-1, 1] $, and in addition $ g(t) \leq 0 $ for $ t \in [-1, 1] $, then, 
    \[
        f(t) \geq H[f,g](t), \qquad t \in [-1, 1].
    \]
\end{lemma}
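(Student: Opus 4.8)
The plan is to analyze the sign of the remainder $f(t) - H[f,g](t)$ by combining the divided-difference error formula \eqref{eq:div_difference} with the assumptions on the derivatives of $f$ and on the polynomial $g$. First I would note that by hypothesis $g$ has degree $M+1$, all its roots lie in $[-1,1]$, and $g(t) \le 0$ on $[-1,1]$; write $g(t) = \prod_{j=1}^{M+1}(t - s_j)$ with $s_1 \le \cdots \le s_{M+1}$ the roots listed with multiplicity. For a fixed $t \in [-1,1]$, if $t$ coincides with one of the $s_j$ then $f(t) = H[f,g](t)$ and the inequality is trivial, so assume $t$ is not a root of $g$.

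Next I would invoke \eqref{eq:div_difference} with $D = M+1$: there exists $\xi \in (-1,1)$ (with $\min(t,s_1) < \xi < \max(t,s_{M+1})$) such that
\[
    Q[f,g](t) = \frac{f(t) - H[f,g](t)}{g(t)} = \frac{f^{(M+1)}(\xi)}{(M+1)!}.
\]
Since $f$ is absolutely monotonic of degree $M$ with $f^{(M+1)} \le 0$ on $(-1,1)$, the right-hand side is $\le 0$; hence $f(t) - H[f,g](t)$ and $g(t)$ have opposite signs (or the difference is zero). Because $g(t) \le 0$ on $[-1,1]$, we conclude $f(t) - H[f,g](t) \ge 0$, which is the claim. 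One should also address the boundary/edge cases: if $g(t) = 0$ at some $t \in [-1,1]$ that is not forced to be a root listed above — impossible, since $g(t)=0$ exactly at the $s_j$ — and the situation where $f$ is only assumed $C^M$ on the open interval but the interpolation and error formula need $f \in C^{M+1}$; here I would remark that the $p$-frame kernel is smooth on $(-1,1)$ and $H[f,g]$ only interpolates at interior points (or, if $t = \pm 1$ is among the nodes, one uses one-sided derivatives and continuity up to the boundary, which holds for the kernels under consideration).

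The argument is essentially a one-line consequence of the error formula once the signs are tracked, so there is no serious obstacle; the only point requiring a little care is making sure \eqref{eq:div_difference} applies, i.e. that $f$ has enough derivatives on the relevant interval and that $\xi$ genuinely lies in $(-1,1)$ rather than at an endpoint. I would therefore state explicitly at the start that we work under the standing regularity of the $p$-frame kernel (smooth on $(-1,1)$, continuous on $[-1,1]$), so that \eqref{eq:div_difference} is valid for every $t \in [-1,1]$, and then the sign chase above completes the proof.
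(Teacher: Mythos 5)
Your proposal is correct and follows the paper's own argument exactly: both apply the divided-difference error formula \eqref{eq:div_difference} to write $f(t) - H[f,g](t) = \frac{f^{(M+1)}(\xi)}{(M+1)!}\, g(t)$ and conclude nonnegativity from the signs of $f^{(M+1)}$ and $g$. Your additional remarks on the edge cases (when $t$ is a node, and the regularity needed for the error formula) are sensible but do not change the substance of the argument.
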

\begin{proof}
    \label{lem:hermite}
    According to \eqref{eq:div_difference}, there exists $\xi \in (-1,1)$ such that $ \min(t, t_0) < \xi < \max(t, t_M)$, where the roots of $ g $ are $ t_0 \leq \ldots \leq t_M $, and 
    \[
        f(t) - H[f,g](t) = \frac{f^{(M+1)}( \xi)}{(M+1)!} g(t) .
    \]
    The expression on the right is nonnegative, so the conclusion of the lemma follows. \end{proof}

\subsection{Optimality of tight designs}
\label{subsec:tight}

As above, $\Omega$ is a compact, connected two-point homogeneous space and $Q_0, Q_1, Q_2, \ldots$ are the corresponding orthogonal polynomials. Recall that $ Q_n $ are orthogonal with respect to the measure $d\nu^{(\alpha, \beta)}  =  \frac1{\gamma_{\alpha,\beta}} (1-t)^{\alpha} (1+t)^{\beta} dt$, where the parameters $ \alpha $, $ \beta $ are chosen as in Section~\ref{subsec:2point}. The main result of this section is the following.
\begin{theorem}
    \label{thm:tight}
Let $f$ be absolutely monotonic of degree $M$, with $f^{(M+1)}(t) \leq 0$ for $t \in (-1,1)$. Then for a tight $ M $-design $ \mathcal C $,
$$ \mu_{\mathcal C} = \frac{1}{|\mathcal C|} \sum_{x \in \mathcal C} \delta_{x} $$
 is a minimizer of
$$ I_{f}(\mu) = \int_{\Omega} \int_{\Omega} f( \tau(x,y) ) \,d \mu(x) d \mu(y)$$
over $\mathcal P(\Omega)$, the set of probability measures on $\Omega$.
\end{theorem}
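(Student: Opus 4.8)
The strategy is to apply the linear programming framework of Lemma~\ref{lem:lp}\eqref{lp2}: I will construct a positive-definite polynomial $h$ of degree $M$ that lies below $f$ on $[-1,1]$ and agrees with $f$ on the set of inner-product values $\{\tau(x,y): x,y\in\mathcal C\}$ of the tight $M$-design $\mathcal C$. Since a tight $M$-design has exactly $m$ distinct distances when $M=2m$, or $m$ distinct distances plus the diameter pair when $M=2m-1$, the natural candidate for $h$ is the Hermite interpolant $H[f,g]$ where $g$ encodes double roots at each of the interior distance values together with appropriate single or double roots at the endpoints, chosen so that $\deg g = M+1$ and $g(t)\le 0$ on $[-1,1]$. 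Lemma~\ref{lem:remaind} then immediately gives $h\le f$ on $[-1,1]$, and the Hermite conditions at the distance values give $h=f$ there, so everything reduces to proving that $h=H[f,g]$ is positive definite on $\Omega$.

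The positive-definiteness verification is the heart of the argument and where I expect the main obstacle. The plan is to use the Newton formula \eqref{eq:newton_formula}, which expresses $H[f,g]$ as $f(s_1) + \sum_{j} g_j(t)\,Q[f,g_j](s_{j+1})$, where $g_j$ is the product over the first $j$ roots of $g$ in increasing order. Because $f$ is absolutely monotonic of degree $M$ and $\deg g_j \le M$, the divided differences $Q[f,g_j](s_{j+1})$ are all nonnegative by \eqref{eq:div_difference}. Thus it suffices to show that each partial product $g_j(t)$ admits an expansion in the $Q_n$ with nonnegative coefficients. Here one must exploit the precise structure of the distance set of a tight design: the crucial classical fact (Delsarte–Goethals–Seidel / Levenshtein theory) is that the distance values of a tight $M$-design are exactly the roots of a specific Jacobi-type polynomial — namely $Q_m^{(\alpha,\beta)}$, or $Q_{m-1}^{(\alpha+1,\beta)}$, or $(t+1)Q_{m-1}^{(\alpha+1,\beta+1)}$, or $Q_m^{(\alpha,\beta)}+\gamma Q_{m-1}^{(\alpha,\beta)}$ depending on the parity of $M$ and on whether $\mathcal C$ is antipodal. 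I will treat these cases separately, using Proposition~\ref{prop:subproducts} to express the subproducts of such roots as nonnegative combinations of the relevant (possibly adjacent) orthogonal polynomials, then Proposition~\ref{prop:left_adjacent} and Lemma~\ref{lem:Krein} to convert these back into nonnegative combinations of the $Q_n^{(\alpha,\beta)}$ themselves, at which point Proposition~\ref{prop:schon} gives positive definiteness on $\Omega$.

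Concretely, in the simplest case $M=2m$ the distance set is the root set of $Q_m^{(\alpha,\beta)}$; one takes $g(t)=(t-1)\prod_i(t-t_i)^2$ with the $t_i$ those $m$ roots, interpolating to order $2$ at each $t_i$ and order $1$ at $t=1$, so $\deg g = 2m+1 = M+1$ and $g\le 0$ on $[-1,1]$. The subproducts $g_j$ are then, up to nonnegative scalar multiples, either subproducts of the roots of $Q_m^{(\alpha,\beta)}$ (handled directly by Proposition~\ref{prop:subproducts} with $\gamma=0$) or such a subproduct times $(1-t)$ or similar, which requires one application of the adjacent-polynomial machinery. For odd $M=2m-1$, the tight design is antipodal and includes the pair at geodesic distance $\pi$, i.e. $\tau=-1$ is in the distance set; the natural $g$ carries a factor $(t+1)$, and the relevant orthogonal polynomial whose roots give the remaining distances lives in the $(\alpha,\beta+1)$ or $(\alpha+1,\beta+1)$ family, which is precisely what Lemma~\ref{lem:Krein} is designed to handle. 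I expect the bookkeeping of which adjacent family appears in which case, and checking $g(t)\le 0$ on $[-1,1]$ with the correct multiplicity at $\pm 1$, to be the only real friction; once the subproducts are identified as nonnegative combinations of positive-definite polynomials, the rest follows mechanically.

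Finally, having established that $h$ is positive definite, degree $\le M$, satisfies $h\le f$, and $h=f$ on the distance set of the $M$-design $\mathcal C$, Lemma~\ref{lem:lp}\eqref{lp2} applies verbatim (with $k=M$) to conclude that $\mu_{\mathcal C}$ minimizes $I_f$ over $\mathcal P(\Omega)$, completing the proof. I would remark in passing that the strict positivity of the relevant subproduct coefficients, guaranteed by Propositions~\ref{prop:subproducts} and~\ref{prop:left_adjacent}, is what will later yield the uniqueness statement, though that is not needed for the bare minimization claim here.
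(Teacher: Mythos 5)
Your overall architecture coincides with the paper's: both arguments reduce the theorem to Lemma~\ref{lem:lp}\eqref{lp2} by building the Hermite interpolant $H[f,g]$ with $g=(t-1)\prod_i(t-t_i)^2$ in the even case and $g=(t^2-1)w^2(t)$ in the odd (antipodal) case, invoke Lemma~\ref{lem:remaind} for $H[f,g]\le f$, and prove positive definiteness of $H[f,g]$ via the Newton formula, nonnegativity of the divided differences from absolute monotonicity of degree $M$, and positive definiteness of the partial products (Proposition~\ref{prop:subproducts}, Proposition~\ref{prop:left_adjacent}, Lemma~\ref{lem:Krein}, plus Schur's theorem). This is exactly the plan carried out in Propositions~\ref{prop:2m} and~\ref{prop:2m-1}.

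There is, however, one concrete misstep you would hit in the even case $M=2m$: the distance set of a tight $2m$-design is \emph{not} the root set of $Q_m^{(\alpha,\beta)}$, so Proposition~\ref{prop:subproducts} ``with $\gamma=0$'' in the base family does not apply as you state it. For the icosahedron (a tight projective $2$-design in $\RP^{2}$, so $m=1$, $\alpha=0$, $\beta=-1/2$) the single off-diagonal value is $\tau=-3/5$, whereas $Q_1^{(0,-1/2)}$ vanishes at $-1/3$; the distances are in fact roots of the adjacent polynomial $Q_m^{(\alpha+1,\beta)}$, which your list does not contain. The paper sidesteps the classical root-classification entirely: it shows by a two-line computation from the $2m$-design property that the full annihilator including the node $t=1$, namely $g_{m+1}(t)=(t-1)\prod_{i=1}^m(t-t_i)$, is orthogonal to $Q_0,\dots,Q_{m-1}$ and hence equals $Q_{m+1}+\gamma Q_m$ for some (generally nonzero) $\gamma$; Proposition~\ref{prop:subproducts} is then applied at level $m+1$ with that $\gamma$ in the base family $(\alpha,\beta)$, giving positive definiteness of all subproducts $g_k$, $k\le m$, at once. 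A parallel computation in the odd case identifies $w$ with $Q_{m-1}^{(\alpha+1,\beta+1)}$ exactly, after which Lemma~\ref{lem:Krein} absorbs the $(t+1)$ factor as you anticipated. If you replace the appeal to the classical identification by this orthogonality computation, your proof closes; note also that since $t=1$ is the largest node it appears last in the increasing enumeration of roots, so the factor $(t-1)$ never enters the Newton partial products and no ``$(1-t)$ times a subproduct'' terms need to be handled.
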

First of all, we show that this statement implies part \eqref{t2} of Theorem \ref{t:mainsphere}. 

\begin{proof}[Proof of part \eqref{t2} of Theorem \ref{t:mainsphere}]
Recall that, according to \eqref{eq:symmetrized_pkernel}, the $p$-frame energy on $\mathbb S_{\mathbb F}^{d-1}$ corresponds to the kernel   $f(t)=(\frac{1+t}{2})^{\frac{p}{2}}$  in the projective setting $\Omega = \FP^{d-1}$.  One can easily check that  $f^{(\lceil p/2 \rceil+1)}(t)\le 0$, $-1<t<1$, and all derivatives of smaller order are nonnegative. Thus  Theorem \ref{thm:tight} applies with $M = \lceil p/2 \rceil$, i.e.  tight projective $M$-designs minimize $I_f$ on $\FP^{d-1}$  for $2M - 2 < p \le M$ (the case $p=2M-2$ is easy, since $f$ is a positive definite polynomial, so $\sigma$ is a minimizer and hence so are tight designs). Transferring the problem back to the sphere $\mathbb S_{\mathbb F}^{d-1}$, as explained in Section \ref{subsec:antip}, finishes the proof of part \eqref{t2} of Theorem \ref{t:mainsphere}.
\end{proof}

In what follows we give a proof of Theorem \ref{thm:tight}, splitting it into two separate cases, depending on whether the design $ \mathcal C $ contains two points separated by the diameter of $ \Omega $; equivalently, depending on the parity of the strength $ M $ of $\mathcal C$.

\begin{proposition}
    \label{prop:2m}
    Theorem~\ref{thm:tight} holds when $ M = 2m $, $ m \geq 1 $.
\end{proposition}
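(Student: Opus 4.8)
The plan is to apply part \eqref{lp2} of Lemma~\ref{lem:lp}, with the candidate configuration $\mathcal C$ being the given tight $2m$-design and the auxiliary polynomial $h$ being a suitable Hermite interpolant of $f$. Since $M = 2m$, Definition~\ref{d:tight}(ii) tells us that $\mathcal C$ has exactly $m$ distinct inner-product values $\tau(x,y)$, $x\neq y\in\mathcal C$; call them $t_1 < \cdots < t_m$, none of which equals the diameter value $-1$. The natural choice is to interpolate $f$ to first order at each $t_i$ and to zeroth order (value only) at $t=1$, so that $h$ agrees with $f$ on the full set $\{\tau(x,y): x,y\in\mathcal C\} = \{t_1,\dots,t_m,1\}$ as required by Lemma~\ref{lem:lp}\eqref{lp2}. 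Encoding this by the polynomial
$$ g(t) = (t-1)\prod_{i=1}^m (t-t_i)^2, $$
which has degree $D = 2m+1 = M+1$, we set $h = H[f,g]$. The degree of $h$ is then at most $M = 2m$, and a tight $2m$-design is in particular a $2m$-design, so the design hypothesis of Lemma~\ref{lem:lp}\eqref{lp2} is satisfied for $h$.

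Next I would verify the two remaining hypotheses of the lemma: that $h \leq f$ on $[-1,1]$, and that $h$ is positive definite on $\Omega$. For the inequality $h\le f$, I would invoke Lemma~\ref{lem:remaind}: the roots of $g$ all lie in $[-1,1]$, $\deg g = M+1$, and $g(t) \le 0$ on $[-1,1]$ because the single factor $(t-1)$ is nonpositive there while $\prod_i (t-t_i)^2 \ge 0$; together with $f$ being absolutely monotonic of degree $M$ with $f^{(M+1)}\le 0$, this gives $f \ge H[f,g] = h$ on $[-1,1]$, with equality exactly at the interpolation nodes.

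The main obstacle is establishing that $h$ is positive definite, i.e. that all Jacobi coefficients $\widehat h_n$ are nonnegative. Here I would use the Newton formula \eqref{eq:newton_formula}: writing the roots of $g$ with multiplicity in increasing order as $s_1 \le \cdots \le s_{D}$ and setting $g_n(t) = \prod_{j=1}^n(t-s_j)$, we have
$$ h(t) = H[f,g](t) = f(s_1) + \sum_{j=1}^{D-1} g_j(t)\, Q[f,g_j](s_{j+1}). $$
By \eqref{eq:div_difference} and absolute monotonicity of degree $M$ (noting each $g_j$ with $j \le D-1 = M$ has degree $\le M$, so the divided difference is $f^{(j)}(\xi)/j!$ with $j \le M$, hence nonnegative), every coefficient $Q[f,g_j](s_{j+1})$ is $\ge 0$, and $f(s_1)\ge 0$. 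So it suffices to show each subproduct $g_j(t)$, $1\le j\le M$, is positive definite on $\Omega$. The order in which the roots $s_1,\dots,s_D$ are enumerated is crucial: the multiplicity-two roots $t_1,\dots,t_m$ together with the simple root $1$ should be interleaved so that each initial segment is of the form $\{$some simple roots of a polynomial $Q_{n}^{1,0} + \gamma Q_{n-1}^{1,0}$, possibly times $(t-1)\}$, or more precisely so that $g_j$ factors as $(t+1)^{\varepsilon}$ (with $\varepsilon\in\{0\}$ here since $-1$ is not a node) times a product handled by Proposition~\ref{prop:subproducts}, or as $(t-1)$ times such a product --- the latter matched against adjacent polynomials $Q_n^{1,1}$ via Lemma~\ref{lem:Krein}, using that $(t-1)Q_n^{1,1}(t)$ type expressions or rather $(1-t)$-weighted ones are positive definite. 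Concretely, I expect to split into the cases where the last appended root before $s_{j+1}$ is $1$ versus an interior $t_i$, and to argue that the relevant $t_i$'s are precisely zeros of $Q_m^{1,1}$ (or $Q_m^{0,0}$), which is the defining linear-programming property of a tight $2m$-design; then Proposition~\ref{prop:subproducts} applied to the adjacent family, combined with Proposition~\ref{prop:left_adjacent} and Lemma~\ref{lem:Krein}, yields positive definiteness of every $g_j$. Once positive definiteness of $h$ is in hand, Lemma~\ref{lem:lp}\eqref{lp2} immediately gives that $\mu_{\mathcal C}$ minimizes $I_f$ over $\mathcal P(\Omega)$, completing the proof of Proposition~\ref{prop:2m}.
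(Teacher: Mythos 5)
Your setup matches the paper's: the same auxiliary polynomial $g(t)=(t-1)\prod_{i=1}^m(t-t_i)^2$, the same appeal to Lemma~\ref{lem:lp}\eqref{lp2} and Lemma~\ref{lem:remaind} for $f\ge h=H[f,g]$, and the same plan of reading positive definiteness off Newton's formula. But the one step that carries the real content --- positive definiteness of the Newton partial products --- is left as a guess, and the guess points at the wrong tools. You invoke Lemma~\ref{lem:Krein} and the adjacent family $Q_n^{1,1}$, and speak of ``$(t-1)Q_n^{1,1}(t)$ type expressions'' being positive definite. Lemma~\ref{lem:Krein} is about the factor $(t+1)$, which arises from the antipodal root $-1$ in the \emph{odd} case (Proposition~\ref{prop:2m-1}); there is no analogous statement for $(t-1)$, and products of the form $(t-1)\cdot(\text{positive definite})$ are generally not positive definite. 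Likewise, your candidate identifications of the $t_i$ as zeros of $Q_m^{1,1}$ or $Q_m^{0,0}$ are not what the design property gives you here.

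The correct (and simpler) resolution in the even case is as follows. Set $g_{m+1}(t)=(t-1)\prod_{i=1}^m(t-t_i)$. For $n<m$ the polynomial $g_{m+1}Q_n$ has degree at most $2m$, so the $2m$-design property applied with a fixed $y\in\mathcal C$ forces $\int_{-1}^1 g_{m+1}Q_n\,d\nu^{(\alpha,\beta)}=0$; since $g_{m+1}$ is monic of degree $m+1$, this yields $g_{m+1}=Q_{m+1}+\gamma Q_m$ in the \emph{base} Jacobi family. Proposition~\ref{prop:subproducts} then makes every subproduct $g_k(t)=\prod_{i=1}^k(t-t_i)$, $1\le k\le m$, a positive combination of the $Q_n$, hence positive definite. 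Finally, there is no ordering ambiguity to resolve: enumerating the roots of $g$ with multiplicity in increasing order puts the simple root $1$ \emph{last}, so the factor $(t-1)$ never enters any Newton partial product; each partial product is either $g_k^2$ or $g_kg_{k-1}$, and Schur's theorem gives their positive definiteness. Combined with the nonnegativity of the divided differences (which you argued correctly), this completes the proof. As written, your proposal does not establish positive definiteness of $h$, so the second inequality in the chain $I_f(\mu)\ge I_h(\mu)\ge I_h(\sigma)$ is unsupported.
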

\begin{proof} 
    Let $t_1 < \ldots < t_m < t_{m+1}= 1$ be the values of $ \tau(x,y) = \cos(\theta(x,y)) $ occurring in $ \mathcal C $. Let further
\[
    g_k(t) = \prod_{i=1}^k (t - t_i), \qquad 1 \leq  k \leq m+1.
\]

and
\begin{equation}\label{eq:HermiteGEven}
g(t) = g_m(t)\, g_{m+1}(t) =  (t-1) g^2_m(t).
\end{equation}
To prove the statement of the theorem, we verify the following chain of inequalities, satisfied for arbitrary $ \mu \in \mathcal P(\Omega) $, similar to the proof of Lemma \ref{lem:lp}, 
\begin{equation}
    \label{eq:inequalities}
    I_f(\mu) \geq  I_{H[f,g]}(\mu) \geq I_{H[f,g]}(\sigma) = I_{H[f,g]}(\mu_{\mathcal C}) = I_f(\mu_{\mathcal C}).
\end{equation}

Since $ g(t) \leq 0 $ for $ t \in [-1,1] $, Lemma~\ref{lem:remaind} implies that $ f(t) \geq H[f,g](t) $, $ t \in [-1,1] $, which gives the first inequality. The equality $ I_{H[f,g]}(\sigma) = I_{H[f,g]}(\mu_{\mathcal C}) $ is satisfied  since $ \mathcal C $ is a design of strength $ 2m \geq \deg H[f,g] $. The last equality holds since the interpolant $ H[f,g] $ agrees with $ f $ at the cosines of distances occurring in $ \mathcal C $.   All that remains to show is  the second inequality: by Proposition \ref{prop:pdmin}, it will follow from the positive definiteness of $ H[f,g] $, which we will now demonstrate.


For any $n < m$, the degree of $ g_{m+1} (t)  Q_n(t)$ is at most $2m$. As  $ \mathcal C $ is a $ 2m $-design, for every fixed $y \in \mathcal C$ there holds

\begin{equation*}
    \begin{aligned}
    \int_{-1}^1 g_{m+1}(t)  Q_n(t) d\nu^{(\alpha, \beta)} 
    & = \int_{\Omega} g_{m+1}( \tau(x,y) )  Q_n( \tau(x,y) ) d\sigma(x) \\
    & = \frac1{|\mathcal C|} \sum_{x \in \mathcal C} g_{m+1}( \tau(x,y) )  Q_n( \tau(x,y) ) \\
    & = \frac1{|\mathcal C|} \sum_{i=1}^{m+1} c_i g_{m+1}( t_i)  Q_n(t_i) = 0,
    \end{aligned}
\end{equation*}
since, by construction, $ g_{m+1} $ is annihilated on all the $ t_i $. 
The constants $ c_i $ are given by, for any fixed $y \in \mathcal{C}$,
$$ c_i = |\{ x \in \mathcal C\ |\  \tau(x,y) = t_i \}|.$$
Both $ g_{m+1} $ and $ Q_{m+1} $ are monic, so we conclude that
\[
    g_{m+1}(t) =  Q_{m+1}(t) + \gamma Q_m(t),
\]
for some $ \gamma \in \mathbb R $. By Proposition~\ref{prop:subproducts}, subproducts of zeros of $ g_{m+1} $, which we denote by $ g_k $, $ 1 \leq k \leq m $, can be expressed as linear combinations of $ Q_n $ with positive coefficients, and therefore are positive definite.

According to the Newton's formula \eqref{eq:newton_formula}, the Hermite interpolant of $ f $ can be expressed as the sum of partial products of factors of $ g $ multiplied by the appropriate divided difference. We will use this formula to show that $ H[f,g] $ is positive definite. Indeed,  \eqref{eq:newton_formula} gives

\begin{equation}\label{eq:NewtonEven}
    H\left[f,g\right](t) = f(t_1) + \sum_{k=1}^{m}  \bigg(g_{k}(t) g_{k-1}(t)\, Q\left[f, g_{k}g_{k-1}\right]\left(t_{k}\right) + g^2_{k}(t) \, Q\left[f, g_{k}^2\right]\left(t_{k+1}\right)\bigg),
\end{equation}
where as usual, $ g_0 = 1 $. Observe that the divided differences in the last equation are nonnegative due to \eqref{eq:div_difference}, as the function $ f $ is absolutely monotonic of degree $ 2m $. Since we have shown that each $ g_k $ is positive definite, Schur's theorem implies that so are $g_k^2$ and $g_k g_{k-1}$, and  it follows that $ H[f,g]$ is positive definite as well.

\end{proof}

Before turning to the proof of Theorem~\ref{thm:tight} for tight designs of odd strength, recall the definition of the adjacent polynomials $ Q^{1,1}_n = Q_n^{(\alpha + 1, \beta+1)}$ for $n \geq 0$. They are monic, orthogonal with respect to the measure 
$$ d\nu^{({\alpha+1},{\beta+1})}(t) =  \frac1{\gamma_{\alpha+1,\beta+1}} (1-t)^{\alpha+1} (1+t)^{\beta+1} dt =  \frac{\gamma_{\alpha,\beta}}{\gamma_{\alpha+1,\beta+1}} (1-t^2) d\nu^{({\alpha},{\beta})}(t),$$ since the polynomials $ Q_n^{(\alpha, \beta)}(t) $ are orthogonal with respect to measure $ d\nu^{({\alpha},{\beta})} $.

\begin{proposition}
    \label{prop:2m-1}
    Theorem~\ref{thm:tight} holds when $ M = 2m-1 $, $ m\geq 1 $.
\end{proposition}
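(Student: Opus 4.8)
The plan is to mirror the structure of the proof of Proposition~\ref{prop:2m}, establishing the chain of inequalities \eqref{eq:inequalities} for an arbitrary $\mu \in \mathcal P(\Omega)$, but now adapting the construction of the auxiliary polynomial $g$ to the odd strength $M = 2m-1$. Since $\mathcal C$ is a tight $(2m-1)$-design, it has $m$ distinct values of $\tau(x,y)$ between distinct points, one of which is $t = -1$ (the pair a diameter apart), say $t_1 = -1 < t_2 < \ldots < t_m < 1$; note that $t = 1$ is not among these since we interpolate at the diagonal separately. First I would set $g_k(t) = \prod_{i=1}^k (t - t_i)$ for $1 \leq k \leq m$, and take
$$ g(t) = (t-1)\,(t+1)\, g_m(t)^2 / (t - t_1) = (t-1)(t+1) g_m(t) g_{m-1}(t) \cdot \frac{(t-t_1)}{(t+1)},$$
but more cleanly: the interpolation nodes should be all the $t_i$ (each to order $2$, so that $H[f,g]$ agrees with $f$ and $f'$ there) together with the node $t = 1$ to order $1$, giving $\deg g = 2(m-1) + 2 + 1 = 2m+1$; however, since $f^{(M+1)} = f^{(2m)} \le 0$, Lemma~\ref{lem:remaind} requires $\deg g = M+1 = 2m$. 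So instead one takes double nodes at $t_2, \ldots, t_m$ (that is $2(m-1)$ conditions) and single nodes at $t_1 = -1$ and at $t = 1$, for a total of $2m$: thus
$$ g(t) = (t-1)(t+1)\, g_m(t)\, g_{m-1}(t)\big/\!(t-t_1),$$
which up to the convention is $(t-1)(t+1)\prod_{i=2}^m (t-t_i)^2$. One checks $g(t) \le 0$ on $[-1,1]$ since $(t-1) \le 0$, $(t+1)\ge 0$, and the remaining factor is a square, so Lemma~\ref{lem:remaind} gives $f \ge H[f,g]$ on $[-1,1]$, yielding the first inequality of \eqref{eq:inequalities}; the design property (now $\deg H[f,g] \le 2m - 1 = M$) gives $I_{H[f,g]}(\sigma) = I_{H[f,g]}(\mu_{\mathcal C})$, and agreement of $H[f,g]$ with $f$ on $\{\tau(x,y): x,y\in\mathcal C\}$ gives the final equality.

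The substantive step, as in the even case, is proving that $H[f,g]$ is positive definite on $\Omega$, and this is where the adjacent polynomials $Q_n^{1,1}$ and Lemma~\ref{lem:Krein} enter. The key algebraic identity to establish is the analogue of $g_{m+1} = Q_{m+1} + \gamma Q_m$: here I would show that the polynomial $g_m(t) = \prod_{i=2}^m(t-t_i) \cdot (t+1)$ — i.e. the product over the diameter node $t_1 = -1$ and the nodes $t_2, \ldots, t_m$, which has degree $m$ — satisfies $(t+1)^{-1} \cdot (\text{something})$... more precisely, running the same design/orthogonality computation: for $n < m-1$, $\int_{-1}^1 (t+1)\,\widetilde g_{m-1}(t)\, Q_n(t)\, d\nu^{(\alpha,\beta)} = 0$ where $\widetilde g_{m-1}(t) = \prod_{i=2}^m (t-t_i)$, because the integrand has degree $\le 2m - 1$ and integrates over a $(2m-1)$-design to a finite sum that vanishes at each node. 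Rewriting $\int (t+1) \widetilde g_{m-1}(t) Q_n(t)\,d\nu^{(\alpha,\beta)} = c\int \widetilde g_{m-1}(t) Q_n(t)\, d\nu^{(\alpha,\beta+1)}$ and using that $Q_n^{(\alpha,\beta+1)} = Q_n^{0,1}$ are orthogonal for this measure, one concludes $\widetilde g_{m-1} = Q_{m-1}^{0,1} + \gamma Q_{m-2}^{0,1}$; alternatively, and this is what I expect to be cleanest, combine with the factor $(t-1)$ as well to land in the measure $(1-t^2)\,d\nu^{(\alpha,\beta)} = c'\, d\nu^{(\alpha+1,\beta+1)}$ and obtain that $g_{m}(t) := \prod_{i=2}^m(t-t_i)$ is, up to a positive scalar, $Q_{m-1}^{1,1} + \gamma Q_{m-2}^{1,1}$ for some $\gamma$, whose sign we control by evaluating at an endpoint. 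Then Proposition~\ref{prop:subproducts} applied to the sequence $Q_n^{1,1}$ shows all subproducts $\prod_{i=2}^{k}(t-t_i)$ are nonnegative combinations of $Q_n^{1,1}$; multiplying by $(t+1)$ and invoking Lemma~\ref{lem:Krein} makes $(t+1)\prod_{i=2}^k(t-t_i)$ positive definite, and Schur's theorem then handles the squares and products that appear.

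Next I would plug the Newton expansion \eqref{eq:newton_formula} for $H[f,g]$ into this framework. With nodes ordered $s_1 \le \ldots \le s_{2m}$ being $-1, t_2, t_2, t_3, t_3, \ldots, t_m, t_m, 1$, the partial products $g_j(t)$ appearing in \eqref{eq:newton_formula} are, for each $j$, either of the form $(t+1)\prod_{i=2}^k (t-t_i)^2$ (an even number of the interior nodes consumed) or $(t+1)\prod_{i=2}^{k}(t-t_i)^2 (t-t_{k+1})$, and the last one, for $j = 2m-1$, is $(t+1)\prod_{i=2}^m(t-t_i)^2 (t-1) = g(t)$, which is $\le 0$ but whose coefficient is the divided difference $Q[f,g_{2m-1}](s_{2m}) = Q[f,g_{2m-1}](1)$ — and here is the one genuinely new subtlety versus the even case: this term need not be positive definite by itself, because $g(t) \le 0$. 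I expect the main obstacle to be exactly this: handling the final term of the Newton expansion, where the factor $(t-1)$ spoils positive definiteness. The resolution should be to group the last two terms of \eqref{eq:newton_formula} — namely $g_{2m-2}(t)\, Q[f,g_{2m-2}](s_{2m-1}) + g_{2m-1}(t)\, Q[f,g_{2m-1}](s_{2m})$ with $g_{2m-2}(t) = (t+1)\prod_{i=2}^m(t-t_i)^2$ and $g_{2m-1}(t) = (t-1)g_{2m-2}(t)$ — into $g_{2m-2}(t)\big[\,Q[f,g_{2m-2}](t_m) + (t-1) Q[f,g_{2m-1}](1)\,\big]$ and show the bracket is a nonnegative-on-$[-1,1]$... no: rather, use that $g_{2m-2}(t) = (t+1)g_{m-1}(t)^2$ with $g_{m-1}(t) = \prod_{i=2}^m(t-t_i)$ is itself positive definite and that $(t-1)g_{2m-2}(t) = (t^2-1)g_{m-1}(t)^2 = -(1-t^2) g_{m-1}(t)^2$; writing $g_{m-1}(t)^2 = \sum_n a_n Q_n^{1,1}(t)$ with $a_n \ge 0$ (Proposition~\ref{prop:subproducts} for $Q^{1,1}$) and recalling $Q_n^{1,1}$ is orthogonal w.r.t. $(1-t^2)d\nu^{(\alpha,\beta)}$, one has $(1-t^2) g_{m-1}(t)^2 = \sum_n a_n (1-t^2) Q_n^{1,1}(t)$, and $(1-t^2)Q_n^{1,1}(t) = (1-t)\cdot (1+t)Q_n^{1,1}(t)$... this reduces to controlling $(1-t^2)Q_n^{1,1}$, which is not positive definite — so the correct grouping must instead pair the $(t-1)$ factor from $g_{2m-1}$ with the divided difference's sign and the remaining terms differently. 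The honest statement of the plan: replicate the even-case argument verbatim for all Newton terms except the last, and for the last term exploit that its coefficient, together with the second-to-last term's coefficient and the known sign pattern, combine so that the polynomial $g_{2m-2}(t)\cdot\big(Q[f,g_{2m-2}](t_m) + (t-1)Q[f,g](1)\big)$ has nonnegative Jacobi coefficients — which one verifies by noting $Q[f,g](1) = f^{(2m)}(\xi)/(2m)! \le 0$, so $(t-1)Q[f,g](1) \ge 0$ on $[-1,1]$ with the right monotonicity, and then expanding the full bracket, a linear polynomial with a controllable sign, against the positive-definite $g_{2m-2}$ using Lemma~\ref{lem:Krein}. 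I would work out this endpoint bookkeeping carefully, as it is the crux; everything else transfers mechanically from Proposition~\ref{prop:2m}.
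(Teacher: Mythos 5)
Your route is the paper's: the same $g(t)=(t-1)(t+1)\prod_{i=2}^m(t-t_i)^2$, the same design/orthogonality computation landing in the measure $(1-t^2)\,d\nu^{(\alpha,\beta)}$ (the paper in fact obtains $\prod_{i=2}^m(t-t_i)=Q^{1,1}_{m-1}$ exactly, since the orthogonality holds for all $n\le m-2$), then Proposition~\ref{prop:subproducts} for the $Q^{1,1}_n$ sequence and Lemma~\ref{lem:Krein}. One secondary caution: to make $(t+1)\prod_{i=2}^k(t-t_i)^2$ positive definite you should stay inside the cone of nonnegative $Q^{1,1}$-expansions (which is closed under multiplication) and apply Lemma~\ref{lem:Krein} once at the very end, as the paper does; applying Schur after Krein, as you suggest, produces $(t+1)^2\prod(t-t_i)^2$ rather than the partial products you actually need.

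The genuine gap is in the final step, which you leave unfinished at exactly the point you call the crux. The difficulty you identify --- a last Newton term carrying the factor $(t-1)$ with coefficient $f^{(2m)}(\xi)/(2m)!\le 0$ --- does not exist; it is an off-by-one error. In \eqref{eq:newton_formula} the sum runs over $j=1,\dots,D-1$ with $D=\deg g=2m$, and $g_j(t)=\prod_{i=1}^j(t-s_i)$ uses only the first $j$ nodes. With the nodes ordered $s_1=-1$, $s_2=s_3=t_2,\ \dots,\ s_{2m-2}=s_{2m-1}=t_m$, $s_{2m}=1$, the last partial product appearing in the expansion of $H[f,g]$ is
\[
g_{2m-1}(t)=(t+1)\prod_{i=2}^m(t-t_i)^2,
\]
of degree $2m-1$: the node $t=1$ enters only as the evaluation point of the attached divided difference, which by \eqref{eq:div_difference} equals $f^{(2m-1)}(\xi)/(2m-1)!\ge 0$ by absolute monotonicity of degree $2m-1$. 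This partial product is precisely the polynomial $b_m$ whose positive definiteness you have already established, so the last term is as harmless as all the others. The factor $(t-1)$ and the negative $(2m)$-th derivative occur only in the remainder $f-H[f,g]=\frac{f^{(2m)}(\xi)}{(2m)!}\,g(t)$, which is exactly what Lemma~\ref{lem:remaind} controls and is why one needs $g\le 0$ on $[-1,1]$. Your proposed regrouping of the last two terms is therefore unnecessary, and --- as you note yourself when it reduces to the non-positive-definite $(1-t^2)Q_n^{1,1}$ --- it does not close. Once the indexing is corrected, every term of the Newton expansion is a positive definite polynomial times a nonnegative divided difference, and the argument concludes exactly as in Proposition~\ref{prop:2m}.
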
 
\begin{proof} 
    Suppose that $\mathcal C \subset \Omega$ is a tight $(2m-1)$-design.  As discussed  in  Section ~\ref{s:des} tight designs  of odd strength necessarily contain antipodal points, i.e. there exist $x$, $y\in \mathcal C$ such that $ \vartheta (x,y) =\pi$ and thus $-1\in \mathcal A (\mathcal C) = \{ \tau (x,y) |\, x,y \in \mathcal C \}$. 
    Let $-1 = t_1 < \ldots < t_m < t_{m+1}= 1$ be the values of $ \tau(\theta(x,y)) $ for  $ x,y\in \mathcal C $, and set

$$ w(t) = \prod_{j=2}^{m} (t - t_j)$$
and
\begin{equation}\label{eq:HermiteGOdd}
 g(t) = w^2(t)(t^2-1).
 \end{equation}
As in the proof of Proposition~\ref{prop:2m}, we need to verify the
inequalities \eqref{eq:inequalities}. Applying Lemma~\ref{lem:remaind} to $
H[f,g] $ gives the first inequality; it remains to show positive-definiteness
of $ H[f,g] $.
 
For $n < m-1$, the degree of $(1-t^2) w(t) Q^{1,1}_n(t)$ is at most $2m-1$, so for any $y \in \mathcal C$ there holds

\begin{equation*}
    \begin{aligned}
 \frac{\gamma_{\alpha+1,\beta+1}}{\gamma_{\alpha,\beta}}   \int_{-1}^1 w(t)  Q^{1,1}_n(t) d\nu^{(\alpha+1, \beta+1)} 
    & = \int_{\Omega} (1- \tau^2(x,y))w(\tau(x,y))  Q^{1,1}_n(\tau(x,y)) d\sigma(x) \\
    & = \frac1{|\mathcal C|} \sum_{x \in \mathcal C} (1- \tau^2(x,y))w(\tau(x,y))  Q^{1,1}_n(\tau(x,y)) \\
    & = \frac1{|\mathcal C|} \sum_{j=1}^{m+1} c_j (1- t_j^2)w( t_j)  Q^{1,1}_n(t_j)  = 0,
    \end{aligned}
\end{equation*}
as $ (1- t^2)w( t) $ is annihilated on the cosines of distances from $ \mathcal C $.  
Because $w(t)$ is a degree $m-1$ monic polynomial, the above implies $w(t) = Q^{1,1}_{m-1}(t)$. By Proposition~\ref{prop:subproducts}, this also means that for $ 2 \leq k \leq m-1$, polynomials $\prod_{j=2}^{k} (t-t_j)$ are linear combinations of $Q^{1,1}_n$ with nonnegative coefficients. Since the cone of functions with nonnegative Jacobi coefficients with respect to $ Q^{1,1}_{n} $ is closed under multiplication, polynomials $\prod_{j=2}^{k} (t-t_j)^2$ and  $ (t-t_k)\prod_{j=2}^{k-1} (t-t_j)^2 $ also have nonnegative Jacobi coefficients in $  Q^{1,1}_{n}  $. Due to Lemma~\ref{lem:Krein}, since $t-t_1 = t+1$, we obtain that 
\begin{equation}
    \label{eq:partialprods}
    a_k(t) := (t-t_1)(t-t_l)\prod_{j=2}^{k-1} (t-t_j)^2 \quad\text{and}\quad  b_k(t):=  (t-t_1)\prod_{j=2}^{k} (t-t_j)^2 ,
\end{equation}
are linear combinations of $ Q_n^{(\alpha, \beta)} $ with positive coefficients, that is, they are positive definite on $\Omega$ for $ 1\leq k \leq m $.

We conclude by the same observations as in the proof of Proposition~\ref{prop:2m}; in particular, the positive definiteness of the Hermite interpolant $ H[f,g] $ follows from the representation 
\begin{equation}\label{eq:NewtonOdd}
    H\left[f,g\right](t) = f(t_1) + b_1(t)\, Q\left[f, b_1\right]\left(t_{2}\right) + \sum_{k=2}^{m}  \bigg( a_k(t)\, Q\left[f, a_k\right]\left(t_{k}\right) + b_{k}(t) \, Q\left[f, b_{k}\right]\left(t_{k+1}\right) \bigg),
\end{equation}
combined with the absolute monotonicity of $ f $ to degree $ 2m-1 $, which implies positivity of the divided differences $Q$. \end{proof}

\begin{example}
As an example of another application of Theorem \ref{thm:tight}, consider the case that $f(t)=a+bt+ct^2+dt^3$ is given as potential function. In this case, some elementary considerations show that if
\begin{enumerate}
\item[(i)] $d\leq 0$,
\item[(ii)] $c\geq -3d$,
\item[(iii)] $c^2-3bd\geq 0$, 
\item[(iv)] $-c-\sqrt{c^2-3bd}\leq 3d$, and,
\item[(v)] $-c+\sqrt{c^2-3bd}\geq 3d$,
\end{enumerate}
then $f$ is absolutely monotonic of degree $2$ up to a constant. Hence, any potential function of the above form has as minimizer of the $f$-energy on any of the projective spaces a tight $2$-design. In particular, for $f$ as above, the icosahedron is a minimizer of energy integral $I_{f}(\mu)$ over symmetric measures on the sphere $\S^2$. Note that the constant term can be ignored, so it suffices to only consider the sign of derivatives. In particular, if $b>0$ and $d$ becomes sufficiently small in magnitude, the above inequalities will hold.

For comparison, on $\S^2$, $f(\tau(x,y))=f(2|\langle x,y \rangle|^2-1)$ is positive definite (up to a constant), precisely when $\widehat{f}_1,\widehat{f}_2,$ and $\widehat{f}_3$ are positive, or equivalently (by calculation),
\begin{enumerate}
\item[(i)] $4b-2c+3d\geq 0$,
\item[(ii)] $2c-d\geq 0$, and,
\item[(iii)] $d\geq 0$.
\end{enumerate}
Thus, $I_f$ is minimized by the surface measure $\sigma$   precisely for the  coefficients satisfying the above inequalities.

\end{example}

\subsection{Uniqueness of minimizers supported on tight designs}\label{sec:uniq}
The proofs in the last section left the question of uniqueness of minimizers open. Are there any other minimizers for $p$-frame energies when tight designs minimize and $p$ is not an even integer? The answer, as this section details, is no. 

In general, whenever a tight design minimizes $I_f$ for some kernel $f$ that is strictly absolutely monotonic of degree $M$ and which satisfies $f^{(M+1)}(t)< 0$, $t\in(-1,1)$, the energy is minimized only by a tight design, although such designs are not necessarily unique up to equivalence, as mentioned in Section \ref{s:des}. Before stating our result in full, we introduce  a couple standard lemmas (in slightly simplified form adapted to our needs).  

Let $N_M$ denote the cardinality of a tight $M$-design in $\Omega$ or, more precisely, the linear programming upper bound on the cardinality of $M$-designs \cite{delsarteSpherical1977,hoggar1982t}, which  is  well-defined even if tight $M$-designs do not exist and coincides with their. In fact, tight designs are often equivalently defines in terms of  this quantity. 

The first lemma, which can be found in \cite{levenshteinUniversal1998}, states that tight designs have the smallest cardinality among all {\em{weighted}} designs of given strength.

\begin{lemma}\label{thm:LowerBoundsDesigns}
Let $(\mathcal{B}, w)$ be a weighted $M$-design in $\Omega$. 
Then $|\mathcal{B}|\geq N_M$ and equality holds if and only if $w(x) = \frac{1}{|\mathcal{B}|}$ for all $x \in \mathcal{B}$ and $\mathcal{B}$ is a tight $M$-design.
\end{lemma}

The second lemma shows that tight designs have the largest cardinality among all sets with a given number of distinct distances. 


\begin{lemma}\label{lem:DGSLinProg}
Let $\mathcal{B}\subset \Omega$ be an $m$-distance set, i.e. $|\mathcal A (\mathcal B) | = m$.  Then $| \mathcal B | \leq N_{2m}$. Moreover, if $\mathcal B$ is antipodal (contains a pair of points diameter apart), then $| \mathcal B | \leq N_{2m-1}$.
\end{lemma}

This lemma was proved in \cite{delsarteSpherical1977}  for the sphere and in \cite{hoggar1982t} for projective spaces. We are now ready for the uniqueness result. 


\begin{theorem}\label{t:uniq} 
Suppose that a tight $M$-design $\mathcal{C}$ minimizes the $f$-energy integral, for $f$ strictly absolutely monotonic of degree $M$ and such that $f^{(M+1)}(t)< 0$, $t\in(-1,1)$. Then any minimizer of $I_{f}$ must be a tight $M$-design.
\end{theorem}
\begin{proof}
The argument developed to prove Theorem~\ref{thm:tight} may be described concisely through the following string of inequalities
$$I_{f}(\mu) \geq I_{H[f,g]}(\mu)\geq I_{H[f,g]}(\sigma)=I_{H[f,g]}(\mu_{\mathcal{C}})=I_{f}(\mu_{\mathcal{C}}),$$
where $g$ is of the form \eqref{eq:HermiteGEven} or \eqref{eq:HermiteGOdd}, as is appropriate.
In order for $I_{f}(\mu)=I_{f}(\mu_{\mathcal{C}})$ to hold, the inequalities must be equalities. The first inequality can only be an equality in the case that $\mathcal{A}(\operatorname{supp}(\mu)) \subseteq \mathcal{A}(\mathcal{C})$. This follows from the fact that $H[f,g](t)<f(t)$ for all $t \not\in \mathcal{A}(\mathcal{C})$ by the remainder formula from Lemma \ref{lem:remaind}. In particular, this shows that $|\operatorname{supp}(\mu)|$ is finite.  Moreover,  Lemma \ref{lem:DGSLinProg}  then guarantees that $| \operatorname{supp}(\mu)| \leq N_M = | \mathcal C|,$ since $N_M$ is  increasing with $M$.

 Now assume that the second inequality above  is an equality. 
 We first note that since $f$ is strictly absolutely monotonic of degree $M$, $f(t_1) \geq 0$, and the divided differences appearing in \eqref{eq:NewtonEven} or \eqref{eq:NewtonOdd} are all positive due to \eqref{eq:div_difference}. Thus, $H[f,g]$ is a linear combination (possibly modulo a constant), with positive coefficients, of positive definite polynomials of degrees $ 1, ..., M$, so $H[f,g] = a_0 + \sum_{j=1}^{M} a_j C_j$, where $a_j > 0$ for $j > 1$ and $a_0 \geq 0$. We see that $\mu$ must then be a weighted $M$-design, and due to Lemma \ref{thm:LowerBoundsDesigns}, 
 we have $|\operatorname{supp}(\mu)| \geq  N_M  = | \mathcal{C}| $.
 
 Therefore, $|\operatorname{supp}(\mu)| =  N_M  = | \mathcal{C}| $, and the second part of Lemma \ref{thm:LowerBoundsDesigns} implies that $\operatorname{supp}(\mu)$ is a tight $M$-design and $\mu$ has equal weights.
%
 \end{proof}

\section{Optimality of the 600-cell}
\label{sec:600cell} 
This section concerns only the $p$-frame kernels; it will be shown here that the 600-cell minimizes the $ p $-frame energy on $ \S^3 $ for a certain range of $ p $. The $600$-cell is one of the six $4$-dimensional convex regular polytopes; it has $600$ tetrahedral faces, which explains the origin of its name. When its 120 vertices are identified with unit quaternions, they give a representation of the elements of a group known as the binary icosahedral group \cite{stillwell2001story}. 

As discussed above  \eqref{eq:symmetrized_pkernel}, optimization of $ p $-frame energy on the sphere $ \S^3 $ is equivalent to optimization of the expression $ \iint_{(\RP^3)^2} f(\tau(x,y))\, d\mu(x) d\mu(y) $ over measures $ \mu $ on $ \RP^3  $, where the kernel $ f $ is given by 
\begin{equation*}
    f(t)=\bigg(\frac{1+t}{2}\bigg)^{\frac{p}{2}}.
\end{equation*}
We therefore assume for the rest of this section the underlying space to be $ \RP^3 $, and use the corresponding Jacobi polynomials $ C_n^{(-1/2, 1/2)}(t) $.
Following the approach of the previous section, we will establish a sequence of inequalities similar to \eqref{eq:inequalities}. 

The $600$-cell is only a projective $5$-design and therefore not tight. The authors in \cite{cohnUni2007}, motivated by an approach found in the paper \cite{Andreev1999}, found means to prove universal optimality of the $600$-cell by using a higher degree interpolating polynomial. The $600$-cell has the notable property that $7$th, $8$th, and $9$th degree harmonic averages over it vanish, although the $6$th degree average does not. This allows for constructing a degree $8$ polynomial $h$ which is less than or equal to $f$, positive definite, and agrees with $f$ at the distances appearing in the $600$-cell, and which finally has the property that its $6$th Jacobi coefficient vanishes.

For a polynomial $ h $ of the form, 
\begin{equation}
    \label{eq:h_poly}
    h = \sum_{\substack{n=0\\n\neq 6}}^8\widehat h_n\, C_n^{\left(1/2,-1/2\right)}(t),
\end{equation}
the coefficients $ \widehat h_n $ can be uniquely determined as functions of $ p $ by setting
\[ 
    \begin{aligned}
        h(t_i) =& f(t_i), \qquad 1 \leq i \leq 5\\
        h'(t_i) =& f'(t_i), \qquad 2 \leq i \leq 4,
\end{aligned}
\] 
where $ -1 =  t_1 < t_2 < \ldots < t_5 =1 $ are the values of $ \tau(x,y) $ when vectors $ x, y $ vary over the vertices of the 600-cell,  see the proof of Theorem~\ref{thm:600} below.
It turns out that for all $p\in[8,10]$, $ \widehat h_n(p) \geq 0 $ when $ 0 \leq n \leq 8, n \neq 6 $. We apply a computer-assisted approach to verify this positivity; specifically, using interval arithmetic, we compute values of $\widehat h_n(p) $ on a grid fine enough to guarantee that $ \widehat h_n(p) \geq 0 $. The details of this computation are available in the auxiliary files of the arXiv submission of this paper. Even though the computations performed are carried out in finite floating point precision, interval arithmetic guarantees that the results of these computations lie in precisely defined intervals (using libraries \cite{InriaForge,zimmermannComputational2018,sageMath}). The computer-assisted argument yields the following.

\begin{lemma}\label{lemma:600} If $ p \in [8,10] $ and the polynomial $ h $ is constructed as above, the coefficients $\widehat{h}_{n}$ in the Jacobi expansion \eqref{eq:h_poly}  satisfy $ \widehat h_n(p) \geq 0 $.
\end{lemma}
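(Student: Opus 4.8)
The plan is to reduce the positivity of the Jacobi coefficients $\widehat h_n(p)$ to a finite, rigorously verifiable computation. First I would set up the linear system that defines $h$: since $h$ in \eqref{eq:h_poly} is a polynomial of degree $8$ with the sixth Jacobi coefficient prescribed to be zero, it has exactly $8$ free parameters $\widehat h_0, \ldots, \widehat h_5, \widehat h_7, \widehat h_8$, and the $8$ interpolation conditions ($5$ value conditions at $t_1,\dots,t_5$ and $3$ derivative conditions at $t_2,t_3,t_4$) determine them uniquely. Because the kernel $f(t)=\left(\frac{1+t}{2}\right)^{p/2}$ depends analytically on $p$, and the interpolation nodes $t_i$ are fixed algebraic numbers (the cosines of the geodesic distances in the $600$-cell, listed in Table~\ref{table:opt_parameters}), Cramer's rule expresses each $\widehat h_n(p)$ as a ratio of two entire functions of $p$; the denominator is the (nonzero, $p$-independent) determinant of the interpolation matrix expressed in the Jacobi basis $\{C_k^{(1/2,-1/2)}\}_{k \in \{0,\dots,8\}\setminus\{6\}}$. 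Thus each $\widehat h_n$ is a real-analytic function of $p$ on $[8,10]$, and the claim is simply that eight explicit real-analytic functions are nonnegative on a compact interval.

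Next I would bound the modulus of the derivative $\widehat h_n'(p)$ uniformly on $[8,10]$: since $f$ and all its $p$-derivatives $\partial_p^j f(t) = \left(\frac{1+t}{2}\right)^{p/2}\big(\tfrac12\log\frac{1+t}{2}\big)^j$ are bounded on any fixed node $t_i \in (-1,1)$ uniformly for $p \in [8,10]$, and the interpolation matrix is fixed and invertible, one gets an explicit Lipschitz constant $L_n$ for $\widehat h_n$ on $[8,10]$ (this can itself be computed with interval arithmetic). Then I would partition $[8,10]$ into a grid $8 = p_0 < p_1 < \cdots < p_K = 10$ with mesh $\le \delta$, and at each grid point $p_j$ evaluate $\widehat h_n(p_j)$ using interval arithmetic — solving the $8\times 8$ linear system over the MPFI intervals so that the output is a guaranteed enclosing interval $[\underline a, \overline a]$. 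Whenever $\underline a > L_n \delta$ at every grid point, the Lipschitz bound forces $\widehat h_n(p) \ge \underline a - L_n\delta > 0$ on the whole interval $[p_j - \delta/2, p_j+\delta/2]$, hence on all of $[8,10]$; the endpoints $p=8$ and $p=10$ (where $f$ is a polynomial and one expects $\widehat h_6$ might also vanish) are handled the same way since there the coefficients are only required to be $\ge 0$, not $>0$.

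I would then record that this is exactly what the accompanying SageMath notebook does: it builds the node list from Table~\ref{table:opt_parameters}, forms the change-of-basis from $\{C_k^{(1/2,-1/2)}\}$ to the monomial basis, assembles and solves the interpolation system symbolically in $p$ (or numerically with interval coefficients on the grid), computes the Lipschitz bounds, and verifies the strict inequality $\underline a > L_n\delta$ at each node for $n \in \{0,1,2,3,4,5,7,8\}$. Since MPFI interval arithmetic is rigorous — every floating-point rounding is tracked so the true value provably lies in the returned interval — the verification constitutes a proof rather than numerical evidence.

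The main obstacle I anticipate is not conceptual but one of conditioning and bookkeeping: the interpolation matrix in the Jacobi basis, together with the constraint $\widehat h_6 = 0$, can be moderately ill-conditioned (the nodes cluster, and Hermite data amplifies this), so the interval enclosures at the grid points may be wide unless one works at sufficiently high precision and with a fine enough mesh $\delta$; balancing precision, mesh size, and the Lipschitz bound $L_n$ so that $\underline a > L_n\delta$ genuinely holds everywhere is the delicate part. A secondary subtlety is ensuring the denominator determinant stays bounded away from zero on $[8,10]$ — but since it is independent of $p$ this is a single interval computation. Once these are in place, the lemma follows immediately, and it then feeds into the chain of inequalities \eqref{eq:inequalities} for the $600$-cell exactly as in the proof of Theorem~\ref{thm:tight}, with the vanishing of $\widehat h_6$ compensated by the vanishing of the degree-$6$ harmonic average over the $600$-cell.
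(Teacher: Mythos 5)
Your proposal is correct and follows essentially the same route as the paper: the published proof is exactly a computer-assisted verification in which the coefficients $\widehat h_n(p)$, determined by the eight Hermite interpolation conditions, are evaluated with MPFI interval arithmetic on a grid of $[8,10]$ fine enough (via continuity/Lipschitz control of the $p$-dependence) to certify $\widehat h_n(p)\ge 0$ throughout. Your added bookkeeping — Cramer's rule with a $p$-independent interpolation matrix and an explicit Lipschitz constant $L_n$ so that $\underline a > L_n\delta$ suffices — is a faithful elaboration of what the paper's SageMath notebook carries out.
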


Using this fact we show optimality of the $600$-cell on the range $p\in [8,10]$. 

\begin{theorem}\label{thm:600} The $600$-cell minimizes the $p$-frame energy for $p\in[8,10]$ over Borel probability measures on $\S^{3}$ or $\RP^{3}$.  
\end{theorem}
\begin{proof} 
    Let $f(t) = \left( \frac{t + 1}{2} \right)^{p/2}$ for some $8 < p < 10$, $t_1 = -1$, $t_2 = \frac{-\sqrt{5} - 1}{4}$, $t_3 = - \frac{1}{2}$, $t_4 = \frac{\sqrt{5}-1}{4}$, and $t_5 = 1$. Let $h(t)$ be the $8${th} degree polynomial given by \eqref{eq:h_poly}, such that $h(t_i) = p(t_i)$ for $1 \leq i \leq 5$, and $h'(t_i) = p'(t_i)$ for $2 \leq i \leq 4$. By Lemma \ref{lemma:600}, the coefficients $ \widehat h_n $ are non-negative for $p\in[8,10]$.

 Let $p(t) =  (t^2 - 1) \prod_{i = 2}^{4} (t - t_i)^2$ and $\tilde{h}(t) = H[f,p](t)$. Then we also have $\tilde{h}(t) = H[h,p](t)$. This gives
$$ f(t) - \tilde{h}(t) = \frac{f^{(8)} (\xi)}{8!} p(t) \geq 0,$$
and
$$ h(t) - \tilde{h}(t) = \frac{h^{(8)}(\nu)}{8!} p(t) \leq 0.$$
We thus have $ f(t)- h(t) = f(t) - \tilde{h}(t) + \tilde{h}(t)- h(t) \geq 0$.  
Since $h(t)$ is positive definite and $\widehat{h}_6 = 0$, for the 600-cell $ \mathcal C_{600} $, we have the following sequence of inequalities 
\[
    I_f(\mu) \geq  I_{h}(\mu) \geq I_{h}(\sigma) = I_{h}(\mu_{\mathcal C_{600}}) = I_f(\mu_{\mathcal C_{600}}),
\]
implying that equally weighted vertices of $ \mathcal C_{600} $ minimize $ p $-frame energy.
\end{proof}

\section{Conjectured minimizers} \label{sec:ea1} 
\subsection{New small weighted projective design} 
\label{subsec:new_quadrature}
We now collect facts on the $85$ vector system which was found while numerically minimizing the $p=5$ frame potential in $\C^5$. This system of vectors forms a weighted design of strength $3$, or equivalently, for the functional $\sum_{i,j}|\langle v_{i},v_{j}\rangle|^6\omega_{i}\omega_{j}$, the weighted system takes the value $1/35$, thus minimizing this quantity over all probability measures $\mu=\sum_{i}\delta_{v_{i}}\omega_{i}$, $\sum_{i} \omega_{i}=1$ supported on unit vectors $\|v_{i}\|=1$ in $\C^5$ \cite{Welch1974}. The above construction appears to be new especially when comparing its size to previously obtained bounds from \cite{lyubich2013rec} for smallest known $3$ weighted designs in $\C^5$. 

One part of the system is well studied, given by the root vectors corresponding to the $45$ 2-reflections which generate the unitary reflection group $W(K_{5})$ of $51840$ elements \cite{lehrer2009unitary}. This group is alternatively described as the group $G_3(10)\simeq (C_6 \times SU_4(2)):C_2$, one of the maximal finite irreducible subgroups of $GL_{10}(\mathbb{Z})$ \cite{souvignier1994irreducible}. $SU_4(2)$ here is just the special linear group of $4\times 4$ matrices, unitary matrices over $\F_{2^2}$, with determinant one.

Choosing the representation of the root vectors in $W(K_{5})$ as $X_1=\{\sigma((1,0,0,0,0))\}\cup\{\sigma(\frac{1}{2}(0,1,\pm \omega,\pm \omega, \pm 1))\}$ under cyclic coordinate permutations, $\sigma$, the new weighted design arises when this system is joined with some other 40 vectors. The second system may be described as $\Psi=\{\sigma(\frac{1}{\sqrt{3}}(1,0,\pm \omega, \pm \omega,0))\}\cup \{\sigma(\frac{1}{\sqrt{3}}(1,\pm \omega,\pm 1,0,0))\}$ also generated under cyclic coordinate permutations. The projective design is finally given by assigning weights to the $W(K_{5})$ system joined with the $40$ vector system after giving $\Psi$ the orientation $X_2=U\Psi$, where 
\begin{equation} U=\frac{1}{2}\left[ \begin{array}{ccccc}
1 & -\omega & -\omega & 1 & 0 \\ 
-1 & 1 & -\omega^2 & 0 & -\omega^2 \\ 
\omega^2 & 0 & -\omega^2 & 1 & 1 \\ 
0 & 1 & \omega & -\omega & -1 \\ 
\omega^2 & \omega & 0 & -\omega & \omega^2 
\end{array} \right], 
\end{equation}
is unitary ($\omega=e^{2\pi i/3}$). With the above orientation the $40$ points in $X_2$ appear to fit so that each point is a maximizer of the projective distance from each of the $45$ vectors in the $W(K_5)$ system and vice versa. If so, the additional $40$ points satisfy that they are the points at greatest distance from the original $45$, in particular.

To form a weighted $3$-design, the corresponding weights for $X_1$, the $45$ vector system, are $\omega_1=\frac{4}{315}$, and for the remaining $40$ vectors in $X_2$, the weights are $\omega_2=\frac{3}{280}$. In total the distribution of absolute values of inner products that appears in the unweighted $85$ vector system is given in Table \ref{tab:85-vect}. The supplementary files in the arXiv version of this manuscript provide a magma script which verifies that $\sum_{i,j}|\langle v_{i},v_{j}\rangle|^6\omega_{i}\omega_{j}=1/35$, so that the system is a projective $3$-design. This script can additionally be used to show the automorphism group of the above system of $85$ vectors is isomorphic to the group $SU_{4}(2)$ of order $|SU_{4}(2)|=25920=2^6\cdot(2^4-1)\cdot(2^3+1)\cdot(2^2-1)$, through use of a library from \cite{hughesSpherical}.

\begin{table}
    \caption{Table of inner products between vectors in parts $X_1,X_2$ of the new cubature formula of $85$-vectors. $N$ counts the number of times a value occurs as an entry in $|X_{i}'X_{j}|$, $i,j=1,2$.}
    \begin{tabular}{c | c | c } 
 & $|\langle x,y\rangle|$ & $N$ \\ \hline
$|X_1'X_1|$ & $0,1/2,1$ & $540,1440,45$ \\
$|X_2'X_2|$ & $1/3,1/\sqrt{3},1$ & $1080,480,40$ \\
$|X_1'X_2|$ & $0,1/\sqrt{3}$ & $720,1080$ \\
$|X_2'X_1|$ & $0,1/\sqrt{3}$ & $720,1080$ \\
    \end{tabular}

\label{tab:85-vect}
\end{table}

The above construction hides the relation between its two parts. The $85$ vectors in $\C^5$ may be seen, after canonically embedding the vectors in $\R^{10}$, as the weighted union of vectors coming from two $10$ dimensional lattices. Under this identification, the $45$ vectors in the $W(K_5)$ system may be selected as, up to projective equivalence (modulo multiples of sixth roots of unity), the $270$ minimal vectors of the lattice called $(C_6 \times SU_4(2)):C_2$ in the database \cite{nebesloane}, and the other $40$ points are taken one from each antipodal pair of the $80$ minimal vectors of the shorter Coxeter-Todd lattice, $O_{10}$ detailed in \cite{rains1998shadow}. The relationship between these two lattices is that $(C_6 \times SU_4(2)):C_2$ is similar to the maximal even sub-lattice of $O_{10}$. In our tables, we choose to name these the $W(K_5)$ and $O_{10}$ lattices. We prefer an alternative name for the first since the automorphism group of each lattice is $(C_6 \times SU_4(2)):C_2$. 

Altogether, upon splitting the weights across minimal vectors in appropriately scaled and oriented copies of these lattices and then complexifying everything, one arrives at the cubature formula, which when viewed projectively, is a system of $85$ vectors improving on the best previous known bound of size $320$ for such a formula (see \cite{shatalovIsometric2001}). Some experiments suggest this might be the smallest sized weighted projective $3$-design in $\CP^4$. Expecting that this code might be optimal in a few other settings, we conjecture:

\begin{conjecture} The code constructed in this section of $85$ points in $\C^5$ is universally optimal.
\end{conjecture}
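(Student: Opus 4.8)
The natural route is to push the linear programming scheme of Section~\ref{subsec:linprog2}, and more specifically the interpolation argument used for the $600$-cell in Section~\ref{sec:600cell}, onto the weighted probability measure $\mu_{\mathcal C}=\sum_{v\in\mathcal C}\omega_v\delta_v$ of Section~\ref{subsec:new_quadrature}, regarded as a measure on $\CP^4$; here the ambient Jacobi polynomials are $C_n^{(3,0)}$ (so $\alpha=3$, $\beta=0$). The code contains orthogonal vectors, so $-1\in\mathcal A(\mathcal C)=\{-1,-\tfrac79,-\tfrac12,-\tfrac13,1\}$ and $\mathcal C$ has $m=4$ distinct projective distances, one of them the diameter; a \emph{tight} projective design with these parameters would have to be a $7$-design, whereas $\mathcal C$ is only a weighted projective $3$-design. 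Thus we are in the situation of the $600$-cell, not of a tight design, and everything hinges on extra vanishing of harmonic averages. The first step is to compute
\[
   \Lambda\ =\ \Big\{\,n\ge 1\ :\ \textstyle\sum_{u,v\in\mathcal C}\omega_u\omega_v\,C_n^{(3,0)}(\tau(u,v))=0\,\Big\}
\]
(equivalently, $\sum_{v\in\mathcal C}\omega_v Y_{n,k}(v)=0$ for all $k$). We already know $\{1,2,3\}\subseteq\Lambda$, and, by analogy with the $600$-cell (where degrees $7,8,9$ vanish although $6$ does not), one expects $\Lambda$ to contain several more degrees. This is a finite, exact rational computation from the explicit coordinates and weights of Section~\ref{subsec:new_quadrature}.

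Second, for each absolutely monotonic potential $f$ one constructs, exactly as in the proof of Theorem~\ref{thm:600}, a polynomial $h$ whose $C_n^{(3,0)}$-expansion is supported on $\{0\}\cup\Lambda$, which agrees with $f$ at every $t\in\mathcal A(\mathcal C)$ (with the appropriate multiplicities and first derivatives at the interior nodes), and which lies below $f$ on $[-1,1]$. The last property is obtained by the two-step comparison of Theorem~\ref{thm:600} and Lemma~\ref{lem:remaind}: one interpolates $f$ at the zeros of a node polynomial $g=(t^2-1)\prod_j(t-t_j)^{2e_j}\le 0$ and uses the sign of $f^{(\deg g)}$ on the range of $p$ in question together with the sign of the leading coefficient of $h$. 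The constraints ``$\widehat h_n=0$ for $1\le n\le\deg h$ and $n\notin\Lambda$'' then pin down $\deg h$ and the multiplicities $e_j$, just as the single constraint $\widehat h_6=0$ forced degree $8$ for the $600$-cell. Granting that $h$ is positive definite, the argument of Lemma~\ref{lem:lp} (applied with the weighted design $\mathcal C$) and Proposition~\ref{prop:pdmin} give, for every $\mu\in\P(\CP^4)$,
\[
   I_f(\mu)\ \ge\ I_h(\mu)\ \ge\ I_h(\sigma)\ =\ \widehat h_0\ =\ I_h(\mu_{\mathcal C})\ =\ I_f(\mu_{\mathcal C}),
\]
which is the claimed optimality; a uniqueness statement, as for the $600$-cell, would require additional work, since Theorem~\ref{t:uniq} is stated only for tight designs.

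The main obstacle is the positive definiteness of $h$, uniformly over the potentials considered. Since $-1\in\mathcal A(\mathcal C)$, one factors $t+1$ off and expands in the adjacent polynomials $C_n^{(4,1)}$, whose nonnegative-coefficient cone is closed under multiplication (Lemma~\ref{lem:Krein}, Proposition~\ref{prop:left_adjacent}); by Newton's formula \eqref{eq:newton_formula}, $h$ is then a combination of products of the partial node polynomials with coefficients equal to divided differences of $f$, the latter being $\ge 0$ by absolute monotonicity. The trouble is that, the node set carrying only $4$ rather than the $8$ distances of a tight design, the interior node polynomial is \emph{not} (a shift of) an adjacent Jacobi polynomial $Q_n^{(4,1)}$, so Proposition~\ref{prop:subproducts} is not directly available and one cannot conclude a priori that the partial node polynomials are positive definite. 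The plan is to verify this by hand: the relevant building blocks (products such as $(t+1)(t+\tfrac13)^2(t+\tfrac12)^2$ and their truncations) have rational coefficients, so their $C_n^{(3,0)}$-expansions can be computed exactly, and the finitely many coefficients $\widehat h_n(p)$ that enter the scheme can be certified nonnegative on each relevant interval of $p$ by interval arithmetic, exactly as in Lemma~\ref{lemma:600}. I expect this verification — together with the prior question of whether $\Lambda$ is rich enough for the interpolation scheme to close for all $p$ — to be the crux; it is entirely possible that the scheme closes only for $p$ in an explicit interval beyond $[4,6]$, in which case the conclusion would have to be weakened from full universal optimality to optimality of $\mu_{\mathcal C}$ for the $p$-frame energies on that interval.
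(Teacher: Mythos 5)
The statement you are asked about is not a theorem of the paper but an explicitly labelled \emph{conjecture}: the authors offer no proof, only numerical evidence and the observation that the $85$-point system is a weighted projective $3$-design realizing $\sum_{i,j}|\langle v_i,v_j\rangle|^6\omega_i\omega_j=1/35$. So there is no ``paper's own proof'' to compare against, and your text should be judged as a proposed proof of an open statement. As such, it is not a proof. It is a correct and well-informed \emph{plan} --- your identification of the parameters is accurate ($\alpha=3$, $\beta=0$ for $\CP^4$; $\mathcal A(\mathcal C)=\{-1,-\tfrac79,-\tfrac12,-\tfrac13,1\}$; four distances including the diameter, so a tight design with this annihilator would be a $7$-design while $\mathcal C$ is only a weighted $3$-design) --- but the two steps on which everything rests are left entirely unestablished. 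First, you do not compute the vanishing set $\Lambda$; without knowing which weighted harmonic averages of degrees $4$ through $7$ actually vanish, you cannot even write down the degrees of freedom available to the interpolant, and there is no a priori reason (unlike for the $600$-cell, where the vanishing at degrees $7,8,9$ is a known group-theoretic fact) that $\Lambda$ is rich enough for the Hermite scheme to close. Second, you correctly observe that Proposition~\ref{prop:subproducts} is unavailable because the interior node polynomial is not an adjacent Jacobi polynomial, so positive definiteness of the partial products --- the crux of Propositions~\ref{prop:2m} and~\ref{prop:2m-1} --- must be checked by brute force, and you do not check it. Your own closing sentence concedes that the scheme may only close on a restricted range of $p$, i.e.\ that the conclusion might be strictly weaker than universal optimality.

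A further point worth flagging: ``universally optimal'' in this paper, following \cite{cohnUni2007} and Theorem~\ref{thm:tight}, quantifies over \emph{all} absolutely monotonic potentials (of every degree, with the appropriate sign condition on the next derivative), not over a single family indexed by $p$. Even if your interpolation scheme were completed for the $p$-frame kernels on some interval, that would establish optimality for those kernels only; universal optimality requires the construction to succeed uniformly over an infinite-dimensional cone of potentials, which is a qualitatively stronger demand and is exactly why the authors left the statement as a conjecture. In short: the approach is the natural one and is consistent with the methods of Sections~\ref{sec:tight} and~\ref{sec:600cell}, but no part of the argument that could fail has actually been carried out, so the conjecture remains exactly as open after your proposal as before it.
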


 This is an example of one of the `highly symmetric tight frames', as was later demonstrated in \cite{MW19}.

\subsection{Other weighted designs}
\label{subsec:other_designs}

\subsubsection{\texorpdfstring{$11$ points in $\R^{3}$}{11 points in R3}}
\label{subsection:3-11}

It seems that as $p$ goes to $6$ from below, the limiting minimizing configuration on the sphere $\S^{2}$ is of the following form. Concisely, the system consists of all combinations of signs of the $6$ vectors below, 

$$ \left[ \begin{array}{ccc} 1 & 0 & 0 \\ 0 & 1 & 0 \\ 0 & 0 & 1 \\ \frac{2}{\sqrt{7}} & \sqrt{\frac{3}{7}} & 0 \\ \frac{2}{\sqrt{7}} & 0 & \sqrt{\frac{3}{7}} \\ \sqrt{\frac{1}{7}} & \sqrt{\frac{3}{7}} & \sqrt{\frac{3}{7}} \end{array} \right]$$ 
with the weights, 

$$\frac{2}{27}, \frac{1}{10}, \frac{1}{10}, \frac{49}{540}, \frac{49}{540},\frac{49}{540} $$ 
on each line. The off-diagonal inner products are then $$1/7,-1/7,5/7,-5/7,\sqrt{3/7},-\sqrt{3/7},0,\sqrt{1/7},-\sqrt{1/7},4/7,-4/7,\sqrt{4/7},-\sqrt{4/7}$$ appearing in number, $(10,18,10,10,14,10,14,6,2,4,4,6,2)$ respectively. From these  facts, one may check that the $11$ lines defined by these vectors forms a projective $3$-design. Notably, this is the same extremal code, which forms a minimal cubature formula and is found also in \cite[page 135]{Rez}.

\subsubsection{\texorpdfstring{$16$ points in $\R^{3}$}{16 points in R3}}
\label{subsection:3-16}

Lines through antipodal points in the union of a regular icosahedron with its dual dodecahedron. The frequencies of absolute values of inner products are $N(\sqrt{\frac{1}{15}(5-2\sqrt{5})})=60,$  $N(\frac{\sqrt{75+30\sqrt{5}}}{15})=60,$  $ N(\frac{1}{3})=60,$  $ N(\frac{1}{\sqrt{5}})=30,$  $ N(\sqrt{\frac{5}{9}})=30,$  and   $ N(1)=60$. The weights making this configuration a projective $4$-design are $\omega_1=5/84$ and $\omega_2=9/140$ for the icosahedron and dodecahedron vertices respectively.

\subsubsection{\texorpdfstring{$11$ points in $\R^{4}$}{11 points in R4}}
\label{subsection:4-11}

See Table~\ref{tab:4a11gram} for what appears to be the limiting minimizing configuration as $p$ goes to $6$ from below when minimizing over $\S^3$.

\begin{table}
\caption{The Gram matrix of the weighted projective $2$-design in $\R\mathbb{P}^3$ which appears as a minimizer as $p\rightarrow 4^{-}$ along with ordered weights, with each weight corresponding to the vector with inner products given in the adjacent row. In the matrix, $a$ and $b$ are $\frac{\sqrt{5}+1}{6}$ and $\frac{1}{6}\sqrt{(6-2\sqrt{5})}$, respectively.}  
$\displaystyle
\left[ \begin{array}{ccccccccccc|c}
                  \rule{0pt}{3ex}  \ 1 &      -\frac{2}{3} &          a &          a &          a &          a &          b &          b &          b &          b &  \frac{\sqrt{6}}{6} & \frac{3}{40} \\

     \rule{0pt}{3ex} -\frac{2}{3} &         \ 1 &         -b &         -b &         -b &         -b &         -a &         -a &         -a &         -a &  \frac{\sqrt{6}}{6} & \frac{3}{40} \\

       \rule{0pt}{3ex}  \ a &        -b &          1 &        \ \frac{1}{3} &      \ \frac{1}{3} &        -\frac{1}{3} & -\frac{\sqrt{2}}{3} & -\frac{\sqrt{2}}{3} &  \frac{\sqrt{2}}{3} &  \frac{\sqrt{2}}{3} &  \ \frac{\sqrt{6}}{6} & \frac{3}{32} \\

       \rule{0pt}{3ex}  \ a &        -b &        \ \frac{1}{3} &          \ 1 &        -\frac{1}{3} &       \ \frac{1}{3} &  \frac{\sqrt{2}}{3} & -\frac{\sqrt{2}}{3} &  \frac{\sqrt{2}}{3} & -\frac{\sqrt{2}}{3} &  \ \frac{\sqrt{6}}{6} & \frac{3}{32} \\

       \rule{0pt}{3ex}  \ a &        -b &       \ \frac{1}{3} &        -\frac{1}{3} &          \ 1 &        \ \frac{1}{3} &  -\frac{\sqrt{2}}{3} &  \frac{\sqrt{2}}{3} & -\frac{\sqrt{2}}{3} & \frac{\sqrt{2}}{3} &  \ \frac{\sqrt{6}}{6} & \frac{3}{32} \\

       \rule{0pt}{3ex}  \ a &        -b &       -\frac{1}{3} &       \ \frac{1}{3} &        \ \frac{1}{3} &          \ 1 & \frac{\sqrt{2}}{3} &  \frac{\sqrt{2}}{3} & -\frac{\sqrt{2}}{3} &  -\frac{\sqrt{2}}{3} &  \ \frac{\sqrt{6}}{6} & \frac{3}{32} \\

       \rule{0pt}{3ex}  \ b &        -a & -\frac{\sqrt{2}}{3} &  \frac{\sqrt{2}}{3} &  -\frac{\sqrt{2}}{3} & \frac{\sqrt{2}}{3} &          \ 1 &        \ \frac{1}{3} &        \ \frac{1}{3} &       -\frac{1}{3} & -\frac{\sqrt{6}}{6} & \frac{3}{32} \\

       \rule{0pt}{3ex}  \ b &        -a & -\frac{\sqrt{2}}{3} & -\frac{\sqrt{2}}{3} &  \frac{\sqrt{2}}{3} &  \frac{\sqrt{2}}{3} &        \ \frac{1}{3} &          \ 1 &       -\frac{1}{3} &        \ \frac{1}{3} & -\frac{\sqrt{6}}{6} & \frac{3}{32} \\

       \rule{0pt}{3ex}  \ b &        -a &  \frac{\sqrt{2}}{3} &  \frac{\sqrt{2}}{3} & -\frac{\sqrt{2}}{3} & -\frac{\sqrt{2}}{3} &        \ \frac{1}{3} &       -\frac{1}{3} &          \ 1 &        \ \frac{1}{3} & -\frac{\sqrt{6}}{6} & \frac{3}{32} \\

       \rule{0pt}{3ex}  \ b &        -a &  \frac{\sqrt{2}}{3} & -\frac{\sqrt{2}}{3} & \frac{\sqrt{2}}{3} &  -\frac{\sqrt{2}}{3} &       -\frac{1}{3} &        \ \frac{1}{3} &        \ \frac{1}{3} &          \ 1 & -\frac{\sqrt{6}}{6} & \frac{3}{32} \\

\rule{0pt}{3ex}  \frac{\sqrt{6}}{6} & \frac{\sqrt{6}}{6} &  \frac{\sqrt{6}}{6} &  \frac{\sqrt{6}}{6} &  \frac{\sqrt{6}}{6} &  \frac{\sqrt{6}}{6} & -\frac{\sqrt{6}}{6} & -\frac{\sqrt{6}}{6} & -\frac{\sqrt{6}}{6} & -\frac{\sqrt{6}}{6} &          \ 1 & \frac{1}{10} \\
                
 \end{array}\right]$
\label{tab:4a11gram}
\end{table}

\subsubsection{\texorpdfstring{$24$ points in $\R^{4}$}{24 points in R4}}
\label{subsection:4-24}

The regular $24$ cell, or alternatively the $D_{4}$ root system. The frequencies of absolute values of inner products are $N(0)=216,\ N(\frac{1}{\sqrt{2}})=144,\ N(\frac{1}{2})=192,\ \text{ and }\ N(1)=24$. The configuration is unweighted as a projective $3$-design.

\subsubsection{\texorpdfstring{$16$ points in $\R^{5}$}{16 points in R5}}
\label{subsection:5-16}

Lines through antipodal points in the following construction. Take all permutations of $\pm\frac{1}{\sqrt{30}}(-5,1,1,1,1,1,1)$ and $\frac{1}{\sqrt{6}}(1,1,1,-1,-1,-1)$ and consider these as vectors in the copy of $\S^{4}$ in $\S^{5}$ on the plane perpendicular to $(1,1,1,1,1,1)$. The frequencies of absolute values of inner products are $N(\frac{1}{3})=90,\ N(\frac{1}{5})=30,\ N(\frac{1}{\sqrt{5}})=120,\ \text{ and } N(1)=16$. The weights making this a projective $2$-design are $\omega_1=\frac{5}{84}$ and $\omega_2=\frac{9}{140}$ for the above parts respectively.

\subsubsection{\texorpdfstring{$41$ points in $\R^{5}$}{41 points in R5}}
\label{subsection:5-41}

An example of a design construction appearing in \cite{stroud1967some}. The configuration comprises of lines through antipodal points in the following construction. Let $A$ be the set of vectors which are permutations of $(\pm 1,0,0,0,0)$, $B$ permutations of $(\pm \sqrt{\frac{1}{2}},\pm \sqrt{\frac{1}{2}},0,0,0)$, and $C$ permutations of $(\pm\sqrt{\frac{1}{5}},\pm\sqrt{\frac{1}{5}},\pm\sqrt{\frac{1}{5}},\pm\sqrt{\frac{1}{5}},\pm\sqrt{\frac{1}{5}})$. The frequencies of absolute values of inner products are $N(0)=600,\ N(\frac{1}{5})=160,\ N(\frac{3}{5})=80,\ N(\sqrt{\frac{1}{5}},\ N(\sqrt{\frac{2}{5}})=320,\ \text{ and }\ N(1)=41$. The weights making this a projective $3$-design are $\omega_1=\frac{2}{105}$, $\omega_2=\frac{8}{315}$, and $\omega_{3}=\frac{25}{1008}$, on $A,B$, and $C$ respectively.

\subsubsection{\texorpdfstring{$22$ points in $\R^{6}$}{22 points in R6}}
\label{subsection:6-22}
Lines through antipodal points in a hemicube/cross polytope compound, where the hemicube is within the cube dual to the cross polytope. The frequencies of absolute values of inner products are $N(0)=30,\ N(\frac{1}{\sqrt{6}})=192,\ N(\frac{1}{3})=240,\ \text{ and }\ N(1)=22$. The weights making this a projective $2$-design are $\omega_1=3/64$ on the hemicube and $\omega_2=1/24$ on the cross-polytope.

\subsubsection{\texorpdfstring{$63$ points in $\R^{6}$}{63 points in R6}}
\label{subsection:6-63}
Lines through antipodal points in the union of minimal vectors of $E_{6}$ and its dual lattice, $E_{6}^{*}$. The frequencies of absolute values of inner products are $N(0)=1620,\ N(\frac{1}{4})=432,\ N(\frac{1}{2})=990,\ N(\sqrt{\frac{3}{8}})=864,\ \text{ and }\ N(1)=63$. The weights making this a projective $3$-design are $\omega_1=1/60$ and $\omega_2=2/135$ on the minimal vectors of $E_6$ and its dual, respectively.

\subsubsection{\texorpdfstring{$91$ points in $\R^{7}$}{91 points in R7}}
\label{subsection:7-91}
The configuration is projectively composed of the union of the minimal vectors of $E_{7}$ and its dual lattice, $E_{7}^{*}$. The frequencies of absolute values of inner products are $N(0)=3906,\ N(\frac{1}{27})=756,\ N(\frac{1}{8})=2016,\ N(\frac{\sqrt{3}}{9})=1512,\ \text{ and }\ N(1)=91$. The weights making this a projective $3$-design are $\omega_1=8/693$ and $\omega_2=3/308$ on the $E_7$ part and its dual, respectively. The cubature formula appears also in \cite{Nozaki12}.

\subsubsection{\texorpdfstring{$36$ points in $\R^{8}$}{36 points in R{8}}}
\label{subsection:8-36}
The edge midpoints of a regular simplex. The frequencies of absolute values of inner products are $N(\frac{2}{7})=756,\ N(\frac{5}{14})=504,\ \text{ and }\ N(1)=36$. This code is a projective $1$-design with equal weights.

\subsubsection{\texorpdfstring{$21$ points in $\C^{3}$}{21 points in C{3}}}
\label{subsection:3-21}
A structured union of a maximal (tight) simplex (equiangular tight frame, or ETF) of $9$ vectors and $4$ mutually unbiased bases (a $4$-MUB) of $12$ vectors. The frequencies of absolute values of inner products are $N(0)=96,\ N(\frac{1}{2})=72,\ N(\frac{1}{\sqrt{3}})=108,\ N(\frac{1}{\sqrt{2}})=144,\ N(1)=21$. The weights making this a projective $3$-design are $\omega_1=4/90$ on the $9$-ETF and $\omega_2=\frac{1}{20}$ on the $4$-MUB.  

\section{\texorpdfstring{$p$}{p}-frame energies in non-compact spaces}\label{sec:non}

In the previous sections, we used linear programs to bound energies on compact two-point homogeneous spaces. This approach can be extended to $p$-frame energies in non-compact spaces as well. Just as above, we consider $\F=\R, \C,$ or $\mathbb H$. In this setting, we consider  the set of probability measures $\P(\F^d)$ with the additional restriction

\begin{equation}\label{eqn:non-compact} \int_{\F^d} |x|^2 d\mu(x)=1 
\end{equation}
for each $\mu\in\P(\F^d)$. This normalization allows us to obtain a direct extension of above results for the spherical case, and by scaling, solutions to more general problems can be obtained from these results. A similar problem of finding maximizers for $p$-frame energies for $p\leq 2$, subject to the condition that measures be isotropic, was investigated in \cite{glazyrin2019moments}.

For a potential function$f= f( \tau(x,y)) = f(2 | \langle x, y \rangle|^2 -1)$ we define the energy with respect to measure $\mu\in\P(\F^d):$

$$I_f(\mu) =\int_{\F^d} \int_{\F^d} f(\tau(x,y)) d\mu(x)d\mu(y).$$

We will be concerned in this section only with the case that $f(\tau(x,y)) = |\langle x,y \rangle|^p$. The Jacobi polynomials for the projective spaces $\FP^{d-1}$, as above, are denoted $C_m$.

\begin{lemma}
	\label{lem:non-compact}
For $p\geq 2$, assume $f(t) = \left( \frac {t+1} {2}\right)^{\frac p 2}\geq h(t) = \sum\limits_{m=0}^{\infty} \widehat{h}_m C_m(t)$ for all $t\in [-1,1]$, where $\widehat{h}_m \geq 0$ for all $m\geq 0$. Then $I_f(\mu) \geq \widehat{h}_0$ for all $\mu\in\P(\F^d)$ satisfying \eqref{eqn:non-compact}.
\end{lemma}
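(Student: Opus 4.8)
\emph{The plan} is to run the Delsarte--Yudin linear programming argument one level above the sphere, treating the radial part of $\mu$ as an extra nonnegative weight that the addition formula will absorb. First I would decompose each nonzero $x\in\F^{d}$ as $x=r\omega$ with $r=|x|$ and $\omega=x/|x|\in\S_{\F}^{d-1}$, so that, writing $\tau(\omega,\eta)=2|\langle\omega,\eta\rangle|^{2}-1\in[-1,1]$ for the projective variable,
$$|\langle x,y\rangle|^{p}=r^{p}s^{p}\,|\langle\omega,\eta\rangle|^{p}=r^{p}s^{p}\,f(\tau(\omega,\eta))\ \geq\ r^{p}s^{p}\,h(\tau(\omega,\eta)),$$
where the inequality uses the hypothesis $f\geq h$ on $[-1,1]$ together with $r^{p}s^{p}\geq 0$. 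Integrating against $\mu\otimes\mu$ gives $I_{f}(\mu)\geq\iint r^{p}s^{p}\,h(\tau(\omega,\eta))\,d\mu(x)\,d\mu(y)$, so it remains to bound this last integral below by $\widehat h_{0}$.

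For that I would expand $h=\sum_{m}\widehat h_{m}C_{m}$ and insert the addition formula \eqref{eq:addition_formula} for $\Omega=\FP^{d-1}$, with the $Y_{m,k}$ taken real-valued as in the proof of Proposition~\ref{prop:pdmin}. Introducing the bounded continuous functions $\Phi_{m,k}(x):=|x|^{p}\,Y_{m,k}(x/|x|)$ (and $\Phi_{m,k}(0):=0$), term-by-term integration --- legitimate since $\widehat h_{m}\geq 0$ and the Jacobi series then converges uniformly, or trivially when $h$ is a polynomial, which is the case in all our applications --- yields
$$\iint r^{p}s^{p}\,h(\tau(\omega,\eta))\,d\mu\,d\mu\;=\;\sum_{m\geq 0}\frac{\widehat h_{m}}{\dim V_{m}}\sum_{k=1}^{\dim V_{m}}\Big(\int_{\F^{d}}\Phi_{m,k}\,d\mu\Big)^{2}.$$
Every term is nonnegative, so the right-hand side is at least its $m=0$ contribution, which, since $\dim V_{0}=1$ and $Y_{0,1}\equiv 1$, equals $\widehat h_{0}\big(\int_{\F^{d}}|x|^{p}\,d\mu(x)\big)^{2}$.

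Finally I would invoke Jensen's inequality: for $p\geq 2$ the map $t\mapsto t^{p/2}$ is convex on $[0,\infty)$, so $\int_{\F^{d}}|x|^{p}\,d\mu=\int_{\F^{d}}(|x|^{2})^{p/2}\,d\mu\geq\big(\int_{\F^{d}}|x|^{2}\,d\mu\big)^{p/2}=1$ by the normalization \eqref{eqn:non-compact}. As $\widehat h_{0}\geq 0$, this gives $\widehat h_{0}\big(\int_{\F^{d}}|x|^{p}\,d\mu\big)^{2}\geq\widehat h_{0}$, and chaining the inequalities produces $I_{f}(\mu)\geq\widehat h_{0}$.

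I do not anticipate a genuine obstacle; the one place to be careful is the bookkeeping that lets the radial weights $r^{p}s^{p}$ pass through the addition formula and collapse, in the $m=0$ mode, to $\big(\int|x|^{p}\,d\mu\big)^{2}$, together with the observation that it is precisely the hypothesis $p\geq 2$ that orients Jensen's inequality in the useful direction (for $p<2$ the estimate $\int|x|^{p}\,d\mu\geq 1$ reverses, matching the fact that the clean bound should fail there). The routine points --- a possible atom of $\mu$ at the origin, which is absorbed by the convention $\Phi_{m,k}(0)=0$ and contributes to neither side, the trivial case $I_{f}(\mu)=\infty$, and the interchange of sum and integral --- I would dispatch without difficulty.
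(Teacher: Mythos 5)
Your proposal is correct and follows essentially the same route as the paper: factor out the radial weights $r^p s^p$, replace $f$ by $h$, use positive definiteness of the $C_m$ to drop all modes except $m=0$, obtaining $\widehat h_0\big(\int |x|^p\,d\mu\big)^2$, and finish with Jensen's inequality and the normalization \eqref{eqn:non-compact}. The only cosmetic difference is that the paper first reduces to discrete measures by weak-$*$ density and then invokes positive definiteness directly, whereas you expand via the addition formula at the level of general measures; the substance is identical.
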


\begin{proof}
Since discrete masses are weak-$*$ dense in $\P(\F^d)$, it is sufficient to prove the inequality for them only. Let $\mu$ take the form $\mu=\frac{1}{N}\sum\limits_{i=1}^N \delta_{x_i}$, $x_i\in\F^{d}$ and set $y_i=\frac {x_i} {|x_i|}$. (Note, if $x_i$ is $0$ then we can assign an arbitrary unit vector for $y_i$). Then, 

$$I_f(\mu) =\frac 1 {N^2} \sum\limits_{i,j=1}^N |\langle x_i, x_j\rangle |^p = \frac 1 {N^2} \sum\limits_{i,j=1}^N |x_i|^p |x_j|^p |\langle y_i, y_j \rangle |^p  = \frac 1 {N^2} \sum\limits_{i,j=1}^N |x_i|^p |x_j|^p f( \tau( y_i, y_j))$$

$$ \geq \frac 1 {N^2} \sum\limits_{i,j=1}^N |x_i|^p |x_j|^p h(\tau(y_i,y_j)) = \frac {1}{N^2} \sum\limits_{m=0}^{\infty} \widehat{h}_m \sum\limits_{i,j=1}^N |x_i|^p |x_j|^p C_m (\tau(y_i,y_j)).$$

For any $m\geq 1$, $C_m$ is positive definite on $\FP^{d-1}$, so each sum $\sum\limits_{i,j=1}^N |x_i|^p |x_j|^p C_m(\tau(y_i,y_j))$ is non-negative. Thus,
$$I_f(\mu) \geq \widehat{h}_0 \frac 1 {N^2} \sum\limits_{i,j=1}^N |x_i|^p |x_j|^p C_0(\tau(y_i,y_j))= \widehat{h}_0 \left(\frac 1 N \sum\limits_{i=1}^N |x_i|^p \right)^2.$$

Since $p\geq 2$, 
$$\frac 1 N \sum\limits_{i=1}^N |x_i|^p \geq \left( \frac 1 N \sum\limits_{i=1}^N |x_i|^2 \right)^{\frac p 2},$$ 
holds by Jensen's inequality. The constraint \ref{eqn:non-compact} is equivalent to $\frac 1 N \sum\limits_{i=1}^N |x_i|^2=1$, and so combining all inequalities, we complete the proof of the lemma. \end{proof} 
Lemma \ref{lem:non-compact} gives that any linear programming bounds for $p$-frame energies applicable to the spherical/projective case will work in the non-compact setting as well. As a consequence of this approach we obtain the following result.

\begin{theorem}
	\label{thm:non-compact}
Let $ \mathcal C $ be a set of arbitrary unit representatives of a tight projective $M$-design, $M\geq 2$, in $\FP^{d-1}$ and $f(\tau(x,y))=|\langle x, y\rangle |^p$ with $p\in[2M-2,2M]$.  Then 
$$ \mu_{\mathcal C} = \frac{1}{|\mathcal C|} \sum_{x \in \mathcal C} \delta_{x} $$
is a minimizer of
$$ I_{f}(\mu) = \int_{\F^d} \int_{\F^d} f( \tau(x,y) ) d \mu(x) d \mu(y)$$
over the set of probability measures on $\F^d$ satisfying the constraint \ref{eqn:non-compact}.
\end{theorem}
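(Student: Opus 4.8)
The plan is to reduce to the compact projective case already settled in Theorem~\ref{thm:tight}, using Lemma~\ref{lem:non-compact} as the passage to $\F^d$. First I would record the relevant analytic properties of the kernel $f(t)=\bigl(\frac{1+t}{2}\bigr)^{p/2}$ on $\FP^{d-1}$: differentiation gives $f^{(k)}(t)=2^{-k}\prod_{j=0}^{k-1}(p/2-j)\,\bigl(\frac{1+t}{2}\bigr)^{p/2-k}$, so the sign of $f^{(k)}$ is that of $\prod_{j=0}^{k-1}(p/2-j)$. For $p\in[2M-2,2M]$, that is $p/2\in[M-1,M]$, this product is $\ge 0$ for $k\le M$ and $\le 0$ for $k=M+1$ (it vanishes identically at the endpoints $p=2M-2$ and $p=2M$). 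Hence $f$ is absolutely monotonic of degree $M$ with $f^{(M+1)}(t)\le 0$ on $(-1,1)$, and Theorem~\ref{thm:tight} applies with $\Omega=\FP^{d-1}$: its proof produces a positive definite Hermite interpolating polynomial $h=H[f,g]$ with $h(t)\le f(t)$ on $[-1,1]$, with $h=f$ on $\mathcal A(\mathcal C)=\{\tau(x,y):x,y\in\mathcal C\}$, and with $\widehat h_0=I_h(\sigma)=I_h(\mu_{\mathcal C})=I_f(\mu_{\mathcal C})$.

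With this $h$ in hand I would invoke Lemma~\ref{lem:non-compact}. Since $M\ge 2$ forces $p\ge 2M-2\ge 2$, and since $f\ge h$ on $[-1,1]$ with all Jacobi coefficients $\widehat h_m\ge 0$ (positive definiteness of $h$ on $\FP^{d-1}$ is exactly this by Proposition~\ref{prop:schon}, and the $C_m$ in the lemma are the same projective Jacobi polynomials), the lemma gives $I_f(\mu)\ge\widehat h_0$ for every $\mu\in\mathcal P(\F^d)$ obeying the normalization \eqref{eqn:non-compact}. It then remains to check that $\mu_{\mathcal C}$ is admissible and attains this bound: since every $x\in\mathcal C$ is a unit vector, $\int_{\F^d}\|x\|^2\,d\mu_{\mathcal C}(x)=\frac1{|\mathcal C|}\sum_{x\in\mathcal C}\|x\|^2=1$, so $\mu_{\mathcal C}$ satisfies \eqref{eqn:non-compact}; and for unit vectors $\tau(x,y)=2|\langle x,y\rangle|^2-1$, so $f(\tau(x,y))=|\langle x,y\rangle|^p$ and the value $I_f(\mu_{\mathcal C})$ computed on $\F^d$ equals the one computed on $\FP^{d-1}$, namely $\widehat h_0$. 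Chaining these, $I_f(\mu)\ge\widehat h_0=I_f(\mu_{\mathcal C})$ for all admissible $\mu$, which is the assertion.

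This argument has no genuinely difficult step — it transfers Theorem~\ref{thm:tight} through Lemma~\ref{lem:non-compact} — but the point requiring the most care is the behavior at the endpoints $p=2M-2$ and $p=2M$, where $f^{(M)}$ or $f^{(M+1)}$ vanishes identically; one must confirm that the remainder formula \eqref{eq:div_difference} and Newton's formula \eqref{eq:newton_formula} still yield $h\le f$ together with positive definiteness of $h$ in these degenerate cases. They do, since a vanishing derivative is still $\le 0$ and the divided differences entering \eqref{eq:newton_formula} stay nonnegative (some merely becoming zero); at $p=2M$ the kernel is itself a degree-$M$ positive definite polynomial and one may simply take $h=f$. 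A minor bookkeeping remark is that the hypothesis $p\ge 2$ of Lemma~\ref{lem:non-compact}, needed for the Jensen step that compares the averages $\frac1N\sum_i\|x_i\|^p$ and $\bigl(\frac1N\sum_i\|x_i\|^2\bigr)^{p/2}$, holds automatically here.
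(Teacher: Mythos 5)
Your proposal is correct and follows essentially the same route as the paper: the authors likewise take the Hermite interpolant $H[f,g]$ constructed in the proof of Theorem~\ref{thm:tight} and chain $I_f(\mu)\geq I_{H[f,g]}(\mu)\geq I_{H[f,g]}(\sigma)=I_{H[f,g]}(\mu_{\mathcal C})=I_f(\mu_{\mathcal C})$, with the middle inequality supplied by Lemma~\ref{lem:non-compact} applied to $h=H[f,g]$. Your extra care with the derivative signs at the endpoints $p=2M-2,\,2M$ and with verifying that $\mu_{\mathcal C}$ satisfies the normalization constraint is sound but does not change the argument.
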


\begin{proof}
For the proof, we take $f(t)= \left( \frac {t+1} {2}\right)^{\frac p 2}$, $h$ to be the interpolating polynomial $H[f,g]$ used in the proof of Theorem~\ref{thm:tight} or $h$ used in the proof of Theorem \ref{thm:600}, and $h^*(x,y) = |x|^p |y|^p h( \tau( \frac{x}{|x|}, \frac{y}{|y|}))$ for all $x,y \in \mathbb{F}^d $. We follow the same line of reasoning as before to find
\begin{equation}
I_f(\mu) \geq  I_{h^*}(\mu) \geq I_{h^*}(\sigma^*) = I_{h^*}(\mu_{\mathcal C}) = I_f(\mu_{\mathcal C}),
\end{equation}
where $\sigma^*$ is the uniform probability measure on the unit sphere in $\mathbb{F}^d$ (and so projects to the Haar measure on $\mathbb{FP}^{d-1}$).

All inequalities are verified in a similar manner as in the previous section, except for $I_{h^*}(\mu) \geq I_{h^*}(\sigma^*)$. This part follows from Lemma \ref{lem:non-compact} applied to $h^*$ because $I_{h^*}(\sigma^*) = I_h(\sigma)$ is precisely $\widehat{h}_0$ for positive definite functions $h$.
\end{proof}

{\it Note:} A similar result may be derived in the same manner as above for $\mathcal{C}$, a set of arbitrary unit representatives of the $600$-cell in $\mathbb{RP}^3$ and $p \in [8,10]$, in light of Theorem \ref{thm:600}.

\section{\texorpdfstring{Mixed volume inequalities}{Mixed volume inequalities}}
\label{sec:mixed_volume}

In this section we demonstrate an intriguing connection between the $p$-frame energy and convex geometry. We begin by briefly recalling some of the basic notions from convex geometry. See \cite[Ch. 2]{koldobsky} for a more thorough development. 

\indent Let $K$ be a convex body and $\sigma_{K}(u)$ be the surface measure of $K$, that is, a measure supported on the unit sphere $\S^{d-1}$, satisfying 
$$\sigma_{K}(B)=| \{x\in\partial K,\ \text{the outer unit  normal to }K\ \text{at}\ x\ \text{belongs to }B \}|_{d-1}$$
for all Borel sets $B\subset\S^{d-1}$, where $|\cdot|_{d-1}$ denotes the  $(d-1)$-dimensional  Hausdorff measure. For example, if $K$ is a polytope with faces $\{K_i\}_{i=1}^m$ and normals $\{{ n}_{i}\}_{i=1}^m$, $\sigma_{K}$ is atomic with mass $|K_{i}|_{d-1}$ at each ${ n}_i$,
$$\sigma_{K} =\sum\limits_{i=1}^m |K_{i}|_{d-1}\delta_{{ n}_i},$$
and if $K= \mathbb B$ is the $d$-dimensional unit ball, then $\sigma_K$  simply coincides with the standard (unnormalized) uniform surface area measure  $\sigma_K (B) =  |B|_{d-1} = \frac{2\pi^{d/2}}{\Gamma (d/2)} \sigma (B)$. 

\indent Recall that for a convex body, $K\subset\R^{d}$, the {\em{support function}} $h_{K}(u)$ of $K$ takes the form 
$$ h_{k}(u)=\sup\limits_{v\in K} \langle u,v \rangle.$$ Given two convex bodies $K$ and $L$, and $p\geq 1$, define 
$$V_p(K,L)=\frac{p}{d}\lim\limits_{\epsilon\rightarrow 0} \frac{|K+_{p} \epsilon L|-|K|}{\epsilon},$$
where $ K+_{p} \epsilon L$ is the convex body with support function $h_{K+_{p} \epsilon L}(u)$ satisfying 
$$h_{K+_{p} \epsilon L}(u)^p=h_{K}(u)^p+\epsilon h_{L}(u)^p.$$ 
 Note that for $L=\mathbb{B}_d$ is the unit ball and $p=1$, the above quantity is just the definition of the surface area of $K$. In general, $V_{p}(K,L)$ is known as the {\em{$L_p$-mixed volume of $K$ and $L$}}. The following alternative integral representation for $V_{p}(K,L)$ is known
$$V_{p}(K,L)=\frac{1}{d}\int_{\S^{d-1}}h_{L}(u)^{p} d\sigma_{K}^p(u),$$
where $d\sigma_K^p(u)=h_{K}(u)^{1-p}d\sigma_K(u)$, so that in particular $d\sigma_K^1(u)=d\sigma_K(u)$ . \\

\indent Now, call a probability measure $\mu$ supported on $\S^{d-1}$ admissible, if it is symmetric and not concentrated on a subspace. A  classical result which follows from Minkowski's theorem, says that any admissible measure can be realized as the surface area measure of a symmetric convex body; see more in \cite[Ch. 7]{schneider}.

\indent The {\it projection body} $\Pi K$ of a convex body $K$ is defined to be a body such that for each $u\in\S^{d-1}$  
$$h_{\Pi K}(u)=\left|K|u^{\perp} \right|_{d-1},$$ 
that is, the support function of $\Pi K$  equals  the volume of the projection of $K$ onto the hyperplane orthogonal to  $u$ \cite{bourgain}. Since 
$$ \left|K|u^{\perp} \right|_{d-1}=\frac{1}{2}\int_{\S^{d-1}} |\langle u,v \rangle| d\sigma_{K}(v), $$ 
the identities
\begin{align*}
I_{|t|}(\sigma_K) & =\int_{\S^{d-1}}\int_{\S^{d-1}}|\langle u,v \rangle|\, d\sigma_{K}(u)d\sigma_{K}(v)  =  2  \int_{\S^{d-1}}   \left|K|u^{\perp} \right|_{d-1}  d\sigma_{K}(u)\\
& =    2  \int_{\S^{d-1}}   h_{\Pi K}(u)  \,  d\sigma_{K}(u)   =  2d  \, V_1 ( K, \Pi K )
\end{align*}
finally establish the connection between $L_1$-mixed volumes and $1$-frame energies. 

Our main theorem, Theorem  \ref{t:mainsphere}, shows that all minimizers of $I_{|t|^{p}}(\mu)$ over probability measures are admissible when a corresponding tight design exists, as this measure is both discrete and can be taken to be symmetric. From this, we obtain what appears to be a new observation, namely the following:
\begin{proposition}
The minimum  of the quantity  $$\frac{V_1(K,\Pi K)}{|\partial K|^2}$$ over all symmetric convex bodies in $\R^d$ is achieved when  $K$ is a cube.
\end{proposition}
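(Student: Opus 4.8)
The plan is to recognize the proposition as an essentially immediate corollary of Theorem~\ref{t:mainsphere}. The key ingredient is the chain of identities displayed just above, which for an arbitrary convex body $K$ gives $I_{|t|}(\sigma_K) = 2d\, V_1(K,\Pi K)$. Since $|\partial K|$ is precisely the total mass $\sigma_K(\mathbb S^{d-1})$, I would normalize $\sigma_K$ to the probability measure $\bar\sigma_K := \sigma_K/|\partial K| \in \mathcal P(\mathbb S^{d-1})$ and use that $I_f(\cdot)$ is homogeneous of degree $2$ in its argument to get
$$\frac{V_1(K,\Pi K)}{|\partial K|^2} \;=\; \frac1{2d}\, I_{|t|}(\bar\sigma_K).$$
Thus, up to the universal constant $1/(2d)$, minimizing the left-hand side over symmetric convex bodies is the same as minimizing the $1$-frame energy $I_{|t|}$ over the probability measures of the form $\bar\sigma_K$.

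Next I would invoke Theorem~\ref{t:mainsphere}, part (i), with $p = 1$, which lies in the admissible range $[2t-2,2t]=[0,2]$ for $t=1$, applied to the tight spherical $3$-design given by the cross-polytope $\{\pm e_1,\dots,\pm e_d\}\subset \mathbb S^{d-1}$ (equivalently, one may use part (ii) with the orthonormal basis viewed as a tight projective $1$-design in $\RP^{d-1}$). This yields that $\mu_\star := \frac1{2d}\sum_{i=1}^d(\delta_{e_i}+\delta_{-e_i})$ minimizes $I_{|t|}$ over \emph{all} of $\mathcal P(\mathbb S^{d-1})$, hence a fortiori over the subclass of measures $\bar\sigma_K$; consequently $\frac{V_1(K,\Pi K)}{|\partial K|^2}\ge \frac1{2d}I_{|t|}(\mu_\star)$ for every symmetric convex body $K$.

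It then remains to exhibit a body attaining this lower bound, and the cube does so: for $Q = [-1,1]^d$ a direct computation gives $\sigma_Q = 2^{d-1}\sum_{i=1}^d(\delta_{e_i}+\delta_{-e_i})$ and $|\partial Q| = d\,2^d$, so $\bar\sigma_Q = \mu_\star$ and therefore $\frac{V_1(Q,\Pi Q)}{|\partial Q|^2} = \frac1{2d}I_{|t|}(\mu_\star)$, which is the minimum. If one additionally wants that among symmetric bodies the cube is the \emph{only} minimizer, one would invoke the uniqueness statement of Theorem~\ref{t:uniq} (or the uniqueness-among-symmetric-measures result of \cite{Ehler2012}) together with Minkowski's existence theorem to transfer uniqueness from the measure $\mu_\star$ back to the body realizing it.

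I do not expect a genuine obstacle: once the energy identity $I_{|t|}(\sigma_K)=2d\,V_1(K,\Pi K)$ and Theorem~\ref{t:mainsphere} are available, the argument is pure bookkeeping. The only point requiring a little care is that $p=1$ is an endpoint of the open interval $(0,2)$ — but it is included in the closed interval $[0,2]$ for which Theorem~\ref{t:mainsphere} guarantees optimality of the cross-polytope, and the cross-polytope is the classical tight spherical $3$-design, so the hypotheses are genuinely met. The only substantively new observation is the identification $\bar\sigma_Q = \mu_\star$: the normalized surface area measure of a cube coincides with the uniform measure on the vertices of the cross-polytope.
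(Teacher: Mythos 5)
Your proposal is correct and follows essentially the same route as the paper: the paper's (very terse) proof likewise combines the identity $I_{|t|}(\sigma_K)=2d\,V_1(K,\Pi K)$ with the observation that the normalized surface measure of the cube is the uniform measure on the vertices of the cross-polytope, which minimizes the $1$-frame energy by Theorem~\ref{t:mainsphere}. Your added bookkeeping (the normalization $\bar\sigma_K$, the scale-invariance, and the explicit computation $\bar\sigma_Q=\mu_\star$) and the optional uniqueness remark are consistent with what the paper leaves implicit.
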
    
Indeed, it is easy to see that, when $K$ is a cube, the surface measure $\sigma_K$ is equally distributed on the vertices of a cross-polytope, which minimizes the $p$-frame energy for $p=1$. 

One may also define $L^p$-intersection bodies $\Pi_p K $ \cite{Lutwak,Lutwak2}  in a similar fashion 
and obtain analogous relations for other values of $p$. Doing so allows one to infer similar  statements  for $V_p(K,\Pi_p K)/|\partial K|^2$ for the several dimensions and ranges of $p$ considered in this manuscript (for which tight designs exist), as well as pose conjectures corresponding to the numerically obtained minimizers. We anticipate, in particular, in accordance with Conjecture \ref{conj:discrete},  that whenever  $p$ is not an even integer,  this quantity is always minimized by a convex body which is polyhedral (with discrete surface measure).

\section{Other linear programming applications} 
\label{sec:other_kernels}

\subsection{Other energy problems}
We now discuss some  problems  related to minimization of $p$-frame energies and other energies with degree $M$ absolutely monotonic potentials. For $p\not\in 2\mathbb N$, the potential functions $f(t)= \frac1{2^{p/2}} \cdot (1+t)^{p/2}$, corresponding to the $p$-frame energy,    have the property that not only their derivatives switch signs for  large enough orders, but also the coefficients in their  Jacobi expansion have alternating signs. While the proof of optimality depended heavily on the former, we look into the latter property now. 

In a sense, the most natural polynomial potential functions to consider when approximating $f(t)$ are of the form 
\begin{equation}\label{eq:gfunc}
g(t)=\sum\limits_{m=0}^{k} \widehat{f}_m C_{m}(t)-\beta C_{k+1}(t),\end{equation}
where $\widehat{f}_m, \beta \geq 0$. Because any earlier truncation of function $f$ is positive definite, so that $I_{f}(\mu)$ is minimized by surface measure, the point at which the first negative coefficient comes in is the first interesting truncation to consider. At first glance, it may seem that minimizers of $I_{g}(\mu)$ might act like those  of $I_f (\mu)$. When $\beta$ is too large however, this cannot be true, for in this case a single Dirac mass $\nu=\delta_{x}$ gives 
$$I_{g}(\nu)=g(\tau(x,x))=\sum\limits_{m=0}^k \widehat{f}_m-\beta,$$
which can be smaller than the value obtained on any other measure. Instead of looking  at the energy with potential  $g$, we shall  considering the limiting problem of constrained optimization
\begin{equation}\label{eq-constr}  \max\limits_{\mu\in\mathcal{P}(\Omega)} \int_{\Omega}\int_{\Omega}  C_{k+1}(\tau(x,y)) d\mu(x)d\mu(y)\ 
\text{s.t.}\  \int_{\Omega}\int_{\Omega} C_{j}(\tau(x,y)) d\mu(x)d\mu(y)=0,\ j=1,...,k, 
\end{equation}
which is again minimized by tight designs, as the below argument shows.

\begin{theorem} If a tight  $k$-design $\mathcal{C}$ in $\Omega$ exists, then the  measure $\mu_{\mathcal C}$ which distributes mass evenly among the points of $\mathcal C$ solves the optimization problem \eqref{eq-constr}   over probability measures. 

\begin{proof} Set $f = C_{k+1}$. If $k=2m$, we construct the polynomial $h$ by applying Hermite interpolation to  $f$ at $t=1$,  and to $f$ and $f'$ at the other $m$ values of $\mathcal A (\mathcal C) = \{ \tau(x,y)| \, x,y \in \mathcal C, x\neq y\}$, so that $\textup{deg}\, h \le 2m =k$. When $k=2m-1$, i.e. $\mathcal C$ contains antipodal pairs, we apply interpolation of order $1$ at $t=\pm 1$ and of order $2$ at the other $m-1$ values of  $\mathcal A (\mathcal C)$, resulting in $\textup{deg}\, h \le 1+2(m-1) =2m-1 =k$. The remainder formula \eqref{eq:div_difference} then gives that the difference
$$ f(t) - h(t)  = \frac{f^{(k+1)} (\xi)}{(k+1)!} (t-1)  \cdot \begin{cases} \prod\limits_{\alpha \in \mathcal A (\mathcal C)\setminus \{ 1\} } (t-\alpha)^2  \\ \prod\limits_{\alpha \in \mathcal A (\mathcal C)\setminus \{ \pm 1\} } (t-\alpha)^2 \cdot (t+1) \end{cases} $$ is non-positive for $t\in [-1,1]$. This holds because $f^{(k+1)} (\xi) >0$, as it is simply the leading coefficient of $f=C_{k+1}$.  Since $h$ is a polynomial of degree $k$, the constraints in \eqref{eq-constr} imply that for any admissible $\mu \in \mathcal P (\Omega)$, $I_h (\mu) = \widehat{h}_0 = I_h (\sigma).$ We therefore obtain  
$$I_{f}(\mu)\leq I_{h}(\mu)=I_{h}(\sigma)=I_{h}(\mu_{\mathcal{C}})=I_{f}(\mu_{\mathcal{C}}),$$
where the penultimate equality relies on the fact that $\mathcal C$ is a $k$-design, and the last one follows from interpolation. \end{proof}

\end{theorem}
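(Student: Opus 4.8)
The plan is to run the Delsarte--Yudin linear programming scheme of Section~\ref{subsec:tight}, but with the direction of the polynomial bound reversed to suit a \emph{maximization} problem with equality constraints. Set $f = C_{k+1}$. I will look for a polynomial $h$ with $\deg h \le k$ that (a) satisfies $h(t) \ge f(t)$ for all $t \in [-1,1]$, and (b) agrees with $f$ at every value in $\mathcal{A}(\mathcal{C}) = \{\tau(x,y) : x,y \in \mathcal{C}\}$. The key structural observation --- which makes this argument genuinely easier than Theorem~\ref{thm:tight}, in that no positive-definiteness of $h$ is needed --- is that for \emph{any} probability measure $\mu$ satisfying the constraints in \eqref{eq-constr} one has $I_h(\mu) = \widehat{h}_0$, a constant independent of $\mu$, because $h = \sum_{n=0}^k \widehat{h}_n C_n$ and the constraints annihilate $\iint C_j\,d\mu\,d\mu$ for $1 \le j \le k$. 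In particular $\widehat{h}_0 = I_h(\sigma) = I_h(\mu_{\mathcal{C}})$, the last equality because $\mathcal{C}$ is a $k$-design.

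Next I would construct $h$ by Hermite interpolation, split by the parity of $k$ exactly as in Propositions~\ref{prop:2m} and~\ref{prop:2m-1}. If $k = 2m$, then $\mathcal{A}(\mathcal{C})$ consists of $t = 1$ together with $m$ further values; interpolating $f$ to order $1$ at $t=1$ and to order $2$ at each of the remaining $m$ points imposes $2m+1$ conditions, giving $\deg h \le 2m = k$. If $k = 2m-1$, then $\mathcal{C}$ is antipodal so $\pm 1 \in \mathcal{A}(\mathcal{C})$; interpolating $f$ to order $1$ at $t = \pm 1$ and to order $2$ at the remaining $m-1$ values imposes $2m$ conditions, giving $\deg h \le 2m-1 = k$. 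Property (b) holds by construction, so $I_h(\mu_{\mathcal{C}}) = I_f(\mu_{\mathcal{C}})$. For property (a) I would invoke the remainder formula~\eqref{eq:div_difference}:
\[
 f(t) - h(t) = \frac{f^{(k+1)}(\xi)}{(k+1)!}\,g(t),
\]
where $g(t) = (t-1)\prod_{\alpha \in \mathcal{A}(\mathcal{C})\setminus\{1\}}(t-\alpha)^2$ when $k = 2m$ and $g(t) = (t^2-1)\prod_{\alpha \in \mathcal{A}(\mathcal{C})\setminus\{\pm1\}}(t-\alpha)^2$ when $k = 2m-1$. In both cases $g(t) \le 0$ on $[-1,1]$, since the product of squared factors is nonnegative while $t-1 \le 0$ (resp.\ $t^2 - 1 \le 0$); and $f^{(k+1)}(\xi) > 0$ because $f = C_{k+1}$ has degree exactly $k+1$ with positive leading coefficient, so $f^{(k+1)}$ is the constant $(k+1)!$ times that coefficient. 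Hence $f \le h$ on $[-1,1]$.

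Finally I would assemble, for every admissible $\mu$,
\[
 I_f(\mu) \le I_h(\mu) = \widehat{h}_0 = I_h(\sigma) = I_h(\mu_{\mathcal{C}}) = I_f(\mu_{\mathcal{C}}),
\]
the first step from $f \le h$, the second from the constraint-forced constancy of $I_h$, the fourth because $\mathcal{C}$ is a $k$-design, and the last from the interpolation agreement (b); this shows $\mu_{\mathcal{C}}$ is a maximizer. I do not expect a genuine obstacle here: the only points requiring care are the bookkeeping that keeps $\deg h \le k$ and the sign of $g$ on $[-1,1]$, both already handled by the toolkit of Sections~\ref{subsec:3.2}--\ref{subsec:tight}. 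If anything, the subtlety worth flagging is conceptual --- recognizing that the equality constraints of~\eqref{eq-constr} let one bypass positive-definiteness of the interpolant entirely, so that $h$ need only dominate $f$ on $[-1,1]$ and match it on $\mathcal{A}(\mathcal{C})$.
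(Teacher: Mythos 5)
Your proposal is correct and follows essentially the same route as the paper: the same Hermite interpolation scheme split by the parity of $k$, the same sign analysis of the remainder via \eqref{eq:div_difference} using the positivity of the leading coefficient of $C_{k+1}$, and the same chain $I_f(\mu)\le I_h(\mu)=\widehat{h}_0=I_h(\sigma)=I_h(\mu_{\mathcal C})=I_f(\mu_{\mathcal C})$. The point you flag as the conceptual subtlety --- that the equality constraints in \eqref{eq-constr} force $I_h(\mu)=\widehat{h}_0$ for every admissible $\mu$, so positive definiteness of $h$ is not needed --- is exactly how the paper's proof dispenses with that step as well.
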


Note that the argument for uniqueness applies for the above problem, much  as it did in the case of  degree $M$ absolutely monotonic functions $f$. The difference lies in the fact  that here the design condition arises from the constraints. 

It is possible to show that generally,  both for  problem \eqref{eq-constr} above and the problem of minimizing $g$-energy for $g$ as in \eqref{eq:gfunc}, there exist finitely supported minimizing measures. The reader familiar with the geometry of moment constrained probability measures will not be surprised by this since the extreme points of sets of such measures are finitely supported. The complete argument for existence of discrete measures can be found in \cite{bilyk2019energy}, but a quick sketch is as follows. Each problem above has solutions which may be found by re-writing the problem as maximization of a convex functional over the set of  moment constrained probability measures (the second problem \eqref{eq-constr} is already exactly in such form) and thus the maximum should be attained on an extreme point, i.e. a discrete measure. In both cases, in addition to the existence of a discrete minimizer,  one can obtain quantitative upper bounds on the support size in terms of the number of constraints.

\subsection{Causal variational principle}\label{sec:causal}

Define the kernel 
\begin{equation}\label{eq:CausalVar}
F(t) = F_{\tau} (t) := \max \{ 0, 2\tau^2  \big(1+t \big) \big( 2 - \tau^2 (1-t) \big) \}.
\end{equation}
for $\tau>0$. The  minimization problem for the energy 
\begin{equation}\label{eq:CausalVarEn}
I_{F}(\mu) = \int_{\mathbb{S}^2} \int_{\mathbb{S}^2} F (\langle x,y \rangle )  d\mu(x)  d\mu (y)
\end{equation}
is known as the {\em{causal variational principle}} on the sphere and is connected to relativistic quantum field theory.  It is conjectured in \cite{finsterSupport2013} that there exist discrete minimizers for $\tau \ge 1$ and, moreover, that all the minimizers of \eqref{eq:CausalVarEn}
are discrete whenever  $\tau >\sqrt{2}$. The background on this problem can be found in \cite{finsterSupport2013, BFSM}.

Here we confirm this conjecture for two values of $\tau >0$, for which we can show that the cross-polytope or orthoplex and the icosahedron indeed minimize the energy. These  minimizing measures were suggested by numerical experiments in \cite{finsterSupport2013}.

\subsubsection{Cross-polytope: \texorpdfstring{$\tau = \sqrt{2}$}{tau = sqrt(2)}} When $\tau = \sqrt{2}$,  we have  $$ \mathcal L (t) = \max \{ 0, 8t^2 +8t \},$$ and thus  $\mathcal L (0) = 0$. 
Define the measure $$\nu = \frac16 \sum_{i=1}^3 \bigg( \delta_{e_i} + \delta_{-e_i} \bigg),$$ where $\{e_1, e_2, e_3\}$ is an orthonormal basis of $\mathbb R^3$, i.e. $d\nu$ is a measure whose mass is equally concentrated in the vertices of a cross-polytope. Then we have, 

\begin{proposition}\label{p:tau1} The measure $\nu$ is a minimizer for the energy, $I_{\mathcal{L}}$, over probability measures on $\S^2$ for $\tau = \sqrt{2}$.
\end{proposition}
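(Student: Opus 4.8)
The plan is to apply the Delsarte--Yudin linear programming bound of Lemma~\ref{lem:lp}, exactly as in the proofs of Theorems~\ref{thm:tight} and~\ref{thm:600}, with an auxiliary polynomial read off directly from the piecewise formula for $\mathcal L$. When $\tau=\sqrt2$ we have $\mathcal L(t)=\max\{0,\,8t^2+8t\}$, so $\mathcal L(t)=0$ on $[-1,0]$ and $\mathcal L(t)=8t^2+8t$ on $[0,1]$. The candidate certificate is the truncation $h(t):=8t^2+8t=8t(t+1)$, which satisfies $h(t)\le\mathcal L(t)$ for all $t\in[-1,1]$ simply by definition of the maximum, with equality precisely on $\{-1\}\cup[0,1]$.

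First I would verify that $h$ is positive definite on $\S^2$. On $\S^2=\S^{d-1}$ with $d=3$ the associated Jacobi polynomials have $\alpha=\beta=0$ and are the Legendre polynomials $C_n=P_n$ (normalized so $P_n(1)=1$). From $t=P_1(t)$ and $t^2=\tfrac23P_2(t)+\tfrac13P_0(t)$ one obtains $h=\tfrac83P_0+8P_1+\tfrac{16}{3}P_2$, and since all coefficients are nonnegative, Proposition~\ref{prop:schon} gives that $h$ is positive definite; in particular $\widehat h_0=\tfrac83$. Next I would note that the values of $\tau(x,y)=\langle x,y\rangle$ among the six vertices $\{\pm e_1,\pm e_2,\pm e_3\}$ of the cross-polytope are exactly $1$ (for $x=y$), $-1$ (for antipodal pairs), and $0$ (for the remaining, orthogonal, pairs), and that $h$ matches $\mathcal L$ at each of them: $h(1)=16=\mathcal L(1)$, $h(-1)=0=\mathcal L(-1)$, $h(0)=0=\mathcal L(0)$. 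Since the cross-polytope is a (tight) spherical $3$-design on $\S^2$ --- hence in particular a $k$-design with $k=2=\deg h$, because $\sum_i\langle x,e_i\rangle^2$ is constant on $\S^2$ and the configuration is antipodal --- part~\eqref{lp2} of Lemma~\ref{lem:lp} applies verbatim and yields $I_{\mathcal L}(\mu)\ge I_{\mathcal L}(\nu)$ for every $\mu\in\mathcal P(\S^2)$. Equivalently, one may combine the chain $I_{\mathcal L}(\mu)\ge I_h(\mu)\ge I_h(\sigma)=\widehat h_0$ coming from Proposition~\ref{prop:pdmin} with the direct computation $I_{\mathcal L}(\nu)=\tfrac1{36}\bigl(6\cdot 16+6\cdot 0+24\cdot 0\bigr)=\tfrac83=\widehat h_0$.

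There is essentially no serious obstacle here: unlike the $600$-cell case, no higher-degree Hermite interpolation or interval-arithmetic positivity check is required, since the definition of $\mathcal L$ as a maximum already hands us a degree-$2$ polynomial that is tangent to $\mathcal L$ on all of $[0,1]$ and lies below it on $[-1,0]$. The only mild checks are the (two-line) Legendre expansion of $h$ and the fact that $\{\pm e_i\}$ is a spherical $2$-design. If desired, the family $h(t)=at^3+8t^2+(8-a)t$ with $a\in[0,4]$ provides a one-parameter set of valid certificates, all with $\widehat h_0=\tfrac83$, but the choice $a=0$ above is the cleanest.
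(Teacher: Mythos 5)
Your proof is correct and follows essentially the same route as the paper's: the paper's appendix uses exactly the certificate $H(t)=8t^2+8t$, verifies $H\le\mathcal L$ with equality at $t\in\{0,\pm1\}$, notes positive definiteness and that the cross-polytope is a $3$-design, and concludes via the same chain $I_{\mathcal L}(\mu)\ge I_H(\mu)\ge I_H(\sigma)=I_H(\nu)=I_{\mathcal L}(\nu)$. Your explicit Legendre expansion and the direct evaluation $I_{\mathcal L}(\nu)=\tfrac83$ are correct supplementary checks.
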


\subsubsection{Icosahedron\texorpdfstring{: $\displaystyle{ \tau^2 = \frac{2\sqrt{5}}{\sqrt{5} -1 }}$}{}\,} This value of $\tau$ is chosen so 
that $\mathcal L_\tau (1/\sqrt{5}) = 0$.  
Let $\mathcal C\subset \S^2$ be the vertices of a regular icosahedron and let 
$$\displaystyle{\nu = \frac1{12} \sum_{x\in \mathcal C}  \delta_x}$$
be the uniform measure on the vertices of the icosahedron. 

\begin{proposition}\label{p:tau2} The measure $\nu$ is a minimizer for the energy, $I_{\mathcal{L}}$, over probability measures on $\S^2$ for $\tau^2= \frac{2\sqrt{5}}{\sqrt{5} -1}$.
\end{proposition}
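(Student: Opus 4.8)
The plan is to apply the linear programming bound of Lemma~\ref{lem:lp}, part~\eqref{lp2}, with $\Omega=\S^2$ and $f=\mathcal{L}$. It suffices to exhibit a polynomial $h$ of degree at most $5$ that is positive definite on $\S^2$, satisfies $h(t)\le \mathcal{L}(t)$ for all $t\in[-1,1]$, and agrees with $\mathcal{L}$ at every value of $\langle x,y\rangle$ occurring between vertices $x,y$ of the icosahedron $\mathcal C$ (the case $x=y$ included). Since $\mathcal C$ is centrally symmetric with circumradius $1$, these values are exactly $\{-1,\,-\tfrac1{\sqrt5},\,\tfrac1{\sqrt5},\,1\}$, and the choice $\tau^2=\frac{2\sqrt5}{\sqrt5-1}$ — equivalently $\tau^2(1-\tfrac1{\sqrt5})=2$ — forces $\mathcal{L}$ to vanish at $-1$, $-\tfrac1{\sqrt5}$, and $\tfrac1{\sqrt5}$. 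Granting such an $h$, the argument closes as in Lemma~\ref{lem:lp}: $\mathcal C$ is a spherical $5$-design, so $I_h(\nu)=I_h(\sigma)=\widehat h_0$; since $h$ and $\mathcal{L}$ coincide on the inner products occurring in $\mathcal C$, $I_h(\nu)=I_{\mathcal{L}}(\nu)$; and by Proposition~\ref{prop:pdmin} together with $\mathcal{L}\ge h$, for every $\mu\in\P(\S^2)$ one has $I_{\mathcal{L}}(\mu)\ge I_h(\mu)\ge I_h(\sigma)=I_{\mathcal{L}}(\nu)$.

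To build $h$, observe that the quadratic $q(t):=2\tau^2(1+t)\bigl(2-\tau^2(1-t)\bigr)$ has roots $-1$ and $\tfrac1{\sqrt5}$, is negative on $(-1,\tfrac1{\sqrt5})$, and positive on $(\tfrac1{\sqrt5},1]$, so that $\mathcal{L}\equiv 0$ on $[-1,\tfrac1{\sqrt5}]$ and $\mathcal{L}\equiv q$ on $[\tfrac1{\sqrt5},1]$. A polynomial lying below $\mathcal{L}$ and vanishing at the interior point $-\tfrac1{\sqrt5}$ of $[-1,\tfrac1{\sqrt5}]$ must have a double zero there; combined with the required zeros at $-1$ and $\tfrac1{\sqrt5}$ and the value $h(1)=q(1)=8\tau^2$, this forces the degree-$4$ form
$$h(t)=c\,(t+1)\Bigl(t+\tfrac1{\sqrt5}\Bigr)^{2}\Bigl(t-\tfrac1{\sqrt5}\Bigr),\qquad c=\frac{5\sqrt5\,\tau^2}{\sqrt5+1}>0 .$$
The inequality $h\le\mathcal{L}$ on $[-1,\tfrac1{\sqrt5}]$ is immediate from the signs of the factors. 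On $[\tfrac1{\sqrt5},1]$ one divides $q-h$ by the common factor $1+t$ and uses $\tau^2(1-\tfrac1{\sqrt5})=2$ to obtain $q(t)-h(t)=(1+t)(t-\tfrac1{\sqrt5})(t-1)B_1(t)$ with $B_1$ affine of negative leading coefficient; evaluating this identity at $t=-\tfrac1{\sqrt5}$ (where $h=0$ while $q<0$) gives $B_1(-\tfrac1{\sqrt5})<0$, hence $B_1<0$ on all of $[\tfrac1{\sqrt5},1]$, and therefore $q-h\ge0$ there.

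It remains to verify that $h$ is positive definite on $\S^2$, i.e.\ that the coefficients of its expansion in the Legendre polynomials $P_0,\dots,P_4$ (the kernels $C_n^{(0,0)}$ for $\S^2$) are nonnegative for $n\ge1$. Expanding $h$ and collecting terms yields $\widehat h_4=\tfrac{8}{35}c$, $\widehat h_3=\widehat h_1=\tfrac25 c\bigl(1+\tfrac1{\sqrt5}\bigr)$, and $\widehat h_2=\bigl(\tfrac47+\tfrac23(\tfrac1{\sqrt5}-\tfrac15)\bigr)c$, all strictly positive, which completes the proof. The main obstacle is exactly this last package of conditions: one must produce a single low-degree polynomial that simultaneously hugs the piecewise kernel $\mathcal{L}$ from below and is positive definite, and the fact that the degree-$4$ interpolant of the four prescribed data points does this hinges on the specific value of $\tau$ — the same value that makes $\mathcal{L}$ vanish at the off-diagonal inner product $\tfrac1{\sqrt5}$. (Proposition~\ref{p:tau1} is handled by the same scheme but more simply: the ``active'' quadratic $q(t)=8t^2+8t$ already satisfies $q\le\mathcal{L}$ with equality on $[0,1]$ and is itself positive definite on $\S^2$, while the cross-polytope is a spherical $3$-design whose inner products $\{-1,0,1\}$ are precisely the zeros of $q$ together with $1$.)
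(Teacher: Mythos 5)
Your proof is correct and follows essentially the same route as the paper: your $h(t)=c\,(t+1)(t+\tfrac1{\sqrt5})^{2}(t-\tfrac1{\sqrt5})$ is exactly the paper's auxiliary polynomial $H$ rescaled by $\mathcal L(1)$ (your Legendre coefficients $\tfrac{8}{35}c$, $\tfrac25 c(1+\tfrac1{\sqrt5})$, $(\tfrac47+\tfrac23(\tfrac1{\sqrt5}-\tfrac15))c$ match the paper's $\tfrac{5-\sqrt5}{28}$, $\tfrac14$, $\tfrac{20+3\sqrt5}{84}$ after normalization), and the closing chain of inequalities via the $5$-design property and positive definiteness is identical. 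Your sign analysis of $q-h$ on $[\tfrac1{\sqrt5},1]$ via the affine cofactor $B_1$ is a slightly cleaner presentation of the same factored-difference argument the paper sketches.
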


The proof of Propositions \ref{p:tau1} and \ref{p:tau2} is another application of the linear programming framework, which in this case is particularly straightforward, since, unlike the previous sections,  a single auxiliary function must be constructed to certify the solution. We postpone the details to the Appendix, see Section~\ref{ap:causal}.

\section{Further remarks}
\label{sec:remarks}
We have many remaining questions about the $p$-frame energies, and many curiosities were brought to our attention through our numerical study. One immediate question concerns uniqueness of the $600$-cell as a minimizer for $\RP^3$ and $p\in(8,10)$, which we expect to hold. In Section \ref{sec:uniq}, we mentioned that tight designs, generally, are not unique (not even up to unitary equivalence). This is known to be true in particular for SIC-POVM's in $\C^3$ through the characterization of all SIC-POVM's in this dimension in \cite{szollosi}. It is interesting whether it is more often the case that infinite families arise or that such configurations are isolated, as is known to be the case when $d=2$ \cite{zau99}.

An interesting observation is that some configurations minimize $p$-frame energies for a range of $p$ (the $600$-cell for example), while others, like the $p=3$ minimizer in $\RP^{7}$, do not minimize on an entire range between even integers. When minimizers have the same support for a range $p\in(2k-2,2k)$, it indicates that the supporting configuration has to be a weighted $k$-design.

For the $36$ points in $\RP^7$ given as the midpoints of edges of a regular simplex, one can check that the strength of this configuration as a design is too small to satisfy the above condition. Further, the value of the energy for a measure which equally distributes over this set when compared against the surface measure is too large to be a minimizer when $p$ is close to (but less) than four.

We do not expect that the minimizers of $ p $-frame energies are necessarily weighted $k$-designs, but noticed that many of the configurations which showed up numerically as limit points of the even $p$ values (from below) were smallest known weighted designs. Informally, one might expect for these configurations to have isolated or small support since if the points become too well distributed, the distribution gets ``closer'' to surface measure which means the averages of the configuration over the negative coefficient terms in the Jacobi expansion of $f$ vanish. Since one wants to maximize such contributions, the vectors might be taken close to a weighted $k$-design, but just ``barely'' so.

Some other cases where the support of a minimizer appears to change within even arguments are for $\RP^2$, $p\in(4,6)$ and $\RP^3$, $p\in(2,4)$. One might be tempted to suggest that the configurations which show up as minimizers on an interval are universally optimal, but this is not the case. For example, the $D_4$ root system, which appears to be optimal on $p\in(4,6)$ for $\S^{3}$, is not universally optimal \cite{ccek07}. Nonetheless, in the limited numerical experiments which were run outside of the parameters found in Tables \ref{table:real} and \ref{table:complex}, it appears uncommon that a configuration be optimal on a range of $p$, and when it does happen, the configuration is highly structured.

This suggests another phenomenon similar to the notion of universal optimality, and we are tempted to conjecture that in the real case for $d>2$ there are only finitely  many configurations which optimize the $p$-frame energy on a whole range of $p\in [2k-2,2k]$.

As was mentioned earlier, we conjecture that all energies with $p$ not an even integer have discrete and only discrete minimizers. It is known already that $p$-frame energies among other non-positive definite functions cannot contain an open set in the interior of the support of a minimizing measure \cite{bilyk2019energy}. We are not familiar with an argument which  would rule out the possibility of an arc of a circle being contained in the support of a minimizing measure for $p$-frame potentials, or non-positive definite truncations of their Jacobi polynomial expansions.

Looking at the tables, one can note that as the value of $p$ increases, for $p$ not even, the support size of a candidate appears to be monotonically increasing. Further, for a fixed dimension, the support size seems to grow polynomially in $p$. We do not have an explanation for this phenomenon.

One motivating reason for considering the other projective spaces beyond the real case, is the connection with the problem known as Zauner's conjecture, on existence of tight projective $2$-designs, these being best known by their alternative name SIC-POVMs. The moment constrained problem considered in Section~\ref{sec:other_kernels} for $k=2$ has the property that a discrete solution with support size bounded by an explicit function of $d$ exists regardless of whether a tight design exists. Further, the minimizer must be a (weighted) projective $2$-design. If a SIC-POVM exists, it must solve this problem or the $p$-frame energy problem for $p\in(2,4)$. 

Interestingly, it is conjectured \cite{CKM} from numerical evidence that the property analogous to Zauner's should not hold in the quaternionic setting. If this is true, it is curious what instead should appear as a minimizer.

Finally, we give additional details on how we made the conjectures found in Tables \ref{table:real} and \ref{table:complex}. The numerical method employed to find conjectured minimizers involved two steps. Early on, we used a quick first order gradient descent method to minimize energies. Afterwards we implemented an arbitrary precision library with a second order method to check our conjectures and test endpoint behavior. 

\section{Acknowledgements}

We express our gratitude to the  following organizations that  hosted subsets of the authors during the work on this paper: AIM, ICERM, INI, CIEM, Georgia Tech. The authors were supported by the following grants  DMS-1665007, DMS-2054606 (DB),  DMS-2054536 (AG), the Graduate Fellowship 00039202 (RM),  and  in part by the grant DMS-1600693 and Tripods grant CCF-1934904 (JP), all from the US National Science Foundation, as well as the Simons collaboration grant for mathematicians (DB) and the AMS-Simons travel grant (OV). This work was also supported by EPSRC grant no EP/K032208/1 (the authors' visit to INI) and NSF DMS-1439786 (research in groups at ICERM).

We are extremely grateful to Galyna Livshyts for bringing to our attention the alternative interpretation of the $p$-frame energy, as given in Section~\ref{sec:mixed_volume}.  We thank Greg Blekherman, Henry Cohn, David de Laat, Michael Lacey, Bruce Reznick, Alexander Reznikov, Daniela Schiefeneder, and Yao Yao for helpful discussions.

\section{Appendix}
\begingroup
\fontsize{9pt}{12pt}\selectfont
\subsection{Parameters of the conjectured and rigorous optimizing configurations} 
Table \ref{table:opt_parameters} gives weights and inner products of the support vectors of $ p $-frame minimizing measures in $ \R^d $, see Table~\ref{table:real} and Section~\ref{subsec:other_designs}.

\begin{table}[h!]
    \caption{Optimal and conjectured configurations for $p$-frame energies. The configurations are supported on $N$ unit norm vectors in $\R^d$, and are strength $M$ designs.   Only the absolute values of inner products are given.\\  
    {\footnotesize Note: $\alpha=\frac{\sqrt{75+30+\sqrt{5}}}{15},\beta=\sqrt{\frac{1}{15}(5-2\sqrt{5})},\text{ and } \gamma=\frac{\sqrt{6-2\sqrt{5}}}{6}$.}}
    \label{table:opt_parameters}
    \begin{tabular}{cccccc} 
        $d$ & $N$ & $M$& Weights & Inner Products & Name \\ \hline
        \rule{0pt}{3ex}$2$ & $N$ & $N-1$ & ${1}/{N}$ & $|\cos (2\pi j/N)|, 1\leq j < N $  & $2N$-gon\\ 
        $d$ & $d$ & $1$ & ${1}/{N}$ & $0$ & cross polytope\\ 
        $3$ & $6$ & $2$ & ${1}/{N}$ & $\frac{1}{\sqrt{5}}$ & icosahedron\\ 
        $3$ & $11$ & $3$ & $\frac{1}{10},\frac{2}{27},\frac{49}{540}$ & $0,\frac{1}{7},\frac{4}{7},\frac{5}{7},\sqrt{\frac{1}{7}},\sqrt{\frac{3}{7}},\sqrt{\frac{4}{7}}$ & Reznick weighted design\\ 
        $3$ & $16$ & $4$ & $\frac{5}{84},\frac{9}{140}$ & $\frac{1}{3},\frac{1}{\sqrt{5}},\sqrt{\frac{5}{9}},\alpha,\beta$ & icosahedron and dodecahedron\\ 
        $4$ & $11$ & $2$ & $\frac{1}{10},\frac{3}{32},\frac{3}{40}$ & $\frac{1}{3},\frac{\sqrt{2}}{3},\frac{\sqrt{6}}{6},\frac{\sqrt{5}+1}{6},\gamma$ & weighted design\\ 
        $4$ & $24$ & $3$ & ${1}/{N}$ & $0,{1}/{2},{1}/{\sqrt{2}}$ & $D_4$ roots\\ 
        $4$ & $60$ & $5$ & $ {1}/{N}$ & $0,\frac{\sqrt{5}\pm 1}{4},\frac{1}{2}$ & $600$-cell\\ 
        $5$ & $16$ & $2$ & $\frac{5}{84},\frac{9}{140}$ & $\frac{1}{5},\frac{1}{3},\frac{1}{\sqrt{5}}$ & hemicube\\ 
        $5$ & $41$ & $3$ & $\frac{25}{1008},\frac{8}{315},\frac{2}{105}$ & $0,\frac{1}{5},\frac{1}{2},\frac{3}{5},\frac{1}{\sqrt{2}},\frac{1}{\sqrt{5}},\sqrt{\frac{2}{5}}$ & Stroud weighted design\\ 
        $6$ & $22$ & $2$ & $\frac{3}{64},\frac{1}{24}$ & $0,\frac{1}{3},\frac{1}{\sqrt{6}}$ & cross polytope and hemicube \\ 
        $6$ & $63$ & $3$ & $\frac{2}{135},\frac{1}{60}$ & $0,\frac{1}{4},\frac{1}{2},\sqrt{\frac{3}{8}}$ & $E_6$ and $E_6^{*}$ roots\\ 
        $7$ & $28$ & $2$ & ${1}/{N}$ & $1/3$ & kissing $ E_8 $\\ 
        $7$ & $91$ & $3$ & $\frac{3}{308},\frac{8}{693}$ & $0,\frac{1}{27},\frac{1}{8},\frac{\sqrt{3}}{9}$ & $E_7$ and $E_7^{*}$ roots\\ 
        $8$ & $36$ & $1$ & ${1}/{N}$ & $ {5}/{14},{2}/{7}$ & mid-edges of regular simplex\\ 
        $8$ & $120$ & $3$ & ${1}/{N}$ & $0,1/2$ & $E_8$ roots\\ 
        $23$ & $276$ & $2$ & ${1}/{N}$ & $1/5$ & equiangular\\ 
        $23$ & $2300$ & $3$ & ${1}/{N}$ & $0,1/3$ & kissing $\Lambda_{24}$\\ 
        $24$ & $98280$ & $5$ & ${1}/{N}$ & $0,1/4,1/2$ &  $\Lambda_{24}$ minimal vectors\\ 
    \end{tabular} 
\end{table} 

\subsection{Numerical LP bounds}
Table~\ref{table:numlp} collects linear programming lower bounds corresponding to small values of $d$ and odd values $p$ for the $p$-frame energy on $\S^{d-1}$.
\begin{center}
\begin{table}[h]
\caption{Numeric linear programming lower bounds for odd-valued $p$-frame energies.}
\begin{tabular}{cccc}
\label{table:numlp}
$d $ & $ p=3 $  & $ p=5 $ &  $p=7$ \\  \hline
$3 $ & $ 0.2412 $ & $ 0.1655 $ & $ 0.1248 $ \\
$4 $ & $ 0.1612 $ & $ 0.09607 $ & $ 0.06454 $ \\
$5 $ & $ 0.1170 $ & $ 0.06169 $ & $ 0.03740 $ \\
$6 $ & $ 0.08970 $ & $ 0.04240 $ & $ 0.02344 $ \\
$7 $ & $ 0.07142 $ & $ 0.03060 $ & $ 0.01556 $ \\
$8 $ & $ 0.05852 $ & $ 0.02291 $ & $ 0.01080 $ \\
$9 $ & $ 0.04902 $ & $ 0.01770 $ & $ 0.007768 $ \\
$10 $ & $ 0.04180 $ & $ 0.01401 $ & $ 0.005750 $ \\
$11 $ & $ 0.03616 $ & $ 0.01131 $ & $ 0.004360 $ \\
$12 $ & $ 0.03166 $ & $ 0.009290 $ & $ 0.003375 $ \\
$13 $ & $ 0.02801 $ & $ 0.007737 $ & $ 0.002658 $ \\
$14 $ & $ 0.02499 $ & $ 0.006524 $ & $ 0.002125 $ \\
$15 $ & $ 0.02248 $ & $ 0.005561 $ & $ 0.001721 $ \\ 
$16 $ & $ 0.02035 $ & $ 0.004785 $ & $ 0.001413 $ \\ 
$17 $ & $ 0.01853 $ & $ 0.004152 $ & $ 0.001171 $ \\
$18 $ & $ 0.01696 $ & $ 0.003630 $ & $ 0.0009813 $ \\ 
$19 $ & $ 0.01559 $ & $ 0.003195 $ & $ 0.0008280 $ \\
$20 $ & $ 0.01440 $ & $ 0.002830 $ & $ 0.0007054 $ \\
$21 $ & $ 0.01335 $ & $ 0.002520 $ & $ 0.0006047 $ \\
$22 $ & $ 0.01242 $ & $ 0.002256 $ & $ 0.0005217 $ \\ 
$23 $ & $ 0.01159 $ & $ 0.002028 $ & $ 0.0004529 $ \\
$24 $ & $ 0.01085 $ & $ 0.001832 $ & $ 0.0003952 $ \\

\end{tabular}
\end{table}
\end{center}

\vspace{-5mm}

            \subsection{Causal variational principle}\label{ap:causal}

            \subsubsection{Cross-polytope}

            Let the following polynomial be given, $$ H(t) = 8 t^2 + 8 t.$$
            It is easy to see that $H$ is positive definite on $\mathbb S^2$. Additionally, it is obvious that  
            $$ H(t) \le \mathcal L (t) \qquad \textup{ for all  } t \in [-1,1],$$
            and 
            $$ H(-1) = \mathcal L (-1) = 0 , \quad H(0) = \mathcal L (0) = 0, \quad H(1)  = \mathcal L (1) = 16, $$ 
            so that $H$ coincides with $\mathcal L$ on the set $\{ x\cdot y: x,y \in \supp \nu \}$. 

            We obtain that for any measure $\mu \in \mathcal P$,
            \begin{equation}
                I_{\mathcal L} (\mu ) \ge I_H (\mu ) \ge I_H (\sigma ) = I_H (\nu) = I_{\mathcal L} (\nu),
            \end{equation}
            where we have used the fact that $ H(t) \le \mathcal L (t)  $ for $t\in [-1,1]$, so $I_{\mathcal L} (\mu ) \ge I_H (\mu )$.  Since $H$ is positive definite, according to Propositions \ref{prop:schon} and \ref{prop:pdmin}, we have that  $\sigma$ minimizes $I_H$, i.e. $I_H (\mu ) \ge I_H (\sigma)$. We have also used that the cross-polytope is a $3$-design and $H$ is a quadratic polynomial, hence $I_H (\sigma) = I_H (\nu)$. Finally, $H(t) = \mathcal L (t)$ for $t \in \{ x\cdot y: x,y \in \supp \nu \} = \{ 0, \pm 1\}$, hence $ I_H (\nu) = I_{\mathcal L} (\nu)$. This proves that the cross-polytope minimizes $I_{\mathcal L}$ for $\tau = \sqrt{2}$. 

            \subsubsection{Icosahedron}

            We shall need two facts about the icosahedron, namely that the set of inner products between elements of $\mathcal C$ is  $ \{ x\cdot y: x,y \in \mathcal C \} = \left\{ \pm 1, \pm 1/{\sqrt{5}} \right\}$, and that the icosahedron $\mathcal C$ is a $5$-design.  For simplicity let us consider the function $F(t) =  \frac{\mathcal L (t)}{\mathcal L (1)}$ so that $F(1) =1$ (obviously,  this does not effect the minimizers).

            We construct the following polynomial:
            \begin{align*}
                H(t) & = \frac{5  (5-\sqrt{5} )  }{32}  t^4   +  \frac58 t^3  +  \frac{3\sqrt{5} - 5}{16 }  t^2   -   \frac18  t + \frac{1-\sqrt{5}}{32} \\
                     & = \frac{5-\sqrt{5}}{28}   C_4 (t)  +  \frac14  C_3 (t)  +  \frac{20+3\sqrt{5}}{84}  C_2 (t)  +  \frac14  C_1 (t) +  \frac1{12}  C_0 (t), 
            \end{align*}
            where $C_k$ are the standard Legendre polynomials. We observe then that $H$ is positive definite, and that $H(t) \le F(t)$ for $-1\le t \le 1$, which follows from the formula $$H(t)=\frac{5}{32}(5-\sqrt{5})(t+1)(t-\frac{1}{\sqrt{5}})(t+\frac{1}{\sqrt{5}}).$$ A glance at this formula gives $H\leq F$ for $t\in[-1,\frac{1}{\sqrt{5}}]$, and the fact that $F-H$ is a polynomial with roots $$t=-1,\frac{1}{\sqrt{5}},\frac{-1\pm4\sqrt{10+4\sqrt{5}}}{\sqrt{5}},$$ gives $H\leq F$ for $t\in[\frac{1}{\sqrt{5}},1]$ which is a subset of the interval $[\frac{1}{\sqrt{5}},\frac{-1+4\sqrt{10+4\sqrt{5}}}{\sqrt{5}}]$. 

            $H$ coincides with $F$ (by construction) on the set $ \{ \langle x,y \rangle:  x,y \in \mathcal C \} = \left\{ \pm 1, \pm 1{\sqrt{5}} \right\}$. $H$ has a local maximum at $-\frac{1}{\sqrt{5}}$ and has been obtained by solving the linear equations  $H(t) = F(t) $ for $t =  \pm 1, \pm 1/{\sqrt{5}}$, as well as $H' (-1/\sqrt{5}) = 0$. The same argument as in the previous subsection finally shows
            $$\displaystyle{I_F (\nu) = \inf_{\mu \in \mathcal P}  I_F (\mu)},$$
            i.e.\ the icosahedron minimizes the energy $I_{\mathcal L}$ for $\tau^2 = \frac{2\sqrt{5}}{\sqrt{5} -1} $.
            \endgroup


\begin{thebibliography}{10}

\bibitem[A]{Andreev1999}
N.~N.~Andreev.
\newblock {\em A spherical code.}
\newblock  Russian Mathematical Surveys {\bf 54} (1999), 251--253. 
\MR{1706807} 

\bibitem[A1]{Andreev2000}
 N.~N.~Andreev.
 \newblock {\em A minimal design of order $11$ on the three-dimensional sphere.}
 \newblock (Russian) Mat. Zametki {\bf 67} (2000),  489--497; 
 \newblock Translation in Math. Notes {\bf 67} (2000),  417--424.
\MR{1769895}

\bibitem[ABBEGL]{AppBenBriEriGraLar14}
D.M.~Appleby, I.~Bengtsson, S.~Brierley, A.~Ericsson, M.~Grassl, and J.A.~Larsson
\newblock {\em Systems of Imprimitivity for the Clifford Group}
\newblock Quantum Information \& Computation {\bf 14}(3-4) (2014), 339-360.

\bibitem[B]{baez2002oct}
J.~C. Baez.
\newblock {\em The octonions.}
\newblock  Bull. Amer. Math. Soc. {\bf 39} (2001), 145--206, 
\MR{1886087};
see errata in Bull. Amer. Math. Soc. {\bf 42} (2005), 213. 
\arXiv{math/0105155}
\MR{2132837}

\bibitem[BCL+]{balague2013nonlocal}
D.~Balagu{\'e}, J.~Carrillo, T.~Laurent, and G.~Raoul.
\newblock {\em Nonlocal interactions by repulsive--attractive potentials: radial ins/stability.}
\newblock Phys. D { \bf 260} (2013), 5--25. 
\arXiv{1109.5258}
\MR{3143991}

\bibitem[BD1]{bannaidam79}
E.~Bannai and R.~Damerell.
\newblock {\em Tight spherical designs I.}
\newblock J. Math. Soc. Japan {\bf 31} (1979), 199--207.
\MR{0519045}

\bibitem[BD2]{bannaidam80}
E.~Bannai and R.~Damerell.
\newblock {\em Tight spherical designs II.}
\newblock J. London Math. Soc. {\bf 21} (1980), 13--30.
\MR{0576179}

\bibitem[BH]{bannaihogg89}
E.~Bannai and S.~Hoggar.
\newblock {\em Tight $t$-designs and squarefree integers.}
\newblock European J. Combin. {\bf 10} (1989), 113--135.
\MR{0988506}

\bibitem[BMV]{bannaimumvenk}
E.~Bannai, A.~Munemasa, and B.~Venkov.
\newblock {\em The nonexistence of certain tight spherical designs}.
\newblock Algebra i Analiz {\bf 16} (2004), 1--23.
\MR{2090848}

\bibitem[BFSM]{BFSM}
L.~B\"{a}uml, F.~Finster, D.~Schiefeneder, and H.~von der Mosel
\newblock {\em Singular Support of Minimizers of the Causal Variational Principle on the Sphere.}
\newblock Calculus of Variations and Partial Differential Equations {\bf 58}(205) (2019).

\bibitem[BeF]{benedetto2003finite}
J.~J. Benedetto and M.~Fickus.
\newblock {\em  Finite normalized tight frames.}
\newblock  Adv. Comput. Math {\bf 18} (2003), 357--385. 
\MR{1968126}

\bibitem[BiD]{bilyk2018geodesic}
D.~Bilyk and F.~Dai.
\newblock {\em Geodesic distance riesz energy on the sphere}.
\newblock  Trans. Amer. Math. Soc. {\bf 372}(5) (2019), 3141–3166.
\arXiv{1612.08442}
\MR{3988605}

\bibitem[BGM+]{bilyk2019energy}
D.~Bilyk, A.~Glazyrin, R.~Matzke, J.~Park, and O.~Vlasiuk.
\newblock {\em Energy on spheres and discreteness of minimizing measures}.
\newblock J. Funct. Anal. {\bf 280}(11) (2021),
\arXiv{1908.10354}
\MR{4233399}

\bibitem[BGM+1]{BGMPV1}
D.~Bilyk, A.~Glazyrin, R.~Matzke, J.~Park, and O.~Vlasiuk.
\newblock {\em Experimental survey of  discrete minimizers of the $p$-frame energy}.
\newblock to appear (2021).




\bibitem[Bj]{Bjorck1956}
G. Bj\"orck.
\newblock {\em Distributions of positive mass, which maximize a certain generalized energy integral}.
\newblock Ark. f{\"{o}}r Mat. {\bf 3} (1956), 255--269.
\MR{0078470}

\bibitem[Bo]{bochner}
S.~Bochner.
\newblock {\em Hilbert distances and positive definite functions}.
\newblock Ann. of Math. {\bf 42} (1941), 647--656.
\MR{0005782}


\bibitem[BL]{bourgain}
J.~Bourgain and J.~Lindenstrauss.
\newblock {\em Projection Bodies.}
\newblock Geometric aspects of functional analysis, Lecture Notes in Math. {\bf 1317} (1988), Springer, Berlin, 250--270.
\MR{950986}

\bibitem[CFP]{carrillo2017geometry}
J.~Carrillo, A.~Figalli, and F.~S. Patacchini.
\newblock {\em Geometry of minimizers for the interaction energy with mildly repulsive potentials.}
\newblock Ann. Inst. H. Poincar\'e Anal. Non Lin\'eaire {\bf 34} (2017), 1299--1308. 
\arXiv{1607.08660}
\MR{3742525}

\bibitem[CMV]{carrillo2003kinetic}
J.~A. Carrillo, R.~J. McCann, and C.~Villani.
\newblock {\em Kinetic equilibration rates for granular media and related equations: entropy dissipation and mass transportation estimates.}
\newblock Rev. Mat. Iberoamericana {\bf 19} (2003), 971--1018. 
\MR{2053570}

\bibitem[CGGKO]{CGGKO}
 X. Chen, V. Gonzales, E. Goodman,  S. Kang, and K.~A. Okoudjou.
\newblock {\em Universal optimal configurations for the p-frame potentials.}  
\newblock Adv. Comput. Math.  {\bf{46}}  (2020), 4.

\bibitem[CCE+]{ccek07}
H.~Cohn, J.~Conway, N.~Elkies, and A.~Kumar. 
\newblock {\em The $D_4$ root system is not universally optimal.} 
\newblock Experimental Mathematics {\bf 16} (2007), 313--320.
\arXiv{math/0607447}
\MR{2367321}


\bibitem[CK]{cohnUni2007}
H.~Cohn and A.~Kumar.
\newblock {\em Universally optimal distribution of points on spheres.}
\newblock J. Amer. Math. Soc. {\bf 20} (2007), 99--149. 
\arXiv{math/0607446}
\MR{2257398}


\bibitem[CKM]{CKM}
 H.~Cohn, A.~Kumar, and G.~Minton.
 \newblock {\em Optimal simplices and codes in projective spaces. }
 \newblock Geom. Topol. {\bf 20} (2016),  1289--1357.
 \arXiv{1308.3188}
 \MR{3523059}

\bibitem[DG]{damelin2003energy}
S.~B. Damelin and P.~J. Grabner.
\newblock {\em Energy functionals, numerical integration and asymptotic
  equidistribution on the sphere.}
\newblock J. Complexity {\bf 19} (2003), 231--246. 
\MR{1984111}

\bibitem[De]{delsarte1973algebraic}
P.~Delsarte.
\newblock {\em An algebraic approach to the association schemes of coding theory.}
\newblock Philips Res. Rep. Suppl. {\bf 10} (1973).
\MR{0384310}

\bibitem[DGS]{delsarteSpherical1977}
P.~Delsarte, J.~M. Goethals, and J.~J. Seidel.
\newblock {\em Spherical codes and designs.}
\newblock Geometriae Dedicata {\bf 6} (1977), 363--388.
\MR{0485471}

\bibitem[DL]{devoreConstructive1993}
R.~A. DeVore and G.~G. Lorentz.
\newblock {\em Constructive {{Approximation}}}.
\newblock Springer-Verlag, Berlin, (1993).
\MR{1261635}

\bibitem[EO]{Ehler2012}
M.~Ehler and K.~A. Okoudjou.
\newblock {\em Minimization of the probabilistic p-frame potential}.
\newblock  J. Statist. Plann. Inference {\bf 142} (2012), 645--659.
\arXiv{1101.0140}
\MR{2853573}

\bibitem[FS]{finsterSupport2013}
F.~Finster and D.~Schiefeneder.
\newblock {\em On the {{Support}} of {{Minimizers}} of {{Causal Variational Principles}}}.
\newblock Arch. Ration. Mech. Anal. {\bf 210} (2013), 321--364.
\arXiv{1012.1589}
\MR{2719559}

\bibitem[Fr]{freudenthal}
H.~Freudenthal.
\newblock {\em Zur ebenen Oktavengeometrie}.
\newblock Nederl. Akad. Wetensch. Proc. Ser. A. {\bf 56} (1953), 195--200.
\MR{0056306}

\bibitem[G]{gangolli}
R.~Gangolli.
\newblock {\em Positive definite kernels on homogeneous spaces and certain stochastic processes related to {L}\'{e}vy's {B}rownian motion of several parameters}.
\newblock Ann. Inst. H. Poincar\'{e} Sect. B (N.S.) {\bf 3} (1967), 121--226.
\MR{0215331}

\bibitem[Gl]{glazyrin2019moments}
A.~Glazyrin.
\newblock {\em Moments of isotropic measures and optimal projective codes}.
\newblock preprint.
\arXiv{1904.11159}.

\bibitem[H1]{hoggar1982t}
S.~G. Hoggar.
\newblock {\em t-designs in projective spaces.}
\newblock European J. Combin. {\bf 3} (1982), 233--254.
\MR{0679208}

\bibitem[H2]{hogg89}
S.~G.~Hoggar.
\newblock {\em Tight $4$- and $5$-designs in projective spaces.}
\newblock Graphs Combin. {\bf 5} (1989), 87--94.
\MR{0981234}

\bibitem[HW]{hughesSpherical}
D.~Hughes and S.~Waldron.
\newblock {\em Spherical (t,t)-designs with a small number of vectors.}
\newblock Published electronically at \url{https://math.auckland.ac.nz/~waldron/Preprints/Numerical-t-designs/numerical-t-designs.html}.

\bibitem[MPFI]{InriaForge}
{{InriaForge}}: {{MPFI}}.
\newblock \url{https://gforge.inria.fr/projects/mpfi/}.

\bibitem[K]{koldobsky}
A.~Koldobsky.
\newblock {\em Fourier analysis in convex geometry.}
\newblock Mathematical Surveys and Monographs {\bf 116}, American Mathematical Society, Providence, RI, (2005).
\MR{2132704} 

\bibitem[KSU+]{kolokolnikov2011stability}
T.~Kolokolnikov, H.~Sun, D.~Uminsky, and A.~L. Bertozzi.
\newblock {\em Stability of ring patterns arising from two-dimensional particle interactions}.
\newblock Phys. E {\bf 84} (2011).

\bibitem[KY1]{kolushov1994}
A.~V. {Kolushov} and V.~A. {Yudin}.
\newblock {\em On the Korkin-Zolotarev construction.}
\newblock Discrete Math. Appl. {\bf 4} (1994), 143--146.
\MR{1273240}

\bibitem[KY2]{kolushovExtremal1997}
A.~V. Kolushov and V.~A. Yudin.
\newblock {\em Extremal dispositions of points on the sphere.}
\newblock Anal. Math. {\bf 23} (1997), 25--34.
\MR{1630001}

\bibitem[LT]{lehrer2009unitary}
G.~Lehrer and D.~Taylor.
\newblock {\em Unitary Reflection Groups}.
\newblock Australian Mathematical Society Lecture Series {\bf 20}, Cambridge University Press, Cambridge, (2009).
\MR{2542964} 

\bibitem[LS]{LemSei73}
P.W.H.~Lemmens and J.J.~Seidel.
\newblock {\em Equiangular Lines}.
\newblock Journal of Algebra {\bf 24}, 494-512 (1973).

\bibitem[Le1]{levenshteinDesigns1992}
V.~I. Levenshtein.
\newblock {\em Designs as maximum codes in polynomial metric spaces}.
\newblock Acta Appl. Math. {\bf 29} (1992), 1--82.
\MR{1192833}

\bibitem[Le2]{levenshteinUniversal1998}
V.~I. Levenshtein.
\newblock {\em Universal {{Bounds}} for {{Codes}} and {{Designs}}}.
\newblock Handbook of coding theory, Vol. I, II, North-Holland, Amsterdam, (1998), 499--648.
\MR{1667942}

\bibitem[LZ]{Lutwak2}
E.~Lutwak and G.~Zhang.
\newblock {\em Blaschke-Santal\'{o} inequalities.} 
\newblock J. Differential Geom. {\bf 47}(1) (1997), 1--16.
\MR{1601426}


\bibitem[LYZ]{Lutwak}
E.~Lutwak, D.~Yang, and G. Y. Zhang.
\newblock {\em $L_p$ affine isoperimetric inequalities}.
\newblock J. Differential Geom. {\bf{56}} (2000), 111--132.
\MR{1863023}

\bibitem[L]{Lyubich2009}
Y.~I. Lyubich.
\newblock {\em On tight projective designs.}
\newblock Des. Codes Cryptogr. {\bf 51} (2009), 21--31. 
\arXiv{math/0703526}
\MR{2480685}

\bibitem[LS]{lyubich2013rec}
Y.~I. Lyubich and O.~A. Shatalova.
\newblock {\em A recursive construction of projective cubature formulas and related isometric embeddings.}
\newblock preprint.
\arXiv{1310.4562v2}.

\bibitem[M]{makhnev}
A.~A.~Makhnev.
\newblock {\em On the nonexistence of strongly regular graphs with the parameters {$(486,165,36,66)$}.}
\newblock Ukra\"{\i}n. Mat. Zh. {\bf 54} (2002), 941--949.
\MR{2015515}

\bibitem[Ma]{Matzke}
R. Matzke.
\newblock{\em Problems with a lot of potential: energy optimization on compact spaces.}
\newblock PhD Thesis (2021), 
available at \url{http://www.ryanmatzke.com/pdfs/Ryan_William_Matzke_Dissertation.pdf}

\bibitem[Mi]{mimura}
Y.~Mimura.
\newblock {\em A construction of spherical $2$-designs.}
\newblock Graphs Combin. {\bf 6} (1990), 369--373.
\MR{1092586}

\bibitem[MEB+]{mogilner2003mutual}
A.~Mogilner, L.~Edelstein-Keshet, L.~Bent, and A.~Spiros.
\newblock {\em Mutual interactions, potentials, and individual distance in a social aggregation.}
\newblock J. Math. Biol. {\bf 47} (2003), 353--389.
\MR{2024502}

\bibitem[MW]{MW19}
\newblock M.~Mohammadpour and S.~Waldron.
\newblock {\em Constructing high order spherical designs as a union of two of lower order.}
\newblock preprint.
\arXiv{1912.07151}.


\bibitem[N]{nesterov2000squared}
Y.~Nesterov.
\newblock {\em Squared functional systems and optimization problems.}
\newblock High performance optimization, Appl. Optim. {\bf 33} (2000), 405--440.
\MR{1748764}

\bibitem[NS]{nebesloane}
G.~Nebe and N.~Sloane.
\newblock {\em Lattices.}
\newblock Published at \url{http://www.math.rwth-aachen.de/~Gabriele.Nebe/LATTICES/}.

\bibitem[NoS]{Nozaki12}
H.~Nozaki and M.~Sawa.
\newblock {\em Remarks on hilbert identities, isometric embeddings, and invariant cubature.}
\newblock St. Petersburg Math. J. {\bf 25} (2014), 615--646.
\arXiv{1204.1779}
\MR{3184620}

\bibitem[RS]{rains1998shadow}
E.~Rains and N.~J.~A. Sloane.
\newblock {\em The shadow theory of modular and unimodular lattices.}
\newblock J. Number Theory {\bf 73} (1998), 359--389.
\arXiv{math/0207294}
\MR{1657980}

\bibitem[RBSC]{RBSC}
J.~M.~Renes, R.~Blume-Kohout, A.~J.~Scott, and C.~M.~Caves. 
\newblock {\em Symmetric informationally complete quantum measurements.}
\newblock  J. Math. Phys. {\bf 45} (2004), 2171--2180.
\arXiv{quant-ph/0310075}
\MR{2059685}

\bibitem[Rez]{Rez}
B.~Reznick.
\newblock {\em Sums of Even Powers of Real Linear Forms}.
\newblock Mem. Amer. Math. Soc. {\bf 463} (1992).
\MR{1096187}

\bibitem[S]{sageMath}
{{SageMath Mathematical Software System}}.
\newblock \url{http://www.sagemath.org/}.

\bibitem[Sc]{schoenbergPositive1941}
I.~J. Schoenberg.
\newblock {\em Positive definite functions on spheres}.
\newblock  Duke Math. J. {\bf 9} (1941), 96--108.
\MR{0005922}

\bibitem[Sch]{schneider}
R.~Schneider.
\newblock {\em Convex Bodies: the Brunn-Minkowsi theory}.
\newblock Encyclopedia of Mathematics and its Applications {\bf 44}, Cambridge University Press, Cambridge, (1993).
\MR{1216521}

\bibitem[Se]{seidel2graph}
J.~J.~Seidel.
\newblock {\em A survey of two-graphs}.
\newblock in: Colloquio {I}nternazionale sulle {T}eorie {C}ombinatorie
              ({R}ome, 1973), {T}omo {I} (1973), 481–-511.
\MR{0550136}

\bibitem[SG]{SG} 
A.~J. Scott, M.~Grassl. 
\newblock {\em Symmetric informationally complete positive- operator-valued measures: a new computer study}.
\newblock J. Math. Phys. {\bf{51}} (2010).
\arXiv{0910.5784}
\MR{2662471}

\bibitem[Sh]{shatalovIsometric2001}
O.~Shatalov.
\newblock {\em Isometric Embeddings $ l_2^m \to l_p^n $ and Cubature Formulas Over Classical Fields}.
\newblock Ph.D. thesis, Technion--Israel Institute of Technology, (2001).

\bibitem[Si]{sidel1974new}
V.~M. Sidel'nikov.
\newblock {\em New estimates for the closest packing of spheres in $n$-dimensional Euclidean space}.
\newblock  Mat. Sb. {\bf 24} (1974), 148--158.
\MR{0362060}

\bibitem[Sk]{skriganovPoint2017}
M.~M. Skriganov.
\newblock {\em Point distribution in compact metric spaces, {{III}}. {{Two}}-point homogeneous spaces.}
\newblock preprint.
\arXiv{1701.04545}

\bibitem[So]{souvignier1994irreducible}
B.~Souvignier.
\newblock {\em Irreducible finite integral matrix groups of degree 8 and 10.}
\newblock Math. Comp. {\bf 63} (1994), 335--350.
\MR{1213836}


\bibitem[St]{stillwell2001story}
J.~Stillwell.
\newblock {\em The story of the 120-cell}.
\newblock Notices Amer. Math. Soc. {\bf 48} (2001), 17--24.
\MR{1798928}

\bibitem[Str]{stroud1967some}
A.~Stroud.
\newblock {\em Some seventh degree integration formulas for symmetric regions.}
\newblock SIAM J. Numer. Anal. {\bf 4} (1967), 37--44.
\MR{0214282}

\bibitem[Sz]{szego1975orth}
G.~Szeg{\H{o}}.
\newblock {\em Orthogonal Polynomials}.
\newblock American Mathematical Society, Colloquium Publications, Vol. XXIII. American Mathematical Society, Providence, R.I., (1975).
\MR{0372517}

\bibitem[Sz\H o]{szollosi}
F.~Sz{\H{o}}ll{\H{o}}si.
\newblock {\em All complex equiangular tight frames in dimension $3$}.
\newblock preprint.
\arXiv{1402.6429}

\bibitem[T]{Tay95}
M.~Taylor
\newblock {\em Cubature for the Sphere and the Discrete Spherical Harmonic Transform}.
\newblock SIAM Journal on Numerical Analysis {\bf 32}(2) (1995), 667-670.

\bibitem[V]{venkovRes2001}
B.~B. Venkov.
\newblock {\em {R\'{e}seaux euclidiens, designs sph\'{e}riques et formes modulaires: R\'{e}seaux et ``designs'' sph\'{e}riques}}. \newblock  Monogr. Enseign. Math. {\bf 37} (2001), 87--111. 
\MR{1878746}

\bibitem[Vi]{Vil68}
N.J.~Vilenkin
\newblock {\em Special Functions and the Theory of Group Representations}.
\newblock Translations of Mathematics Monographs, {\bf 22}, American Mathematical Society (1968).

\bibitem[VUK+]{von2012predicting}
J.~H. Von~Brecht, D.~Uminsky, T.~Kolokolnikov, and A.~L. Bertozzi.
\newblock {\em Predicting pattern formation in particle interactions}.
\newblock Math. Models Methods Appl. Sci. {\bf 22} (2012).
\MR{2974182}

\bibitem[W]{wangTwoPoint1952}
H.-C. Wang.
\newblock {\em Two-{{Point Homogeneous Spaces}}}.
\newblock Ann. of Math. {\bf 55} (1952), 177--191.
\MR{0047345}

\bibitem[We]{Welch1974}
L.~Welch.
\newblock {\em Lower bounds on the maximum cross correlation of signals.}
\newblock IEEE Trans. Inf. Theor. {\bf 20} (2006), 397--399.

\bibitem[Wo]{wolf2007harmonic}
J.~Wolf.
\newblock {\em Harmonic Analysis on Commutative Spaces}.
\newblock Mathematical Surveys and Monographs {\bf 142}, American Mathematical Soc., Providence, RI, (2007).
\MR{2328043}

\bibitem[WS]{wu2015nonlocal}
L.~Wu and D.~Slep{\v{c}}ev.
\newblock {\em Nonlocal interaction equations in environments with heterogeneities and boundaries.}
\newblock Comm. Partial Differential Equations {\bf 40} (2015), 1241--1281.
\MR{3341204}

\bibitem[Y1]{yudinMin1992}
V.~A. Yudin.
\newblock {\em Minimum potential energy of a point system of charges.}
\newblock Diskret. Mat. {\bf 4} 1992, 115--121.
\MR{1181534}

\bibitem[Y2]{yudinLow1997}
V.~A. Yudin.
\newblock {\em Lower bounds for spherical designs.}
\newblock  Izv. Math. {\bf 61} (1997), 673--683.
\MR{1478566}

\bibitem[Z]{zau99}
G.~Zauner.
\newblock {\em Grundz{\"u}ge einer nichtkommutativen Designtheorie.}
\newblock PhD thesis, University of Vienna, 1999.
\newblock Published in English translation: Int. J. Quantum Inf. {\bf 9} (2011), 445--507.
\MR{2931102}

\bibitem[ZC+]{zimmermannComputational2018}
P.~Zimmermann, A.~Casamayou, N.~Cohen, G.~Connan, T.~Dumont, L.~Fousse,
  F.~Maltey, M.~Meulien, M.~Mezzarobba, C.~Pernet, N.~M. Thi{\'e}ry, E.~Bray, J.~Cremona, M.~Forets, A.~Ghitza, and H.~Thomas.
\newblock {\em Computational {{Mathematics}} with {{SageMath}}}.
\newblock {Society for Industrial and Applied Mathematics}, Philadelphia, PA, Dec. 2018. 
\MR{3909428} 

\end{thebibliography}

\end{document}